\definecolor{yellow1}{rgb}{1,0.8,0.2} 
\theoremstyle{plain}
 \newtheorem{thm}{Theorem}[section]
\newtheorem{lem}[thm]{Lemma}
\newtheorem{pro}[thm]{Proposition}
\newtheorem{rmk}[thm]{Remark}
\newtheorem{defi}[thm]{Definition}
\newcommand{\bess}{\begin{eqnarray*}}
\newcommand{\eess}{\end{eqnarray*}}
\begin{document}

\author{\textsc{Xiaoguang Wang}}
\address{School of Mathematical Sciences, Zhejiang University, Hangzhou, 310027, China}
\email{wxg688@163.com}


   \title[]{Hyperbolic components and cubic polynomials} 

   \begin{abstract} In the space of cubic polynomials, Milnor defined a notable curve $\mathcal S_p$,
   consisting of cubic polynomials with a periodic critical point, whose period is exactly $p$.
  In this paper, we show that for any integer $p\geq 1$, any bounded hyperbolic component on  $\mathcal{S}_p$ 
  is a Jordan disk. 
     \end{abstract}

   \subjclass[2010]{Primary 37F45; Secondary 37F10, 37F15}

   \keywords{hyperbolic component, periodic curve, cubic polynomials}

    \date{\today}

   \maketitle

\section{Main Theorem} \label{intro}

Polynomial maps $f: \mathbb C\rightarrow \mathbb C$, viewed as dynamical systems,
 yield complicated behaviors under iterations.
 Their bifurcations, both in the dynamical plane and in the parameter space,
 are the major attractions in the field of complex dynamics in recent thirty years.

   To start, let $\mathbf P(d)$ be the space of monic and centered polynomials of degree $d\geq 2$.
The connectedness locus $\mathbf C(d)$ consists of  $f\in \mathbf P(d)$ whose Julia set is connected, while the 
shift locus $\mathbf S(d)$ consists of  maps in $\mathbf P(d)$  for which all critical orbits escape to infinity under iterations.
The topology of $\mathbf C(d)$ and $\mathbf S(d)$ attracts lots of people. 
A well-known theorem states that $\mathbf C(d)$ is compact and cellular, this is  proven by Douady-Hubbard \cite{DH1} for $d=2$,  
 Branner-Hubbard \cite{BH1} for $d=3$, and  Lavaurs \cite{L},  DeMarco-Pilgrim \cite{DP1} for the general cases.  
 On the other hand, the shift locus $\mathbf{S}(d)$  is studied from a different viewpoint. 
 It's fundamental group is studied by  Blanchard, Devaney and Keen \cite{BDK},
 while its simplicial structure and bifurcations are  studied  comprehensively  by
 DeMarco and Pilgrim \cite{DP1,DP2,DP3}.
 
 When $d=3$, more amazing structure of the parameter space are revealed.
Pioneering work of Branner and Hubbard \cite{BH1} exhibited two dynamically meaningful solid tori with linking number 3 in the 
parameter space.  Meanwhile, it is observed by Milnor,  and proven by
 Lavaurs \cite{L} 
that $\mathbf C(3)$ is not locally connected. Later, Epstein and Yampolski \cite{EY}  proved the existence of products of the Mandelbrot set in $\mathbf C(3)$. 
 These striking results indicate the complexity of the cubic polynomial space.

In their papers \cite{BH1,BH2}, Branner and Hubbard used the following form
$$f_{c,a}(z)=z^3-3c^2z+a,$$
where $(c,a)\in \mathbb C^2$ is a pair of parameters. 
This critically marked form are widely adopted by the followers. 
Note that $f_{c,a}$ has two critical points $\pm c$.

To study the parameter space of $f_{c,a}$,  
Milnor suggested to study the one dimensional slices, and defined a kind of notable slices  called the {\it critically-periodic curves} $\mathcal S_p$, $p\geq 1$. The curve $\mathcal S_p$ consists of $(c,a)\in \mathbb C^2$ for which  the critical point $c$ has period exactly $p$ under iterations of $f_{c,a}$:
$$\mathcal{S}_p=\{(c, a)\in \mathbb{C}^2; 
f_{c,a}^p(c)=c \text{ and } f^k_{c,a}(c)\neq c,  \forall \ 1 \leq k<p \}.$$
Milnor showed that $\mathcal S_p$ is a smooth affine algebraic curve, and asked whether it is irreducible.
A proof of irreducibility is recently announced  by Arfeux and Kiwi \cite{AK}. 
 
The curve $\mathcal S_p$ has a remarkable topology. 
 It is known that $\mathcal S_1$ is biholomorphic to the $\mathbb C$,  
$\mathcal S_2$ is biholomorphic to the punctured plane $\mathbb C^*$, $\mathcal S_3$ has genus one with $8$ punctures (see Figure 1),  $\mathcal S_4$ has genus 15 with 20 punctures. 
Both the genus $g_p$ and the number $N_p$ of punctures of $\mathcal S_p$ grow exponentially with $p$.
 Bonifant, Kiwi and Milnor \cite{BKM} proved that the Euler characteristic of $\mathcal S_p$ (without assuming its irreducibility) is given by 
$$\chi (\mathcal S_p)=(2-p) d(p),$$
where $d(p)$ is the degree of $\mathcal S_p$, satisfying the formula:
$\sum_{n|p}d(n)=3^{p-1}.$
 The genus $g_p$ and the number  $N_p$ of punctures of $\mathcal S_p$ have no explicit formulas.
 An algorithm to compute $N_p$ (hence also $g_p$) is designed by DeMarco and Schiff \cite{DS}, building on previous work of DeMarco and Pilgrim \cite{DP3}. 
 By pluripotential methods,  Dujardin \cite{Du} showed that 
 $$\lim_{p\rightarrow+\infty}\frac{\chi (\mathcal S_p)+N_p}{3^{p}}\rightarrow -\infty.$$
 Assuming the irreducibility \cite{AK}, the above behavior implies that  the genus $g_p$ actually grows faster than $3^p$.

  \begin{figure}[h]
\begin{center}
\includegraphics[height=5cm]{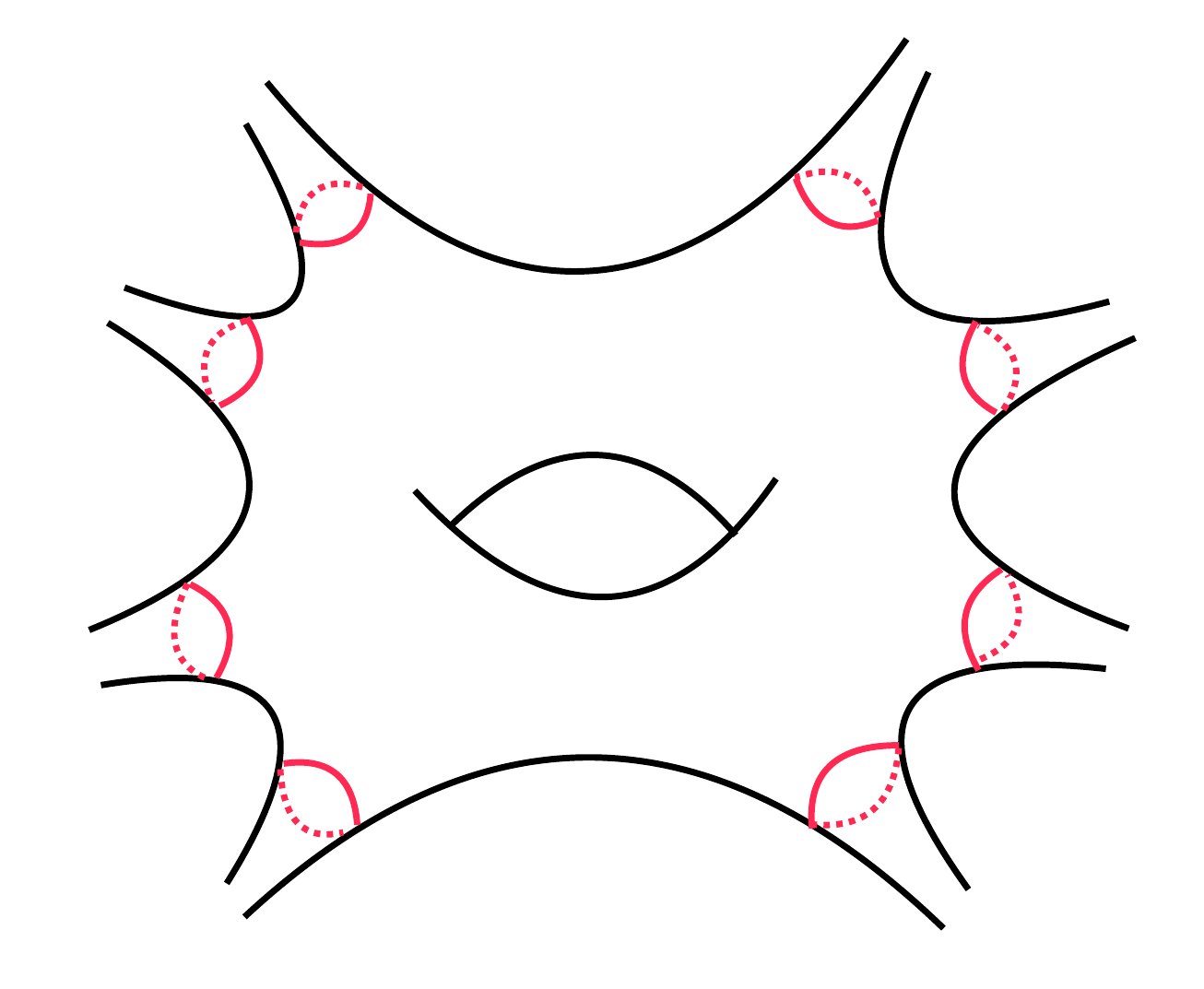} 
 \caption{$\mathcal S_3$ has a non-trivial topology. It is a complex torus with 8 punctures.
 }
\end{center}\label{f5}
\end{figure}

In this article, we study the {\it bifurcations of dynamical systems} on the algebraic curve $\mathcal S_p$, see Figure 2. 
 Precisely,  the bifurcations on the boundary of stable regions are the main focus. Here, `stable' refers to {\it hyperbolic}.
 Recall that, a rational map $f$ is {\it hyperbolic} if all  the critical points are attracted by the attracting cycles.
 In a holomorphic family of rational maps, hyperbolic maps form an open (and conjecturally dense) subset, each component  is called a {\it hyperbolic component}.

 \begin{figure}[!htpb]
  \setlength{\unitlength}{1mm}
  {\centering
  \includegraphics[width=42mm]{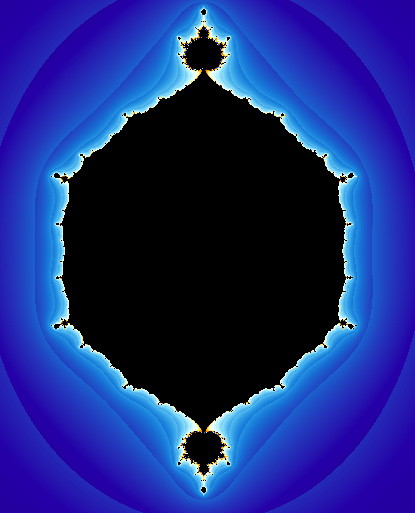}
  \includegraphics[width=72mm]{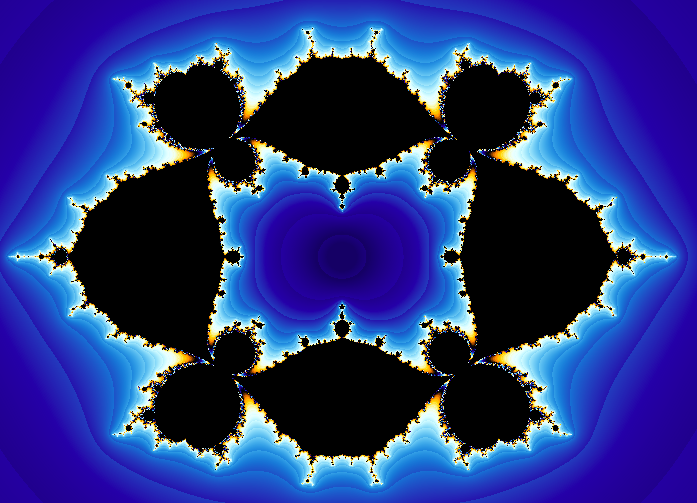}   }
  \caption{Bifurcations on $\mathcal S_1$  (left) and $\mathcal S_2$ (right).} 
\end{figure}

In this paper, we establish the following   
  \begin{thm}   \label{Sp_hyp}  
 For any integer $p\geq1$,   any bounded hyperbolic component on   $\mathcal{S}_p$  
 is a Jordan disk.
\end{thm}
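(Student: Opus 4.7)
The plan is to parameterize each bounded hyperbolic component $H \subset \mathcal{S}_p$ by a natural conformal invariant $\psi_H : H \to \mathbb{D}$ and then show that $\psi_H$ extends to a homeomorphism of closures $\overline{H} \to \overline{\mathbb{D}}$. First I would classify $H$ according to the asymptotic behavior of the free critical point $-c$: either (i) $-c$ is attracted to the same super-attracting cycle that contains $c$ — the adjacent, bitransitive, and capture cases, distinguished by which Fatou component of the cycle eventually contains $-c$ — or (ii) $-c$ is attracted to a distinct attracting cycle (the disjoint case). In case (ii), set $\psi_H(c,a)$ equal to the multiplier of the attracting cycle of $-c$. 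In case (i), using the Böttcher coordinate $\phi_{c,a}$ on the super-attracting Fatou component containing $c$, set $\psi_H(c,a) := \phi_{c,a}\bigl(f_{c,a}^{k}(-c)\bigr)$, where $k \geq 0$ is the least integer such that $f_{c,a}^{k}(-c)$ enters that component.

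The first task is to show that $\psi_H : H \to \mathbb{D}$ is a proper holomorphic map of degree one, hence a biholomorphism. Holomorphic dependence of Böttcher coordinates and multipliers on parameters is classical. Properness uses the boundedness of $H$ inside $\mathcal{S}_p$: any accumulation on $\partial H$ forces a dynamical degeneration of the free critical orbit, driving $|\psi_H| \to 1$. Degree one follows from a quasi-conformal rigidity argument: two parameters in $H$ sharing the value of $\psi_H$ give quasi-conformally conjugate dynamics preserving the marked critical point $c$ together with its exact period $p$, and the discrete conditions $f^{p}(c) = c$ and $f^{k}(c) \neq c$ for $k<p$ defining $\mathcal{S}_p$ then pin the two parameters to one another. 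This already makes $H$ a topological disk; the remaining — and main — work is to control its boundary.

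The main obstacle is showing that $\psi_H^{-1} : \mathbb{D} \to H$ extends continuously and injectively to $\overline{\mathbb{D}}$. For continuous extension at $\zeta \in \partial \mathbb{D}$ I would analyse sub-sequential limits of $\psi_H^{-1}(z_n)$ as $z_n \to \zeta$ and argue they all coincide with a canonical boundary parameter, handling the cases separately: generic $\zeta$ by holomorphic motions and quasi-conformal surgery; root-of-unity $\zeta$ (where the relevant attracting cycle becomes parabolic at the limit) by Douady–Lavaurs-style parabolic perturbation theory; and points at which the free critical orbit becomes post-critically finite in the limit by Thurston rigidity. Injectivity on $\partial \mathbb{D}$, ruling out a pinched boundary of $H$, is the subtlest step. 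I would attach to every boundary parameter a discrete combinatorial invariant — such as the pair of external rays landing at the parabolic or pre-periodic critical point of the limit map — and verify that this invariant varies strictly monotonically as $\zeta$ traces $\partial \mathbb{D}$; together with the continuous extension, monotonicity promotes the boundary map to a homeomorphism $\partial \mathbb{D} \to \partial H$, yielding the Jordan disk conclusion. The adjacent case is likely the technically heaviest, since there both critical points lie in the same Fatou component and all boundary combinatorics must be read off from external-ray dynamics near $-c$.
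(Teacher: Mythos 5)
There are two genuine gaps. First, your claim that $\psi_H$ is a \emph{degree-one} proper map for the adjacent and bitransitive cases is false. By Milnor's parameterization (Theorem \ref{parameterization}), the map $(c,a)\mapsto B_{c,a}(-c)$ on a Type-$A$ (resp.\ Type-$B$) component is a branched cover of degree $\deg(f_{c,a}^p|_{U_{c,a}(c)})-1$, i.e.\ a double (resp.\ triple) cover ramified at one point: there genuinely are two or three distinct parameters in $H$ with the same B\"ottcher position of $-c$, and they are not affinely conjugate, so no qc-rigidity argument can ``pin them to one another.'' One must pass to a $d_\omega$-th root $\Psi=\sqrt[d_\omega]{\Phi}$ to get a conformal isomorphism $H\to\mathbb{D}$ before discussing boundary behavior.

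Second, and more seriously, your boundary analysis misses the main difficulty. A generic point of $\partial H$ (for Types $A$, $B$) is a parameter where the free critical point $-c$ lies on the Julia set, on the boundary of the attracting basin; such a map is neither hyperbolic, nor parabolic, nor postcritically finite, and $-c$ may be recurrent. To show that each prime-end impression is a singleton you must prove that two such boundary maps with the same combinatorics coincide — a combinatorial rigidity statement for non-hyperbolic cubic maps. Holomorphic motions and qc surgery alone cannot give this (a qc conjugacy obtained from a motion is not conformal unless you also rule out invariant line fields on the Julia set), and Douady--Lavaurs perturbation theory and Thurston rigidity only cover the exceptional boundary points. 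This rigidity is exactly where the bulk of the work lies: one needs a Branner--Hubbard--Yoccoz puzzle with a non-degenerate critical annulus (the existence of such an ``admissible'' puzzle on $\mathcal{S}_p$ is itself a nontrivial construction, Theorem \ref{adm-puzzle}), and then, in the persistently recurrent case, the principal nest with complex bounds \`a la Kahn--Lyubich and Kozlovski--Shen--van Strien to obtain qc-rigidity and absence of invariant line fields (Theorems \ref{c-rigidity}, \ref{cubic-property}). Your proposal contains no substitute for this machinery, so the continuous-extension step does not go through as written. (Your ``monotone combinatorial invariant'' for injectivity of the boundary map is also unsubstantiated; the actual argument uses that $\partial U_{c,a}(f^k(c))$ is a Jordan curve together with a lifting/pullback rigidity argument to exclude two rays landing at one parameter.)
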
 

 Theorem \ref{Sp_hyp} is among one of several conjectural pictures of $\mathcal{S}_p$, proposed by Milnor \cite[p.13]{M2},\cite{M3}.  
 The case $p=1$ is proven independently by Faught \cite{F}  and Roesch \cite{R}.
 The main analytical tool in their proof is the so-called {\it para-puzzle}  technique, whose philosophy,  as interpreted by 
 Douady, is: sowing in the dynamical plane and harvesting in the parameter space. 
  However, the 
  para-puzzle technique  loses its power when dealing with the parameter space with a complicated topology. 
   Since the topology of $\mathcal S_p$ is far beyond understanding 
      when $p$ is large, this makes a tough enemy of the para-puzzles.

 Instead of using  para-puzzle technique, our approach makes the most of  
the dynamical puzzles, combinatorial rigidity, and holomorphic motion theory.
 Our arguments  are {\it local}, this makes our techniques being powerful and 
 serve as a model to study bifurcations on more general algebraic curves: those 
 defined by 
 critical relations, in any critically marked polynomial space.
  
The strategy and organization of the proof is as follows:

A classification and dynamical parameterization of hyperbolic components, due to Milnor, is given in Section \ref{hyp-comp}.
Then some basics of dynamical rays are recalled in Section \ref{dyn-ray}.

Section \ref{BHY-puzzle} solves one main technical difficulty in the Branner-Hubbard-Yoccoz puzzle theory: finding a puzzle with
 a non-degenerate critical annulus.
 The idea is to discuss the relative position of the critical orbits with respect to the candidate graphs, 
 the proof logics has an independent interest.

In Section \ref{rigidity-puzzle}, we prove the combinatorial rigidity for maps on $\mathcal S_p$.
The ideas and methods for quadratic polynomials \cite{H,L, M5} can not work here. 
We take advantage of recent development  \cite{AKLS,KL1,KL2, KSS,KS} in deriving rigidity phenomenon to treat our situations.

We then prove Theorem \ref{Sp_hyp} for two types of hyperbolic components in Section \ref{br},  using rigidity and characterization of boundary maps. 
Finally, we deal with the capture type hyperbolic components in Section \ref{cc}. Instead of using rigidity there, we
make the best use of  holomorphic motion theory.

Theorem \ref{Sp_hyp} then  follows from Theorems \ref{parameterization}(4), \ref{regularity} and \ref{boundary-C}.
Note that its statement is complete because  unbounded hyperbolic components on $\mathcal{S}_p$ are  not Jordan disks, see \cite{BKM}.
To the author's knowledge, Theorem \ref{Sp_hyp} is the first complete description of boundary regularity of stable regions whose parameter space has a non-trivial topology.

\vspace{4 pt} 
\noindent \textbf{Acknowledgement} 
The author  thanks Yongcheng Yin for helpful discussions on polynomials rays and  puzzles. 
The research is supported by NSFC.

\section{Hyperbolic components} \label{hyp-comp}

 In this paper, for any $(c,a)\in \mathcal{S}_p$,  any point $z$ in the Fatou set of $f_{c,a}$, let $U_{c,a}(z)$ be the Fatou component containing $z$.  
 Let's use the notations 
 \bess
 &\mathcal{B}_{c,a}=\{U_{c,a}(f^k_{c,a}(c)); 0\leq k<p\},& \\
 &\mathcal{A}_{c,a}=U_{{{c},a}}(c)\cup U_{c,a}(f_{c,a}(c))\cup\cdots\cup  U_{c,a}(f^{p-1}_{c,a}(c)).&
 \eess
The boundary of each $V\in \mathcal{B}_{c,a}$, according to the work of Roesch and Yin \cite{RY}, is a Jordan curve.
The connectedness locus  of $\mathcal S_p$ is denoted by $\mathcal C(\mathcal S_p)$.
For any $z$, let ${\rm orb}(z)=\{f^{k}_{c,a}(z);k\in \mathbb N\}$ be the set of forward orbit of $z$.

 According to Milnor\cite{M3}, there are four types of bounded hyperbolic components on $\mathcal S_p$ (see Figures 3 and 4):
  
  \vspace{5 pt}
  
  \textbf{Type A} ({\it Adjacent critical points}), with both critical points in the same periodic
Fatou component.

\textbf{Type B} ({\it Bitransitive}), with two critical points in different Fatou components
belonging to the same periodic cycle.
 
\textbf{Type C} ({\it Capture}), with just one critical point in the cycle of periodic Fatou
components. The orbit of the other critical point must eventually land in (or
be “captured by”) this cycle.
 
\textbf{Type D} ({\it Disjoint attracting orbits}), with two distinct attracting periodic orbits,
each of which necessarily attracts just one critical orbit.
 
 \vspace{5 pt}

 \begin{figure}[!htpb]
  \setlength{\unitlength}{1mm}
  {\centering
  \includegraphics[width=75mm]{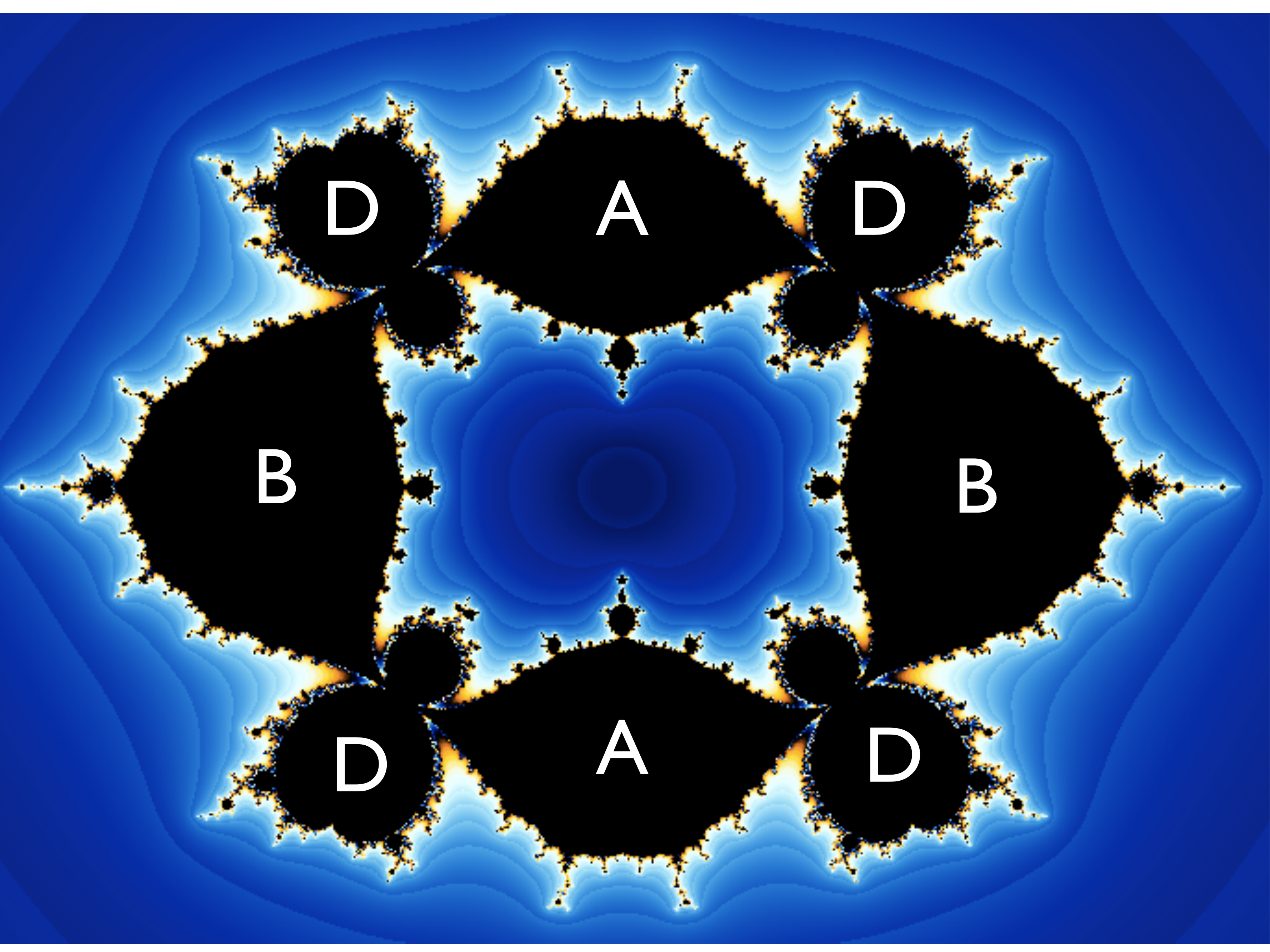}  
  }
  \caption{Hyperbolic components of Types-$A,B,D$  on $\mathcal S_2$.} 
\end{figure}

 \begin{figure}[!htpb]
  \setlength{\unitlength}{1mm}
  {\centering
  \includegraphics[width=75mm]{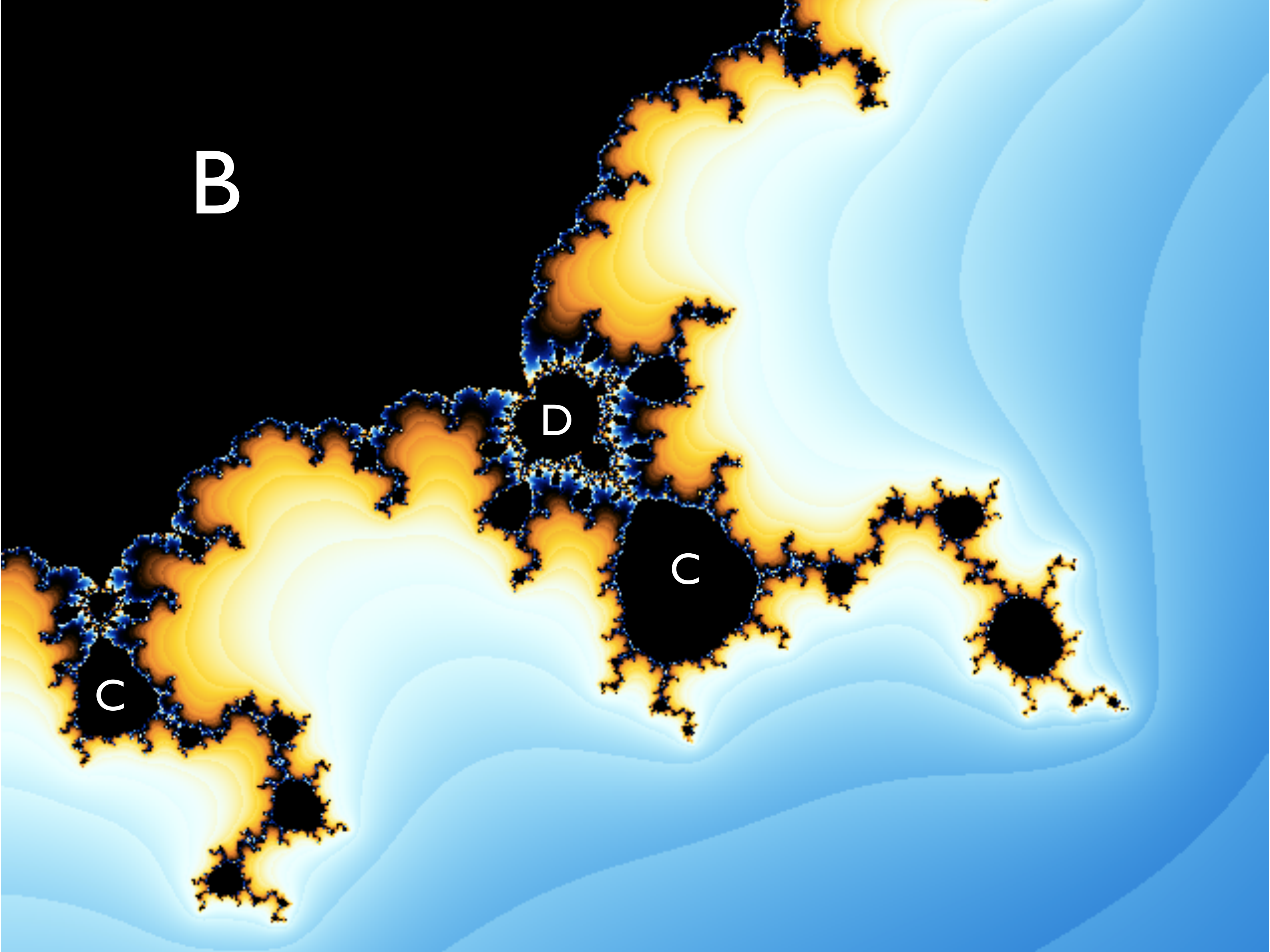}  
  }
  \caption{Some Type-$C$ components  on $\mathcal S_2$.} 
\end{figure}

All these four types of hyperbolic components admit the following natural dynamical parameterizations, due to Milnor \cite[Lemma 6.8]{M3}. This serves as the first step to study the boundaries of hyperbolic components.

   \begin{thm} \label{parameterization}
   Let  $\mathcal{H}$ be a hyperbolic component in $\mathcal C(\mathcal{S}_p)$ of Type-$\omega$.
   
   1. If $\omega=A$, i.e. $-c\in U_{c,a}(c)$, then the map
 $$\Phi: \mathcal{H}\rightarrow \mathbb{D}, \  ({c},a)\mapsto B_{c,a}(-{c})$$
 is a double cover ramified at a single point, where $B_{c,a}$ is the B\"ottcher map of $f^p_{{c},a}$ defined in a neighborhood of $c$.
 
  2. If $\omega=B$, i.e. $-c\in U_{c,a}(f^l_{c,a}(c))$ for some $1\leq l<p$, then the map
  $$\Phi: \mathcal{H}\rightarrow \mathbb{D}, \  ({c},a)\mapsto B_{c,a}(-{c})$$
 is a triple cover ramified at a single point, where $B_{c,a}$ is the B\"ottcher map of $f^p_{{c},a}$ defined in a neighborhood of $f_{c,a}^l(c)$.
 
 3.  If $\omega=C$, i.e. $f_{c,a}^l(-c)\in \mathcal{A}_{c,a}$ for some smallest integer $l>0$, then 
 $$\Phi: \mathcal{H}\rightarrow \mathbb{D}, \  ({c},a)\mapsto B_{c,a}(f_{c,a}^l(-c))$$
 is a conformal isomorphism, where $B_{c,a}$ is the B\"ottcher map of $f^p_{{c},a}$ defined in $U_{c,a}(f_{c,a}^l(-c))$.
 
 4. If $\omega=D$, let $z_{{c},a}\in U_{{{c},a}}(-c)$ be the attracting point with period say $q$, then the multiplier map
 $$\rho: \mathcal{H}\rightarrow \mathbb{D}, \  ({c},a)\mapsto (f^q_{{c},a})'(z_{{c},a})$$
 is  a conformal isomorphism. In this case, $\rho$ can be extended to a homeomorphism
  $\rho: \overline{\mathcal{H}}\rightarrow \overline{\mathbb{D}}$ (implying that $\partial \mathcal{H}$ is a Jordan curve).
 \end{thm}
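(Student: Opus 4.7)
My plan is to establish each of the four statements by showing that the stated map is a well-defined proper holomorphic map of the indicated degree, and then, for Type~D, treating the boundary extension separately.

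\textbf{Setup for Types A, B, C.} The common first step is to construct the B\"ottcher coordinate $B_{c,a}$ at the relevant superattracting periodic point of $f_{c,a}^p$ (namely $c$ in Types A and B, and $f^l_{c,a}(-c)$ in Type C) and extend it as far as possible by repeatedly pulling back along $f_{c,a}^p$. By standard theory, $B_{c,a}$ extends univalently to a maximal disk-like region in the immediate basin, and the extension halts precisely where it would meet another critical value of $f_{c,a}^p$ on the attracting cycle. In Type~A the return map $f_{c,a}^p\colon U_{c,a}(c)\to U_{c,a}(c)$ has degree~$3$ with local degree~$2$ at $c$, so there is exactly one additional critical point on the cycle, namely $-c$, which therefore sits on the boundary of the maximal domain of univalence; this gives $B_{c,a}(-c)\in\mathbb{D}$. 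The same count works in Type~B (with $-c\in U_{c,a}(f^l(c))$), while in Type~C the cycle contains only one critical point and $B_{c,a}$ is a conformal isomorphism $U_{c,a}(f^l(-c))\to\mathbb{D}$ with $f^l(-c)$ in its interior. Since all Fatou components and B\"ottcher coordinates move holomorphically over the hyperbolic region $\mathcal{H}$, each $\Phi$ is holomorphic.

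\textbf{Properness and degree in Types A, B, C.} If $(c_n,a_n)\to(c_\infty,a_\infty)\in\partial\mathcal{H}$, hyperbolicity of the relevant cycle must fail, either through the emergence of an indifferent cycle or through the secondary critical point escaping the immediate basin; in each case $|\Phi(c_n,a_n)|\to 1$, so $\Phi\colon\mathcal{H}\to\mathbb{D}$ is a proper holomorphic map and hence a branched cover. I would compute its degree by analyzing $\Phi^{-1}(0)$, namely the parameters at which the secondary critical point eventually meets the superattracting periodic point. In Type~A, the condition $f_{c,a}^p(-c)=c$ picks out the unique \emph{center}, and Blaschke-product models show that the normalized return map $f_{c,a}^p\mid U_{c,a}(c)$ at the center carries a $\mathbb{Z}/2$-symmetry permuting the two preimages of $0$, giving local degree~$2$; this yields $\deg\Phi=2$. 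The Type~B analysis is parallel (local degree~$3$ at the center coming from the $\mathbb{Z}/3$-symmetry), while in Type~C no residual symmetry remains, so $\Phi$ is injective and hence a conformal isomorphism onto $\mathbb{D}$.

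\textbf{Type D.} The attracting periodic point $z_{c,a}$ near $-c$ is tracked holomorphically over $\mathcal{H}$ via the implicit function theorem, so the multiplier $\rho$ is holomorphic, takes values in $\mathbb{D}$, and is proper (as $|\rho|\to 1$ when the cycle becomes indifferent on $\partial\mathcal{H}$). Injectivity of $\rho$ on $\mathcal{H}$ is delivered by a Douady--Hubbard type quasiconformal surgery: two parameters with equal multiplier can be linearized simultaneously and interpolated by a Beltrami differential supported off the attracting basins, producing a holomorphic deformation that must be trivial by rigidity within the family. This makes $\rho\colon\mathcal{H}\to\mathbb{D}$ a conformal isomorphism. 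The main obstacle, and the deepest part of the theorem, is the extension of $\rho$ to a homeomorphism $\overline{\mathcal{H}}\to\overline{\mathbb{D}}$. I would prove continuous extension by combining compactness of $\overline{\mathcal{H}}\subset\mathcal{S}_p$ with a classification of boundary limits (parabolic, Siegel, Cremer) indexed by the rotation number of the emerging indifferent cycle; injectivity of this extension on $\partial\mathbb{D}$ then reduces to showing that a boundary parameter is determined by the combinatorics of its indifferent cycle, which is a combinatorial rigidity statement in the spirit of the results developed later in the paper.
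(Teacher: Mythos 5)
The paper gives no proof of this theorem: it is quoted verbatim from Milnor \cite[Lemma 6.8]{M3}, so there is no ``paper's own argument'' to compare against. Your sketch, however, contains a concrete error in the part that carries the whole weight of the Type~A and Type~B degree claims. You assert that the center of a Type~A component is characterized by $f_{c,a}^{p}(-c)=c$. This is impossible when $p>1$: the return map $f_{c,a}^{p}\colon U_{c,a}(c)\to U_{c,a}(c)$ has degree~$3$ and has local degree~$2$ at \emph{both} critical points $c$ and $-c$. If $-c$ mapped to $c$ as well, the fiber over $c$ would have total multiplicity at least $2+2=4>3$. Hence $f_{c,a}^{p}(-c)\neq c$ anywhere on a Type~A component with $p>1$; the center is the unique parameter where $-c$ is absorbed into the grand orbit of $c$ under $f^{p}$, which occurs only at some higher iterate $f^{np}(-c)=c$ with $n\geq 2$. (For $p=1$ your condition is correct precisely because there $c=-c=0$.) The $\mathbb{Z}/2$- and $\mathbb{Z}/3$-symmetry heuristics would have to be formulated at the actual center, and as stated they appeal to a parameter that does not exist, so the local-degree computation is not established.

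There is also an unaddressed foundational point: $-c$ sits on the boundary of the maximal univalent B\"ottcher disk for $f_{c,a}^{p}$ at $c$, and it is a critical point of $f_{c,a}^{p}$, so extending $B_{c,a}$ to $-c$ and showing that $\Phi(c,a)=B_{c,a}(-c)$ is a single-valued holomorphic function of $(c,a)\in\mathcal{H}$ requires an argument (e.g.\ Carath\'eodory extension of $B_{c,a}$ to $\overline{\{G^{V}_{c,a}<G^{V}_{c,a}(-c)\}}$, plus holomorphic dependence of the boundary value). This is exactly where the branching of $\Phi$ is created, so it cannot be waved past. The properness argument, the Type~C claim (which needs a positive argument, not merely the absence of symmetry), and especially the Type~D boundary extension (which in Milnor's and Douady--Hubbard's treatment requires landing results and rigidity for indifferent and parabolic cycles) are in the right spirit but remain at the level of plan rather than proof.
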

 
 We remark that for each $p\geq 1$, there are only finitely many Type-$A,B$ components on $\mathcal S_p$, but there are infinitely many Type-$C$ or $D$ components.
 
 By Theorem \ref{parameterization}, for a Type-$\omega\in\{A,B\}$ component $\mathcal H$, one has 
 $${\rm deg}(\Phi)={\rm deg}(f_{c,a}^p|_{U_{c,a}(c)})-1,$$
where $f_{c,a}$ is a representative map in  $\mathcal H$.  The above number depends only on the type $\omega\in\{A,B\}$, not the specific component $\mathcal H$. For this, we write $d_{\omega}={\rm deg}(\Phi)$.
Clearly $d_A=3, d_B=2$. The notation $d_\omega$ will be used later.

%
%
%
 
%
 
 \section{Dynamical rays} \label{dyn-ray}
 
 We introduce the {\it dynamical rays}  in this section, as a preparation for further discussions.
  These materials are standard in polynomial dynamics.

 \subsection{Dynamical internal rays}   There are finitely many maps  on $\mathcal{S}_p$, for which $-c$ meets the orbit of $c$ \cite[Lemma 5.8]{M3}. 
  Let 
 $$\mathcal{S}_p^*=\mathcal{S}_p-\{(c,a); f_{c,a}^{k}(c)=-c \text{ for some } 0< k\leq p\}.$$
 
 Let $(c,a)\in \mathcal{S}_p^*$. For any $V\in\mathcal{B}_{c,a}$,
   its Green function
  $G_{c,a}^V: V\rightarrow [-\infty, 0)$ is defined by 
  $$G^V_{c,a}(z)=\lim_{n\rightarrow +\infty}2^{-n}\log|f_{c,a}^{pn}(z)-w|,$$
  where $w\in {\rm orb}(c)\cap V$.
 One may verify that 
  \begin{equation*}
G^{f_{c,a}(V)}_{c,a} \circ f_{c,a}=
\begin{cases}
 2 G^{V}_{c,a},\ \  & \text{ if }V= U_{c,a}(c),\\
G^{V}_{c,a},\ \ &\text{ if }V=U_{c,a}(f^{k}_{c,a}(c)), 1\leq k<p.
\end{cases}
\end{equation*}
  
 The locus $(G^{V}_{c,a})^{-1}(\ell)=\{z\in V; G^V_{c,a}(z)=\ell\}$ with $\ell<0$ is called an {\it equipotential curve} in $V$.
 The internal rays are defined as follows.
  
  If $f_{c,a}$ is hyperbolic and $-c\in U:=U_{c,a}(f^l_{c,a}(c))$ for some $0\leq l<p$, then the B\"ottcher map
  $B^U_{c,a}$ of $f^p_{{c},a}$ is defined in a neighborhood of $f_{c,a}^{l}(c)$.
   For any $t\in\mathbb{S}$,
the set $R_{c,a}^U(t)$ in $U$ 
 is defined as
the orthogonal trajectory (possibly bifurcates) of the equipotential curves, starting from $f^l_{c,a}(c)$ and
 containing $(B^U_{c,a})^{-1}((0,\epsilon)e^{2\pi it})$
 for some $\epsilon\in (0,1)$. By conformal pushing forward or pulling back via some iterations of  $f_{c,a}$, 
  we can define $R_{c,a}^W(t)$ for any $W\in \mathcal{B}_{c,a}-\{U\}$.
   The set $R_{c,a}^U(t)$ or $R_{c,a}^W(t)$ is called an {\it internal ray} if it does not bifurcate,
 namely $2^nt \neq \arg B^U_{c,a}(-c)$ for any $n\in\mathbb N$. 
  The pulling back procedure allows one to define internal rays in any Fatou component
 whose orbit meets $U$. 
  
   In all other situations,  set $V=U_{c,a}(c)$. The B\"ottcher map $B^{V}_{c,a}$ of $f^p_{{c},a}$ can be defined in $V$, and the internal ray $R_{c,a}^{V}(t)=(B^{V}_{c,a})^{-1}((0,1)e^{2\pi it})$, $\forall \ t\in \mathbb{S}$.
 By conformal pulling back $R_{c,a}^{V}(t)$ via iterations of $f_{c,a}$, one can define the internal ray $R_{c,a}^U(t)$
 in any Fatou component $U  (\neq V)$
 whose orbit meets $V$.
 
One may verify that 
  \begin{equation*}
f_{c,a}(R_{c,a}^V(t))=
\begin{cases}
R_{c,a}^{f_{c,a}(V)}(2t),\ \  & \text{ if }V= U_{c,a}(c),\\
R_{c,a}^{f_{c,a}(V)}(t),\ \ &\text{ if }V=U_{c,a}(f^{k}_{c,a}(c)), 1\leq k<p.
\end{cases}
\end{equation*}

 \subsection{Dynamical external rays} For any $(c,a)\in \mathcal{S}_p$, let $A_{c,a}^{\infty}$ be the  basin of $\infty$ for $f_{c,a}$.
 Near $\infty$, the B\"ottcher map $B_{c,a}^{\infty}$ is defined as 
 $$B_{c,a}^{\infty}(z)=\lim_{n\rightarrow +\infty}\sqrt[3^n]{f_{c,a}^{n}(z)}.$$
 The B\"ottcher map  $B_{c,a}^{\infty}$ is unique if we require that it is asymptotic to the identity map at $\infty$.
 It satisfies $B_{c,a}^{\infty}(f_{c,a}(z))=B_{c,a}^{\infty}(z)^3$ when $|z|$ is large.
 
The Green function $G^\infty_{c,a}: A_{c,a}^\infty\rightarrow (0,+\infty)$ is defined by
  $$G^\infty_{c,a}(z)=\lim_{n\rightarrow +\infty}3^{-n}\log|f_{c,a}^n(z)|.$$
Each locus $(G^\infty_{c,a})^{-1}(\ell)=\{z\in A_{c,a}^\infty; G^\infty_{c,a}(z)=\ell\}$ with $\ell> 0$ is called an {\it equipotential curve}. 
For $t\in \mathbb{R}/\mathbb{Z}$, the set  $R_{c,a}^{\infty}(t)$ is
the orthogonal trajectory (possibly bifurcates) of the equipotential curves, 
starting from $\infty$ and containing $(B_{c,a}^{\infty})^{-1}((R,\infty)e^{2\pi it})$
 for some $R>0$.  
 It is called an {\it external ray}  if it does not bifurcate. 
 Clearly $f_{c,a}(R_{c,a}^{\infty}(t))=R_{c,a}^{\infty}(3t)$.

 \subsection{Continuity of dynamical rays}
 If an internal (or external) ray lands at a repelling point, then they satisfy the following {\it local stability property}:
 
 \begin{lem}\label{cont-ray} Let $(c_0,a_0)\in \mathcal{S}^*_p$ so that the dynamical ray $R^\varepsilon_{c_0,a_0}(\theta)$  
 with $\varepsilon\in \mathcal{B}_{c,a}\cup \{\infty\}$ lands at a repelling periodic point $p_{c_0,a_0}$. Then there is a neighborhood $\mathcal{U}\subset \mathcal{S}^*_p$ of $(c_0,a_0)$ such that for all $({c}, {a})\in \mathcal{U}$, 
 
 1.  the set $R^\varepsilon_{{c}, {a}}(\theta)$ is a ray landing at a repelling periodic point, and
 
 2. the closure $\overline{R^\varepsilon_{{c}, {a}}}(\theta)$ moves continuously in Hausdorff topology with respect to $({c}, {a})\in \mathcal{U}$.
 \end{lem}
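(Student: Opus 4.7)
The plan is to combine holomorphic motion of the repelling periodic landing point with K\oe nigs linearization near that point, and to reconstruct the full ray by pulling back a fundamental piece along inverse branches that vary continuously with $(c,a)$.

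First, because $p_{c_0,a_0}$ is a periodic point and the ray lands there, the angle $\theta$ is periodic under the associated angle map (tripling for external rays, multiplication by $\deg(f_{c_0,a_0}^p|_\varepsilon)$ for internal rays). Passing to a suitable iterate $F_0:=f_{c_0,a_0}^M$, I may assume $p_{c_0,a_0}$ is a fixed point of $F_0$ with multiplier $\lambda_0$ of modulus greater than $1$, and that $F_0$ maps $R^\varepsilon_{c_0,a_0}(\theta)$ to itself as a set. The implicit function theorem yields a neighborhood $\mathcal{U}_0\subset\mathcal{S}_p^*$ of $(c_0,a_0)$ and a holomorphic family $p_{c,a}$ of fixed points of $F_{c,a}:=f_{c,a}^M$ with holomorphic multiplier $\lambda_{c,a}$, while K\oe nigs' theorem provides linearizing charts $\phi_{c,a}$ sending $p_{c,a}$ to $0$ and conjugating $F_{c,a}$ to $w\mapsto\lambda_{c,a}w$ on a disk neighborhood $D_{c,a}$, depending holomorphically on $(c,a)$ after shrinking $\mathcal{U}_0$.

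Next, I describe the ray as a tail inside $D_{c,a}$ joined to a head outside. Inside $D_{c_0,a_0}$, the set $R^\varepsilon_{c_0,a_0}(\theta)\cap D_{c_0,a_0}$ corresponds under $\phi_{c_0,a_0}$ to an arc of a logarithmic spiral through $0$ invariant under $w\mapsto\lambda_0 w$; iterating the univalent contracting inverse branch of $F_{c,a}$ fixing $p_{c,a}$ on a fundamental annular piece of this spiral fills the tail. The head is a compact arc running from the basepoint of the ray (the center of $\varepsilon$, or $\infty$) to $\partial D_{c_0,a_0}$ and decomposes into finitely many iterated univalent preimages of a short starting segment parametrized directly by the appropriate B\"ottcher coordinate. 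Since the internal B\"ottcher coordinate on $\varepsilon$ and the external B\"ottcher coordinate on $A^\infty_{c,a}$ both vary holomorphically over $\mathcal{S}_p^*$, and the inverse branches realizing the head are univalent at $(c_0,a_0)$, they extend univalently over a neighborhood $\mathcal{U}\subset\mathcal{U}_0$, producing a continuous family of heads.

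Assembling, for each $(c,a)\in\mathcal{U}$ the starting segment, its finitely many univalent pullbacks, and the infinite K\oe nigs-generated tail fit together into a ray $R^\varepsilon_{c,a}(\theta)$ landing at the repelling periodic point $p_{c,a}$, and the closure $\overline{R^\varepsilon_{c,a}(\theta)}$ moves continuously in Hausdorff topology in $(c,a)$. The main obstacle is verifying the middle step: I must ensure the finite chain of inverse branches producing the head survives perturbation without colliding with a critical value of $f_{c,a}$. The assumption $(c_0,a_0)\in\mathcal{S}_p^*$ eliminates the only nontrivial obstruction, namely a collision between $-c$ and the forward orbit of $c$; together with the openness of the univalence condition, this allows the shrinking to $\mathcal{U}$ that makes the entire construction work.
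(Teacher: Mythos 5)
Your proof is correct and follows essentially the same two-piece decomposition as the paper's argument: a compact "head" controlled by the B\"ottcher coordinate and finitely many univalent pull-backs, glued to an infinite "tail" obtained by iterating the contracting inverse branch inside a linearizing neighborhood of the perturbed repelling point, with an openness argument keeping the free critical point off the ray. One small imprecision: the condition that prevents a bifurcating ray after perturbation is not the membership in $\mathcal{S}_p^*$ per se, but rather the open condition that $-c$ and its forward orbit avoid the finitely many segments making up the head and their forward images (which holds at $(c_0,a_0)$ precisely because $R^\varepsilon_{c_0,a_0}(\theta)$ is a genuine, non-bifurcating ray landing at a repelling point); your final appeal to "openness of the univalence condition" is the correct mechanism, but the role assigned to $\mathcal{S}_p^*$ is slightly misplaced.
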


 \begin{proof} We only prove the result for external rays, the argument is similar for internal rays. The idea is  
 to cut the external ray $R^\infty_{c_0,a_0}(\theta)$ into two parts: one near $\infty$ and the other near the repelling point $p_{c_0,a_0}$.  Each part moves continuously w.r.t parameters.
This implies that,  after gluing them  together, the external ray itself moves continuously.  Here is the detail:

There exist a neighborhood $\mathcal{U}$ of $(c_0,a_0)$ and a large number $R>1$ such that for all
 $({c}, {a})\in \mathcal{U}$,  the B\"ottcher map $B_{c,a}^{\infty}$  is defined in
 $U_{c,a}^R=\{z\in A_{c,a}^\infty; G^\infty_{c,a}(z)>\log R\}$, and $B_{c,a}^{\infty}: U_{c,a}^R\rightarrow \{w\in \mathbb C;|w|>R\}$ is conformal.
 
  For all $t\in \mathbb{R}/\mathbb{Z}$, $({c}, {a})\in \mathcal{U}$ and $k\in\mathbb N$, let
  $$L^0_{c,a}(t)=(B_{c,a}^{\infty})^{-1}((R,+\infty)e^{2\pi it})$$
  and $L^k_{c,a}(t)$ be the component of $f_{c,a}^{-k}(L^0_{c,a}(3^kt))$ containing $L^0_{c,a}(t)$.

   We may shrink $\mathcal{U}$ if necessary so that

    \begin{itemize}

\item   after perturbation in  $\mathcal{U}$,   the $f_{c_0,a_0}$-repelling periodic  point $p_{c_0,a_0}$  becomes an  $f_{c,a}$-repelling periodic point $p_{c,a}$, and 

\item  there is a large integer $s$ (independent of $(c,a)\in \mathcal{U}$) so that
 $E_{c,a}:={L^{s+1}_{c,a}(\theta)\setminus L^{s}_{c,a}(\theta)}$ is included in a linearized neighborhood $Y_{c,a}$ of  $p_{c,a}$. 

\item  $-c\notin \bigcup_{k\geq 0}L_{c,a}^{s+1}(3^k\theta)$ for all   $(c,a)\in  \mathcal{U}$.

\end{itemize}


Note that $\theta$ is periodic under the angle tripling map $t\mapsto 3t  \ ({\rm mod} \  \mathbb{Z})$. Let $l$ be its period.
Since the inverse $h=(f_{c,a}^l|_{Y_{c,a}})^{-1}$ is contracting,  the arc 
$$T_{c,a}=\bigcup_{k\geq0}  h^k(\overline{E_{c,a}})$$
moves continuously with respect to $(c,a)\in \mathcal{U}$. 

Note that neither $E_{c,a}$ nor $T_{c,a}$ meets the backward orbit of $-c$.
 Hence the set $R_{c,a}^\infty(\theta)$  defines an external ray.
 
Finally, the continuity of  $(c,a)\mapsto\overline{R_{c,a}^{\infty}}(\theta)$ follows  from the fact that $R_{c,a}^{\infty}(\theta)={L^{s+1}_{c,a}(\theta)\cup T_{c,a}}$ and the continuity of $L^{s+1}_{c,a}(\theta)$ and $T_{c,a}$.
 \end{proof}

\subsection{Intersection of attracting components} 

\begin{pro} \label{common-point} 
 Let $(c,a)\in\mathcal{S}_p$, and $V_1, V_2 \in\mathcal{B}_{c,a}$ with $V_1\neq V_2$.
  If $\partial V_1\cap \partial V_2\neq \emptyset$, then   $\partial V_1\cap \partial V_2$
 is a singleton $\{q\}$, satisfying that $f_{c,a}^p(q)=q$. 
\end{pro}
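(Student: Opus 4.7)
The plan is to prove the two assertions of the proposition in order: first I would show that $\partial V_1\cap\partial V_2$ has at most one point via a topological obstruction, and then conclude the periodicity from the $f_{c,a}^p$-invariance of each $\partial V_i$.

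For the singleton property, I would argue by contradiction: suppose $\partial V_1\cap\partial V_2$ contains two distinct points $q_1,q_2$. The starting point is that each $\partial V_i$ is a Jordan curve (Roesch--Yin \cite{RY}), so in the generic finite-intersection case, $G:=\partial V_1\cup\partial V_2$ is a finite planar graph on $\hat{\mathbb C}$ with $k=|\partial V_1\cap\partial V_2|\geq 2$ vertices and $2k$ edges; by Euler's formula it has $k+2$ faces, two of which are $V_1,V_2$. The key topological step is that every face $W$ of $G$ coincides with a Fatou component of $f_{c,a}$: $W$ is open and connected with $\partial W\subset G\subset J_{f_{c,a}}$, so since $J_{f_{c,a}}$ has empty interior, connectedness of $W$ forces $W\subset F_{f_{c,a}}$, and then $W$ must exhaust the Fatou component $U$ containing it (otherwise a point of $U\setminus W$ would yield a boundary point of $W$ lying in $U\cap J_{f_{c,a}}=\emptyset$). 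Applied to the face $F_\infty$ containing $\infty$, this gives $F_\infty=A_{c,a}^{\infty}$; since $\partial F_\infty$ is a Jordan curve contained in $\partial V_1\cup\partial V_2$, the Julia set $J_{f_{c,a}}=\partial A_{c,a}^{\infty}$ is itself a Jordan curve. But then $K_{f_{c,a}}$ is a closed Jordan disk with connected interior, forcing every bounded Fatou component to coincide with that interior---contradicting $V_1\neq V_2$.

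Once $\partial V_1\cap\partial V_2=\{q\}$ is established, the fixed-point statement is immediate: since $V_i\in\mathcal{B}_{c,a}$ is periodic under $f_{c,a}$ with period dividing $p$, we have $f_{c,a}^p(V_i)=V_i$ and hence $f_{c,a}^p(\partial V_i)=\partial V_i$, so $f_{c,a}^p(q)\in\partial V_1\cap\partial V_2=\{q\}$. The main obstacle will be handling the case where $\partial V_1\cap\partial V_2$ is a priori infinite, for which the finite Euler argument does not directly apply; here I would instead run the Fatou-component argument on the unbounded component of $\hat{\mathbb C}\setminus\overline{V_1\cup V_2}$, which still must equal $A_{c,a}^{\infty}$ and yields the same Jordan-curve Julia set contradiction, or else reduce to the finite case by replacing boundary arcs with two Jordan chords inside $V_1,V_2$ joining a pair of chosen intersection points.
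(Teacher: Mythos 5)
The proposal contains a genuine gap at what you call the ``key topological step,'' namely the claim that any face $W$ of the graph $G=\partial V_1\cup\partial V_2$ must satisfy $W\subset F(f_{c,a})$ merely because $W$ is open and connected, $\partial W\subset G\subset J(f_{c,a})$, and $J(f_{c,a})$ has empty interior. These hypotheses do not force $W\cap J(f_{c,a})=\emptyset$: that $J(f_{c,a})$ has empty interior only says $W\cap F(f_{c,a})$ is dense in $W$, and $W\cap J(f_{c,a})$ being relatively closed and proper in a connected $W$ is perfectly consistent. Indeed for the unbounded face $F_\infty$ the conclusion $F_\infty=A_{c,a}^\infty$ is generically false: the Julia set of $f_{c,a}$ is usually far larger than the two Jordan curves $\partial V_1,\partial V_2$ (for $p\geq 2$ there are already other periodic Fatou components whose boundaries lie outside $G$), so $F_\infty$ contains plenty of Julia points and everything downstream of ``$J(f_{c,a})$ is a Jordan curve'' collapses.

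What is missing is a \emph{dynamical} input, and this is exactly what the paper supplies. Let $U$ be the unbounded component of $\mathbb C\setminus(\overline{V_1}\cup\overline{V_2})$ and $B=\mathbb C\setminus\overline U$. The boundary $\partial B$ lies in $\partial V_1\cup\partial V_2$, which is contained in the compact, forward-invariant set $\bigcup_{0\leq j<p}f_{c,a}^j(\partial V_1\cup\partial V_2)$; hence $\{f_{c,a}^n|_{\partial B}\}_n$ is uniformly bounded, the maximum principle extends this bound to $\{f_{c,a}^n|_B\}_n$, and Montel's theorem makes this family normal, forcing $B\subset F(f_{c,a})$. If $\partial V_1\cap\partial V_2$ had two points, $B$ would necessarily contain Julia points --- contradiction. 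This normality argument cannot be replaced by the Euler-characteristic/face count, which is purely combinatorial and never touches the dynamics. (Your treatment of the $f_{c,a}^p$-fixed-point claim once the singleton is established is fine and agrees with the paper; the Euler count is also unnecessary since the paper's argument handles an a priori infinite intersection directly.)
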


\begin{proof} Let $U$ be the unbounded component of $\mathbb{C}-\overline{V_1}\cup\overline{V_2}$, and $B=\mathbb{C}-\overline{U}$.
Clearly, $V_1\cup V_2\subset B$.
If $\partial V_1\cap \partial V_2$ contains at least two points, then $B\cap J(f_{c,a})\neq \emptyset$.
Let's consider the iterations $\{f_{c,a}^n|_{B}\}_{n\geq 1}$. Note that the iterations $\{f_{c,a}^n|_{\partial B}\}_{n\geq 1}$ is uniformly bounded. By the maximum principle, the 
iterations $\{f_{c,a}^n|_{B}\}_{n\geq 1}$ is uniformly bounded too. By Montel's theorem,  $\{f_{c,a}^n|_{B}\}_{n\geq 1}$ is a normal family, implying that 
$B$ is contained in the Fatou set. This contradicts  $B\cap J(f_{c,a})\neq \emptyset$. 

If $q\in \partial V_1\cap \partial V_2$, then obviously $f_{c,a}^p(q)\in \partial V_1\cap \partial V_2$. Above argument shows that $\partial V_1\cap \partial V_2$ consists of a singleton, implying that  $f_{c,a}^p(q)=q$.
\end{proof}
 
  \begin{rmk} \label{common-l} Assume $-c\notin \mathcal{A}_{c,a}$, and $\partial V_1\cap \partial V_2=\{q\}$
  for $V_1, V_2 \in\mathcal{B}_{c,a}$. Then $q$ is the common landing point of the internal rays $R_{c,a}^{V_1}(0)$ and $R_{c,a}^{V_2}(0)$.
    \end{rmk}

  \section{Branner-Hubbard-Yoccoz Puzzle}\label{BHY-puzzle}

In this section, we first introduce some basic definitions for the Branner-Hubbard-Yoccoz Puzzle theory. Among these definitions, the most important one (for this section) is the {\it admissible puzzle}. The main result here is to show the existence of admissible puzzles for
 maps on $\mathcal{S}_p$ (Theorem \ref{adm-puzzle}).

\subsection{Definitions} \label{definition}

 Let $X,X'$ be open subsets of $\mathbb{\widehat{C}}$, each 
is bounded by finitely many Jordan curves, such that $X'\Subset X\neq \mathbb{\widehat{C}}$. A proper holomorphic
map $f : X'\rightarrow X$ is called a {\it rational-like map}.
We denote by ${\rm deg}(f)$ the topological degree
of $f$ and by $K(f)=\bigcap_{n\geq 0}
f^{-n}(X)$ the
 {\it filled Julia set}, by $J(f)=\partial K(f)$ the {\it Julia set}. The set of critical points on $K(f)$ is denoted by $C(f)$.
  A rational-like map $f : X'\rightarrow X$  is called {\it polynomial-like} if $X,X'$ are Jordan disks and $K(f)$ is connected.
 
A finite graph $\Gamma\subset \overline{X}$ is called a {\it puzzle} of $f$ if it
satisfies the conditions:  $\partial X\subset \Gamma$, $f(\Gamma\cap X')\subset \Gamma$,  and the orbit of each critical point  of $f$ avoids $\Gamma$.

The {\it puzzle pieces} $P_n$ of depth $n\geq0$ are the connected components of $f^{-n}(X\setminus \Gamma)$,
and the one containing the point $z$ is denoted by $P_n(z)$. 
  Let  $$\Gamma_\infty=\bigcup_{k\geq 0}f^{-k}(\Gamma).$$

For any  $z\in J(f)-\Gamma_\infty$, the puzzle piece  $P_n(z)$ is well defined for all $n\geq 0$. In this case, let $P_n^*(z)=\overline{P_n(z)}$. For  $z\in J(f)\cap\Gamma_\infty$, let $P_n^*(z)=\bigcup \overline{P_n}$, where the union is taken for those $P_n$'s satisfying that $z\in\partial P_n$.
The  {\it impression}  ${\rm Imp}(z)$  of $z$   is defined by
\begin{equation*}
{\rm Imp}(z)=
 \bigcap_{n\geq0} {P^*_n(z)}.
\end{equation*}

For any $z\in J(f)-\Gamma_\infty$, the {\it tableau} $T_f(z)$ is the two-dimensional array $(P_{n,l}(z))_{n,l\geq 0}$ with $P_{n,l}(z)=P_{n}(f^l(z))$.
  The tableau $T_f(z)$ is called
{\it periodic} if there is an integer $k\geq 1$ such that $P_n(z)=f^k(P_{n+k}(z))$ for all
$n\geq 0$. Otherwise, $T_f(z)$ is said to be {\it aperiodic}.
For $n,l\geq 0$, we say the position $(n,l)$ of $T_f(z)$ is critical if $P_{n,l}(z)$ contains some critical point $c\in C(f)$ (in this case, we say $(n,l)$ is $c$-critical). We say the tableau $T_f(z)$ is {\it non-critical} if there exists
an integer $n_0\geq0$ such that $(n_0, j)$ is not critical for all $j>0$.  Otherwise $T_f(z)$ is called {\it critical}.

\vspace{5 pt}

All the tableaus satisfy the following two rules \cite{BH2,M5}:

\vspace{4 pt}

\textbf{R1.}  If $P_{n,l}(z)=P_n(z')$, then $P_{i,j}(z)=P_{i,j}(z')$ for all $0\leq i+j\leq n$.

\textbf{R2.}  Let $c\in C(f)$. Assume $T_f(c)$ and $T_f(z)$ satisfy 

(a) $P_{n+1-l,l}(c)=P_{n+1-l}(c')$ for some $c'\in C(f)$ and $n>l>0$,
and $P_{n-i,i}(c)$ contains no critical points for $0<i<l$.

(b) $P_{n,m}(z)=P_n(c)$ and $P_{n+1,m}(z)\neq P_{n+1}(c)$ for some $m>0$.

Then $P_{n+1-l,m+l}(z)\neq P_{n+1-l}(c')$.

\vspace{5 pt}

We say the forward orbit of $x$ {\it combinatorially accumulates} to $y$,
written as $x\xrightarrow{f} y$, if for any $n>0$, there exists $j>0$ such that $y\in P_{n,j}(x)$,
i.e. $f^j(P_{n+j}(x))=P_n(y)$. It is clear that if $x\xrightarrow{f} y$ and $y\xrightarrow{f} z$, then $x\xrightarrow{f} z$.

An aperiodic tableau $T_f(c)$ with  $c\in C(f)$ is said to be  {\it recurrent} if $c\xrightarrow{f} c$.  Otherwise $T_f(c)$ is called
{\it non-recurrent}. 

For two critical puzzle pieces, we say that $P_{n+k}(c')$ is a {\it child} of $P_n(c)$ if
$f^{k-1}: P_{n+k-1}(f(c')) \rightarrow P_n(c)$ is a conformal map.

 Assume that $T_f(c)$ is recurrent, let's define 
 $$[c]_f=\{c'\in C(f); c \xrightarrow{f} c' \text{ and } c' \xrightarrow{f} c \}.$$ 
 We say that $T_f(c)$ is {\it persistently recurrent}
if for any $c_1\in [c]_f$ and any $n\geq 0$, the piece $P_n(c_1)$ has only finitely many children.
Otherwise, $T_f(c)$ is said to be {\it reluctantly recurrent}.

\begin{defi}[Admissible puzzle]
Let $\ell\geq 1$ be an integer,  a puzzle $\Gamma$ is said $\ell$-{\it admissible} for $f$ if it satisfies the conditions: 

(1). for each  $c\in C(f)$, there is an integer $d_c\geq0$ with $\overline{ P_{d_c+\ell}(c)}\subset P_{d_c}(c)$.

(2). all periodic points on $\Gamma\cap J(f)$ are repelling, and

(3). each puzzle piece is a Jordan disk. 
\end{defi}
 
 By definition, an $\ell$-admissible puzzle is always $\ell'$-admissible, where $\ell'\geq \ell$.
 The existence of an admissible puzzle, when combining with analytic techniques,  leads to significant properties of the map $f$ (e.g. local connectivity of Julia set, rigidity, see Section \ref{rigidity-puzzle}). Our task in next subsection is to show the existence of admissible puzzles for most maps on $\mathcal{S}_p$.

  \subsection{Cubic polynomials} \label{cubic-graph}
 Define $\mathcal{C}_0(\mathcal S_p)\subset\mathcal C(\mathcal S_p)$ by
  $$\mathcal{C}_0(\mathcal S_p)=\{(c,a)\in \mathcal C(\mathcal S_p); f^k_{c,a}(-c)\notin  \mathcal A_{c,a} \text{ for any } 
  k\in\mathbb N\}.$$

   Let $f_{c,a}\in \mathcal C_0(\mathcal{S}_p)$ and
  $$X_{c,a}=\mathbb{C}\setminus\Big((G^{\infty}_{c,a})^{-1}([1,\infty))\cup\bigcup_{V\in \mathcal{B}_{c,a}} (G_{c,a}^{V})^{-1}((-\infty, -1])\Big).$$
 Obviously, the set $ X'_{c,a}:=f_{c,a}^{-p}(X_{c,a})$ satisfies $\overline{X'_{c,a}}
\subset X_{c,a}$.
 
 Let $V=U_{c,a}(q)$ for $q\in\{f_{c,a}^k(c); 0\leq k<p\}$, and $\tau$ be the angle doubling map.
   Given a $\tau$-(pre-)periodic angle $\theta$,
  let $\zeta({V,\theta})$ be the landing point of the internal ray $R_{c,a}^V(\theta)$. The point  $\zeta({V,\theta})$  is either (pre-)repelling or (pre-)parabolic and hence it is also the landing point of finitely many external rays (See \cite[Theorems 18.10 and 18.11]{M1}), say
  $R_{c,a}^\infty(\alpha_1), \cdots, R_{c,a}^\infty(\alpha_m)$. If $\zeta({V,\theta})$ is periodic and repelling, then these external rays are all periodic with the same period.

  We define:
  $$\mathbf{R}^q_{c,a}(\theta)=\overline{R_{c,a}^V}(\theta)\cup\overline{R_{c,a}^\infty}(\alpha_1)\cup\cdots \cup\overline{R_{c,a}^\infty}(\alpha_m),$$
  $$\gamma_{c,a}(\theta)=\bigcup_{k\geq 0}f_{c,a}^k(\mathbf{R}^q_{c,a}(\theta)).$$

  Clearly, 
   when $\theta$ is $\tau$-periodic, the graph $\gamma_{c,a}(\theta)$ satisfies $f_{c,a}(\gamma_{c,a}(\theta))=\gamma_{c,a}(\theta)$. 
 Given two rational angles $\theta_1\neq \theta_2$, let $S_q(\theta_1,\theta_2)$ be the component of 
 $\mathbb{C}\setminus (\mathbf{R}^q_{c,a}(\theta_1)\cup\mathbf{R}^q_{c,a}(\theta_2))$ containing the internal rays  $R_{c,a}^V(t)$
 with $\theta_1\leq t\leq \theta_2$.  Let $S^*_q(\theta_1,\theta_2)=\mathbb{C}\setminus S_q(\theta_2,\theta_1)$.
 Clearly, $S^*_q(\theta_1,\theta_2)$ is a closed set containing $S_q(\theta_1,\theta_2)$. See Figure 5.

 \begin{figure}[h]
\begin{center}
\includegraphics[height=6cm]{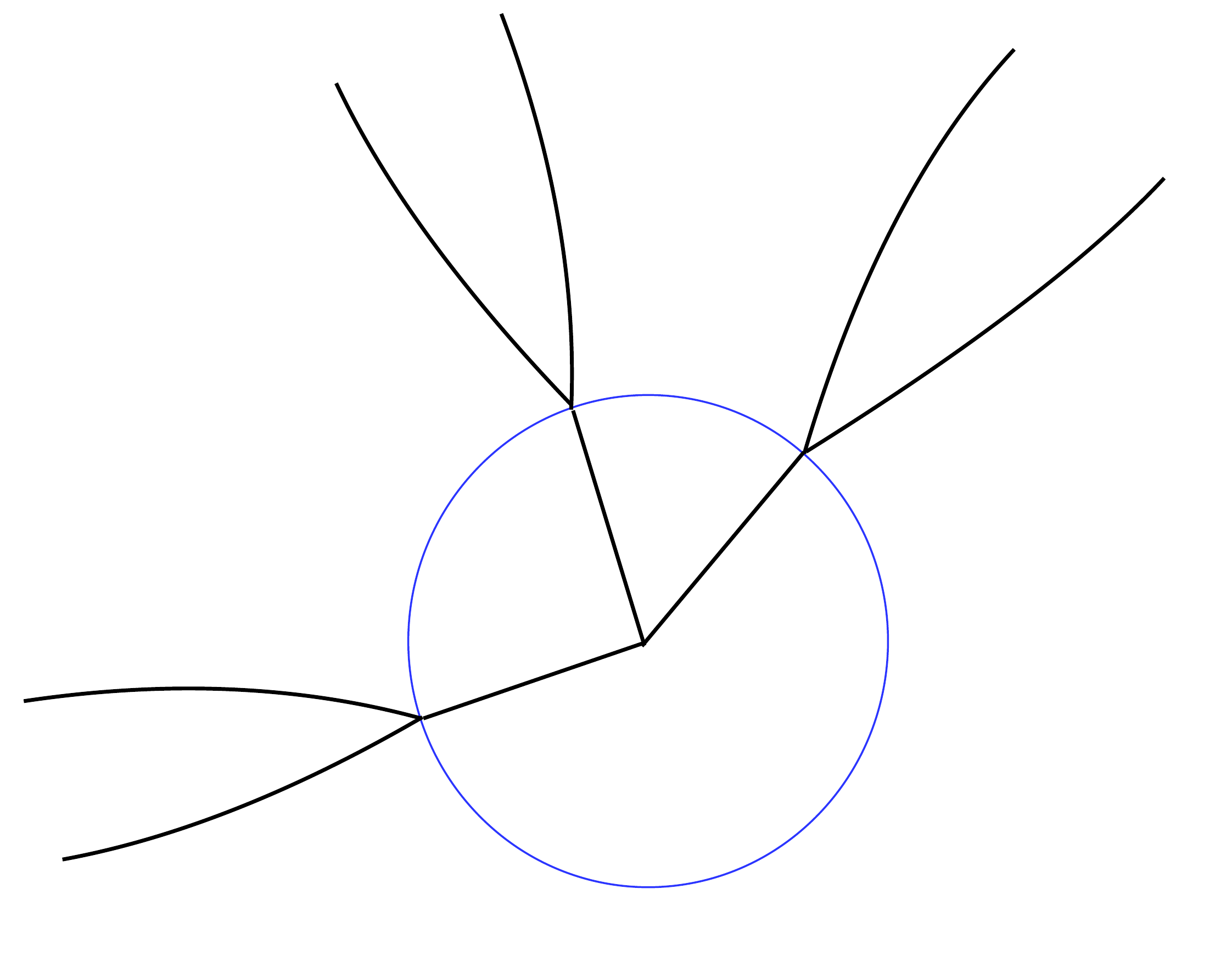}\put(-137,55){$\frac{4}{7}$}
 \put(-80,75){$\frac{1}{7}$} \put(-122,76){$\frac{2}{7}$}  \put(-135,123){$Q_1$}\put(-55,123){$Q_2$}  \put(-100,123){$Q$} 
  \put(-107,28){$V$}\put(-105,49){$q=c$}
 \caption{The graph $\gamma_{c,a}(\frac{1}{7})$ in the case $p=1$. The regions $Q,Q_1,Q_2$ are three components of $\mathbb C-\gamma_{c,a}(\frac{1}{7})$.
 Note that $S_q(\frac{1}{7},\frac{2}{7})=Q$, $S^*_q(\frac{1}{7},\frac{2}{7})=\overline{Q}\cup\overline{Q}_1\cup \overline{Q}_2$.
 Here $V=U_{c,a}(q)$. }
\end{center}\label{f5}
\end{figure}

   \begin{figure}[h]
\begin{center}
\includegraphics[height=6.5cm]{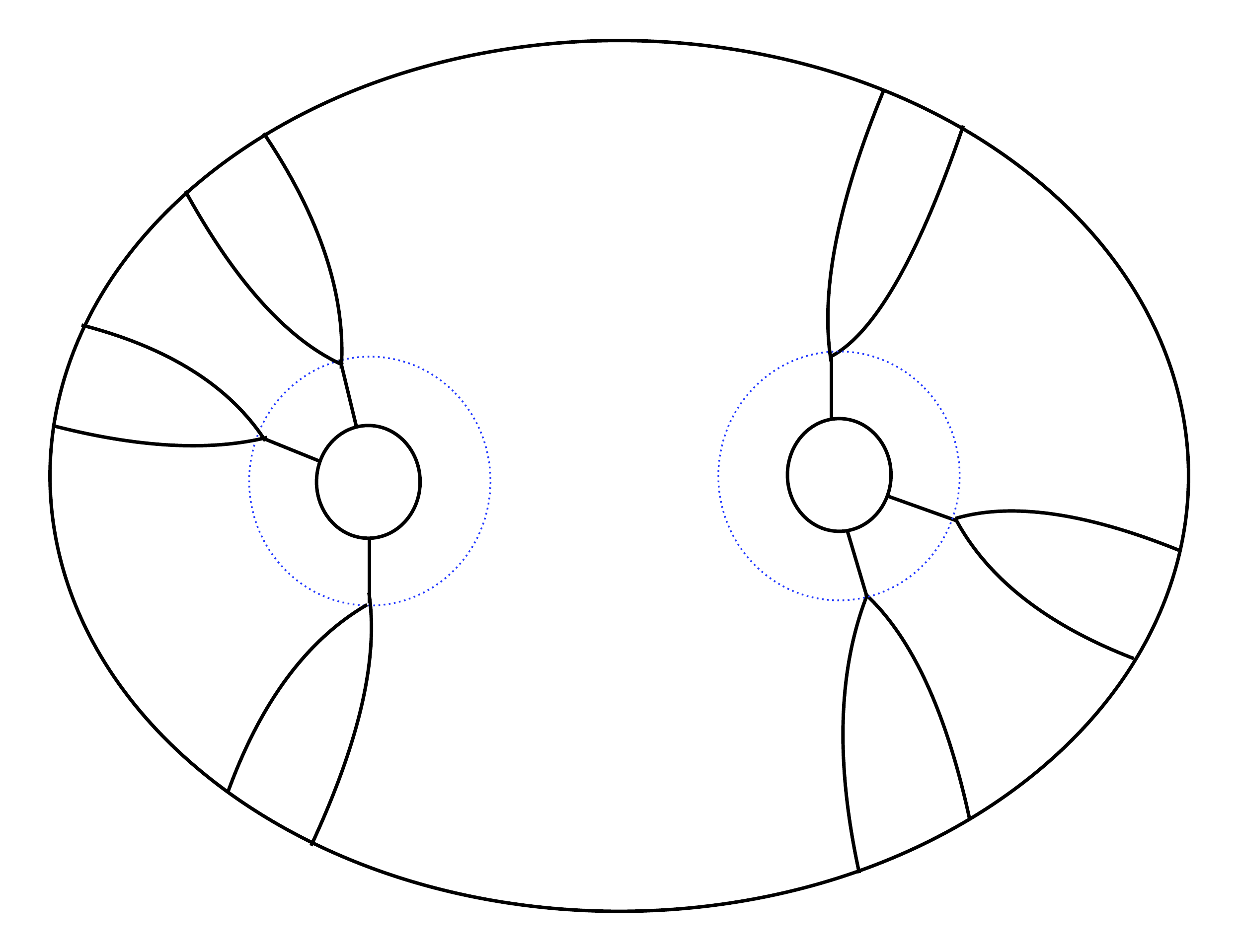} \put(-165,70){$\frac{4}{7}$}  \put(-188,85){$\frac{2}{7}$}  \put(-170,108){$\frac{1}{7}$} 
 \put(-83,70){$\frac{1}{7}$} \put(-88,108){$\frac{4}{7}$} \put(-65,92){$\frac{2}{7}$}  \put(-172,90){$\bullet c$}\put(-84,90){$q\bullet$}  
 \caption{A possible structure of  $\Gamma_{c,a}(\frac{1}{7})$ in the case $p=2$, here $q=f_{c,a}(c)$. }
\end{center}\label{f5}
\end{figure}

   The graph $\Gamma_{c,a}(\theta)$ induced by $\theta$ is defined as follows (see Figure 6):
  $$\Gamma_{c,a}(\theta)=\partial X_{c,a}\cup(X_{c,a}\cap \gamma_{c,a}(\theta)).$$

   The following is the main result of this section.
  
    \begin{thm}\label{adm-puzzle}
Any map $f_{c,a}\in \mathcal C_0(\mathcal{S}_p)$ admits a $p$-admissible puzzle $\Gamma$.
 In fact, at least one of the graphs
$$\Gamma_{c,a}\Big(\frac{1}{7}\Big), \ \Gamma_{c,a}\Big(\frac{3}{7}\Big), \ \Gamma_{c,a}\Big(\frac{1}{7}\Big)\cup\Gamma_{c,a}\Big(\frac{3}{7}\Big)$$
 is a $p$-admissible puzzle.
 \end{thm}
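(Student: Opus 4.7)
The plan is to verify the three conditions of $p$-admissibility for the candidate graphs in turn. Conditions (2) (repelling periodic vertices) and (3) (Jordan disk puzzle pieces) are soft consequences of the landing properties of the relevant rays; the substance of the theorem lies in condition (1), the existence of a non-degenerate $p$-annulus, and this is where the freedom of choosing among $\Gamma_{c,a}(1/7)$, $\Gamma_{c,a}(3/7)$, and their union is exploited.

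The preparatory step is to control the landing behavior of the rays in $\gamma_{c,a}(\theta)$. For $\theta\in\{1/7,3/7\}$, the angle $\theta$ lies in a $3$-cycle under angle doubling (either $\{1/7,2/7,4/7\}$ or $\{3/7,5/7,6/7\}$), so the internal rays $R^V_{c,a}(\theta), R^V_{c,a}(2\theta), R^V_{c,a}(4\theta)$ land at a periodic orbit on $\partial V$ which is either repelling or parabolic. Since the only non-superattracting critical orbit is that of $-c$, a Shishikura-type bound on non-repelling cycles leaves at most one parabolic cycle in the Julia set of $f_{c,a}$, so for all but an exceptional combinatorial configuration at least one of the two candidate landing cycles is repelling; choosing such a $\theta$ and invoking Lemma \ref{cont-ray} for the matching external rays gives (2), and a direct topological inspection of the finite union of ray-pairs and equipotential arcs forming $\Gamma_{c,a}(\theta)$ gives (3). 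The residual exceptional configurations are absorbed by the enriched choice $\Gamma_{c,a}(1/7)\cup\Gamma_{c,a}(3/7)$.

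The core step is condition (1). The periodic critical point $c$ and its forward orbit lie outside $X_{c,a}$ by construction (they sit in the inner equipotential punctures), so the only critical point in $C(f_{c,a})$ to verify is $-c$. I plan to follow the suggestion in the introduction and track the forward orbit $\{f^k_{c,a}(-c)\}_{k\geq 0}$ with respect to the sector decomposition $S_q, S^*_q$ of $\mathbb C\setminus\gamma_{c,a}(\theta)$. The key combinatorial claim is that if condition (1) fails with respect to $\Gamma_{c,a}(1/7)$ (that is, $\overline{P_{d+p}(-c)}$ always meets $\partial P_d(-c)$), then the orbit of $-c$ must satisfy a rigid invariance constraint pinned to the structure of the $3$-cycle $\{1/7,2/7,4/7\}$; the analogous failure for $\Gamma_{c,a}(3/7)$ pins the orbit to an incompatible constraint for $\{3/7,5/7,6/7\}$. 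The two constraints cannot hold simultaneously, so at least one single-graph choice works, and in the remaining borderline configurations the enriched graph $\Gamma_{c,a}(1/7)\cup\Gamma_{c,a}(3/7)$ produces strictly more separating cuts and forces non-degeneracy.

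The main obstacle is the combinatorial incompatibility argument above: one needs an explicit description of the cell structure of $\mathbb C\setminus\gamma_{c,a}(\theta)$, an analysis of how $f_{c,a}$ permutes these cells, and a case-by-case argument ruling out every $f_{c,a}$-forward-invariant configuration of the orbit of $-c$ that would block non-degeneracy for both $3$-cycles simultaneously. This will likely split according to whether $-c$ lies in the Julia set or in a separate attracting/parabolic basin, with the Julia case being the delicate one; there the tableau rules R1 and R2 should play a central role, since they constrain how $T_{f_{c,a}}(-c)$ can combinatorially accumulate to the critical cycle and thereby to the graph.
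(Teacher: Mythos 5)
Your high-level strategy is the right one: the substance of the theorem really is condition (1), and the decisive move really is to track where the forward orbit of $-c$ sits relative to the sector decomposition of $\mathbb C\setminus\gamma_{c,a}(\theta)$. That much matches the paper. But the way you propose to execute the core step diverges from what actually works, and some of the proposed machinery is a misdirection.

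First, the paper's argument is a \emph{direct} case analysis, not an incompatibility/contradiction argument. One does not assume that condition (1) fails for $\Gamma_{c,a}(1/7)$ and derive a "rigid invariance constraint"; rather, one partitions the plane into three explicit sectors (for $p=1$, the sets $S^*_c(9/14,1/14)$, $S^*_c(1/14,5/14)$, and $S_c(5/14,9/14)$), observes that ${\rm orb}(-c)$ must meet one of them, and then reads off directly which graph produces a compactly nested pair of pieces: the first sector hands you $\Gamma_{c,a}(1/7)$, the second $\Gamma_{c,a}(3/7)$, and the third the union graph. The point is that for the right sector, the relevant component $Q$ of $f_{c,a}^{-p}(X_{c,a}\setminus\Gamma)$ containing some $f_{c,a}^n(-c)$ has $\overline Q$ already inside the depth-$0$ piece, because the boundary rays of $Q$ and of its parent piece are strictly separated in angle. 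Making your contrapositive version precise would require characterizing every forward-invariant configuration that blocks nesting for a fixed graph, which is strictly harder than just looking at where the orbit lands.

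Second, tableau rules R1 and R2 play no role here, and the dichotomy "$-c$ in the Julia set vs.\ in a basin" is not the relevant one. The tableau rules are used later, in the rigidity section; the admissibility proof is purely a geometric statement about ray angles and nesting of sectors and does not see the recurrence structure of $T_f(-c)$ at all. Planning around the tableau would lead you away from the actual argument.

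Third, you have not engaged with the real technical difficulty that appears when $p>1$: a puzzle piece $Q$ can have its closure touching several different periodic Fatou components $\overline{U_{c,a}(f^k c)}$ simultaneously, and then compact containment $\overline Q\subset f_{c,a}^p(Q)$ is no longer obvious. The paper isolates this in a separate lemma (Lemma \ref{compact-in}) and organizes the case analysis by the count $\nu(Q)$ of periodic basins whose closures meet $\overline Q$; the cases $\nu(Q)=0$, $\nu(Q)=1$, and $\nu(Q)\geq 2$ behave differently and need separate treatment. Without some version of this, your "direct topological inspection" for condition (3) and, more importantly, your non-degeneracy argument for condition (1) have a genuine gap for $p\geq 2$. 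Finally, your dispatch of condition (2) via "at most one parabolic cycle" is fine in spirit (there is only one free critical point), but what is actually done in the paper is also explicit: if the $\theta=1/7$ ray lands at a parabolic point or hits ${\rm orb}(-c)$, one shows that $\Gamma_{c,a}(3/7)$ is then automatically $p$-admissible, and vice versa; this has to be checked, not just asserted.
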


 Before the proof, we explain our strategy.

 \vspace{4pt}

{\it Equivalent statement and strategy.}   
 Note that for any $k\in\mathbb N$ and any graph $\Gamma\in\{\Gamma_{c,a}(\frac{1}{7}), \Gamma_{c,a}(\frac{3}{7}), \Gamma_{c,a}(\frac{1}{7})\cup\Gamma_{c,a}(\frac{3}{7})\}$, the set $f_{c,a}^{-k}(\Gamma)$ is connected (because the pre-images of external rays are external rays,  connecting the rest parts of $f_{c,a}^{-k}(\Gamma)$), so each component of
 $f_{c,a}^{-k}(X_{c,a}\setminus\Gamma)$ is a Jordan disk.
 
 Our goal is to
 show that {\it there is a puzzle $\Gamma$ among the three candidate graphs, with the property that there is a component $Q$ of $f_{c,a}^{-p}(X_{c,a}\setminus\Gamma)$, a component $P$ of $X_{c,a}\setminus\Gamma$, satisfying that}
 $${\rm orb}(-c)\cap Q\neq \emptyset, \ \overline{Q}\subset P.$$
 
 Then it's not hard to see that $\Gamma$ is a $p$-admissible puzzle.
  In fact, assume $f_{c,a}^n(-c)\in Q$ for some $n\geq 0$. The puzzle pieces induced by $\Gamma$ satisfy that 
 $\overline{P_{p}(f_{c,a}^n(-c))}\subset P_{0}(f_{c,a}^n(-c))$.
By taking $f_{c,a}^n$-preimages, we see that $\overline{P_{p+n}}(-c)\subset P_{n}(-c)$,
  implying that $\Gamma$ is $p$-admissible.
 
 \vspace{3pt}
 
 The main idea of the proof is to discuss the relative position of the critical orbit with respect to the candidate graphs.
 The argument has some independent interest.
 We first treat the case $p=1$ to illustrate the idea, then deal with the more delicate case $p>1$.

 \vspace{8pt}

{\it Proof of Theorem \ref{adm-puzzle} when $p=1$.} 
 First note that 
 $$\tau^{-1}\Big(\Big\{\frac{1}{7},\frac{2}{7},\frac{4}{7}\Big\}\Big)=\Big\{\frac{1}{7},\frac{2}{7},\frac{4}{7}\Big\}\cup\Big\{\frac{1}{14},\frac{9}{14},\frac{11}{14}\Big\},$$
  $$\tau^{-1}\Big(\Big\{\frac{3}{7},\frac{5}{7},\frac{6}{7}\Big\}\Big)=\Big\{\frac{3}{7},\frac{5}{7},\frac{6}{7}\Big\}\cup\Big\{\frac{3}{14},\frac{5}{14},\frac{13}{14}\Big\}.$$

 We first  assume that the graph $\Gamma=\Gamma_{c,a}(\frac{1}{7})\cup\Gamma_{c,a}(\frac{3}{7})$ avoids the orbit of $-c$, and   $\Gamma\cap J(f_{c,a})$ contains no parabolic point. Figure 7 will be helpful to understand the proof. We first assert that either we are in
  {\it Case 1:}
 $${\rm orb}(-c)\cap S^*_c\Big(\frac{9}{14},\frac{5}{14}\Big)\neq \emptyset,$$
 or in {\it Case 2:} we can find a component $Q$ of $f_{c,a}^{-1}(X_{c,a}\setminus \Gamma)$,  a component $P$ of $X_{c,a}\setminus\Gamma$, satisfying that
 $${\rm orb}(-c)\cap Q\neq \emptyset, \ \overline{Q}\subset P.$$

  To see this, note that $\mathbb{C}=S_c(\frac{5}{14},\frac{9}{14})\cup S^*_c(\frac{9}{14},\frac{5}{14})$. If ${\rm orb}(-c)\cap S^*_c(\frac{9}{14},\frac{5}{14})=\emptyset$, then we have ${\rm orb}(-c)\subset S_c(\frac{5}{14},\frac{9}{14})$. Let $Q$ be a component of 
  $f_{c,a}^{-1}(X_{c,a}\setminus \Gamma)$ so that $Q\cap {\rm orb}(-c)\neq \emptyset$, and let $P$ be the component of $X_{c,a}\setminus \Gamma$ that contains $Q$. 
Clearly $Q\subset  S_c(\frac{5}{14},\frac{9}{14})$.  We claim $\overline{Q}\subset P$. 
In fact, if it is not true, then
 $\partial Q\cap\overline{U_{c,a}(c)}\neq\emptyset$. This would imply that $f_{c,a}(Q)\subset  S^*_c(\frac{9}{14},\frac{5}{14})$, and further ${\rm orb}(-c)\cap S^*_c(\frac{9}{14},\frac{5}{14})\neq\emptyset$, leading to a contradiction.  This shows that we are in Case 2, in which case $\Gamma$ is $1$-admissible and the proof is done.

  \begin{figure}[h]
\begin{center}
\includegraphics[height=10cm]{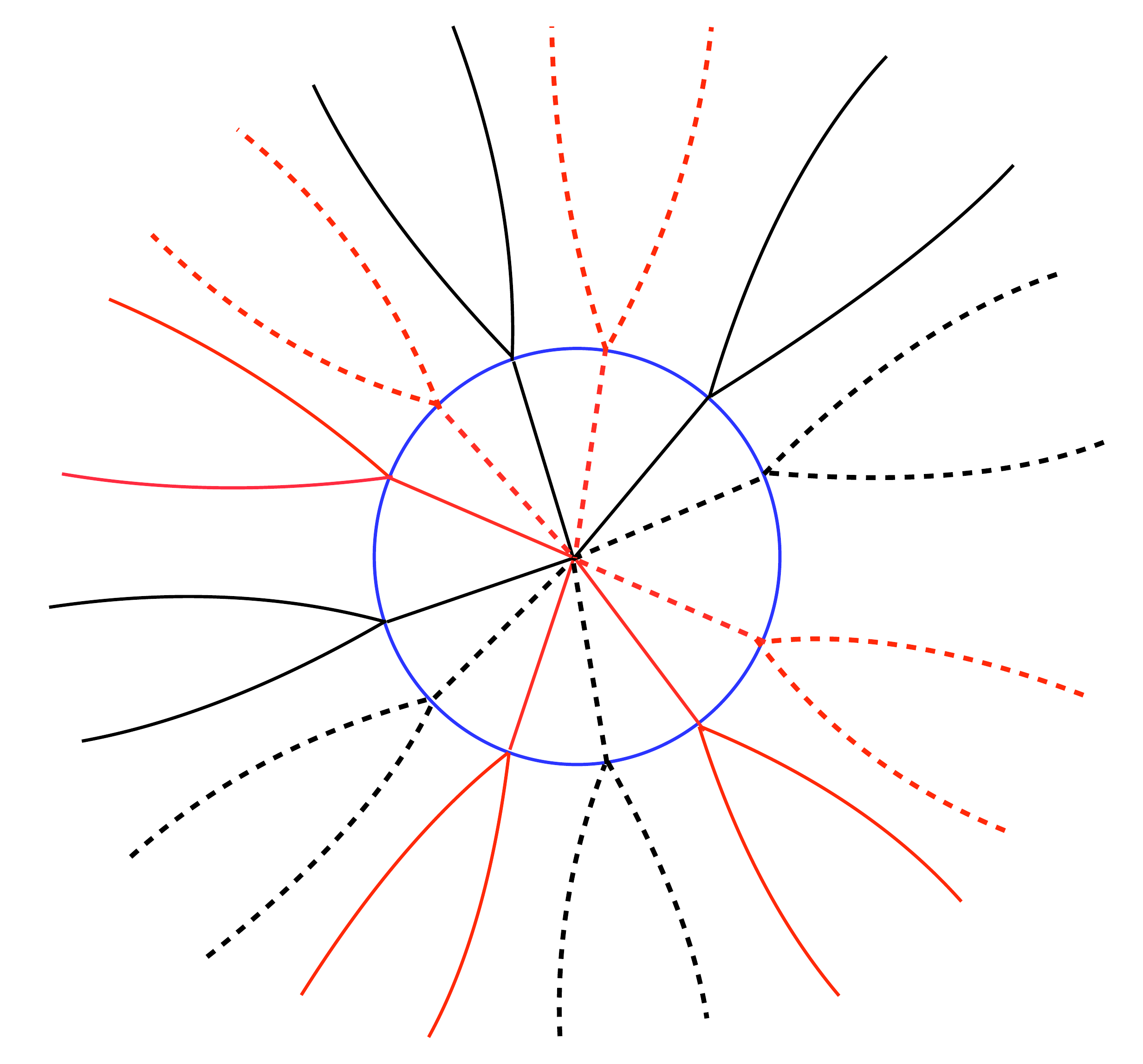}
\put(-113,121){$\frac{13}{14}$}  \put(-113,143){$\frac{1}{14}$} \put(-120,164){$\frac{1}{7}$} \put(-144,175){$\frac{3}{14}$}
 \put(-164,178){$\frac{2}{7}$} \put(-183,172){$\frac{5}{14}$} \put(-195,157){$\frac{3}{7}$} \put(-202,125){$\frac{4}{7}$}
 \put(-195,106){$\frac{9}{14}$} \put(-178,90){$\frac{5}{7}$}   \put(-157,85){$\frac{11}{14}$} \put(-133,90){$\frac{6}{7}$}
 \caption{The graphs $\gamma_{c,a}(\frac{1}{7})$ (black), $\gamma_{c,a}(\frac{3}{7})$ (red) and their first $f^p_{c,a}$-preimages (dashed black, dashed red, respectively) near $c$. Here $U_{c,a}(c)$ is bounded by blue curve.}
\end{center}\label{f5}
\end{figure}

  We need further discuss  
  Case 1. In this case, we have 
  $$f_{c,a}^k(-c)\in S^*_c\Big(\frac{9}{14},\frac{1}{14}\Big)\cup S^*_c\Big(\frac{1}{14},\frac{5}{14}\Big)$$
  for some integer $k\geq 0$.
  If $f_{c,a}^k(-c)\in S^*_c(\frac{9}{14},\frac{1}{14})$, then let's consider the graph $\Gamma_{c,a}(\frac{1}{7})$. There is a component $Q$ of   $f_{c,a}^{-1}(X_{c,a}\setminus \Gamma_{c,a}(\frac{1}{7}))$ and a component $P$ of $X_{c,a}\setminus \Gamma_{c,a}(\frac{1}{7})$ satisfying that 
  $$f_{c,a}^k(-c)\in Q\subset P.$$
  In this case, the boundaries of $Q$ and $P$ will not touch (see Figure), therefore $\overline{Q}\subset P$,
  implying that  $\Gamma_{c,a}(\frac{1}{7})$ is 1-admissible.
  If $f_{c,a}^k(-c)\in S^*_c(\frac{1}{14},\frac{5}{14})$, with the similar argument, we  see that the graph 
  $\Gamma_{c,a}(\frac{3}{7})$ is 1-admissible.

 Finally, we treat the rest cases. Write $V=U_{c,a}(c)$.
    
If $\Gamma_{c,a}(\frac{1}{7})\cap {\rm orb}(-c)\neq \emptyset$ or $\zeta(V,{\frac{1}{7}})$ (recall that it is the landing point of $R_{c,a}^V(\frac{1}{7})$) is parabolic,
 then $\Gamma_{c,a}(\frac{3}{7})$ is a puzzle.
 In the former case, one has $\zeta(V,{\frac{1}{7}})=f_{c,a}^{k}(-c)$ for some $k\geq 0$. In the latter case, 
 let $W$ be a parabolic basin so that $\zeta(V,{\frac{1}{7}})\in\partial W\cap \partial V$, then  $f_{c,a}^{k}(-c)\in W$ for some $k\geq 0$.  
  In either case,
there is a component of $f_{c,a}^{-1}(X_{c,a}\setminus \Gamma_{c,a}(\frac{3}{7}))$, say 
 $Q$, containing $f_{c,a}^{k}(-c)$, and a component $P$ of $X_{c,a}\setminus \Gamma_{c,a}(\frac{3}{7})$ containing $\overline{Q}$. 
  This implies that $\Gamma_{c,a}(\frac{3}{7})$ is 1-admissible.

 If $\Gamma_{c,a}(\frac{3}{7})\cap {\rm orb}(-c)\neq \emptyset$ or $\zeta(V,{\frac{3}{7}})$  is parabolic, by similar argument as above, we see that $\Gamma_{c,a}(\frac{1}{7})$ is 1-admissible.

  This completes the proof in the case $p=1$.   
  \hfill\fbox
  
  \vspace{6pt}
  
  To deal with the case $p>1$, we first prove the following  fact:
  
  \begin{lem} \label{compact-in} Assume $p>1$.
   Let $q=f_{c,a}^k(c)$ for some $0\leq k<p$ and let $\Gamma$ be one of the graphs
  $$\Gamma_{c,a}\Big(\frac{1}{7}\Big),\ \Gamma_{c,a}\Big(\frac{3}{7}\Big), \ 
  \Gamma_{c,a}\Big(\frac{1}{7}\Big)\cup\Gamma_{c,a}\Big(\frac{3}{7}\Big).$$ 
  Let $Q$ be a component of $f_{c,a}^{-p}(X_{c,a}\setminus\Gamma)$.  
    Suppose that ${Q}\subset f^p_{c,a}(Q)$ and $\overline{Q} \cap \overline{U_{c,a}(q)}\neq \emptyset$.
  Then we have 
  $$\overline{Q}\subset f^p_{c,a}(Q).$$
  \end{lem}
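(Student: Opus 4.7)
The plan is a proof by contradiction: assume $\overline{Q} \not\subset P := f^p_{c,a}(Q)$, so there is some $z \in \partial Q \cap \partial P$, and contradict this using the hypotheses $Q \subset f^p_{c,a}(Q)$ and $\overline{Q} \cap \overline{V} \ne \emptyset$ (where $V := U_{c,a}(q)$).

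The first step is to show $Q \cap V = \emptyset$. If $Q$ intersected $V$, then $Q \cap V$ would be a single sector of $V$ bounded by two internal rays at depth-$p$ B\"ottcher angles $(\theta_1, \theta_2)$, contained in the depth-$0$ sector $P \cap V =: (\alpha_1, \alpha_2)$. Since $f^p_{c,a}|_V$ has degree $2$ and acts as angle doubling in the B\"ottcher coordinate centered at $q$, the image $f^p(Q \cap V)$ is the sector $(2\theta_1, 2\theta_2)$, which must lie in $P \cap V = (\alpha_1, \alpha_2)$ because $f^p(Q) = P$. A direct case-by-case check eliminates this: doubling acts on the $\Gamma$-B\"ottcher-angle set $\{1/7, 2/7, 4/7\}$ as the $3$-cycle $1/7 \to 2/7 \to 4/7 \to 1/7$, similarly on $\{3/7, 5/7, 6/7\}$, and on their union as the disjoint pair of these $3$-cycles; in each case the doubled image of any depth-$p$ sub-sector of any depth-$0$ sector always lands in a \emph{different} depth-$0$ sector. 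Hence $Q \cap V = \emptyset$.

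The hypothesis $\overline{Q} \cap \overline{V} \ne \emptyset$ then forces a touching point $w \in \partial Q \cap \partial V$, reached by some external ray $R^\infty_{c,a}(\beta) \subset f^{-p}(\Gamma) \cap \partial Q$ landing at $w$. To rule out the existence of $z \in \partial Q \cap \partial P$, we track the forward orbit $\{f^{np}_{c,a}(z)\}$, which lies in the compact set $\partial P$ by properness of $f^p \colon \overline{Q} \to \overline{P}$, and note that the super-attracting center $q$ lies outside $X_{c,a}$ (hence outside $\overline{P}$), so the orbit cannot accumulate on $q$. If $z \in \partial V$, the B\"ottcher angle $\theta_0$ of $z$ is forced to be a $\Gamma$-angle by $z \in \partial P$, and combining the doubling orbit of $\theta_0$ on $\partial V$ with the absence of invariant sub-sectors from Step~1 yields a contradiction. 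If $z \notin \partial V$, iterating $f^p$ together with the proximity hypothesis $\overline{Q} \cap \overline{V} \ne \emptyset$ forces the orbit of $z$ to approach $\partial V$, reducing to the previous case.

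The hard part will be the case where $z$ is a depth-$0$ landing on $\partial V$: locally the depth-$p$ graph $f^{-p}(\Gamma)$ and the depth-$0$ graph $\Gamma$ coincide there, since no new external rays from $f^{-p}(\Gamma) \setminus \Gamma$ can land at a given repelling periodic point beyond those already in $\Gamma$. In this case the contradiction must come from a global dynamical argument: the hypothesis $Q \subset f^p(Q)$ sets up a self-similar renormalization-type configuration for the boundary arcs of $Q$ under $f^p$, and this configuration is incompatible with the non-existence of $f^p$-invariant sub-sectors of $V$ established in Step~1.
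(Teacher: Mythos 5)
Your proposal fails at Step 1, and in a way that inverts the key combinatorial fact of the lemma. You assert that, for each of the three candidate graphs, doubling a depth-$p$ sub-sector of a depth-$0$ sector always lands in a \emph{different} depth-$0$ sector, and conclude from this that $Q\cap V=\emptyset$. That assertion is false, and its failure is precisely the phenomenon the lemma is built on. For $\Gamma=\Gamma_{c,a}(\tfrac17)$ the depth-$0$ angles are $\{\tfrac17,\tfrac27,\tfrac47\}$ and the depth-$p$ angles add $\{\tfrac1{14},\tfrac9{14},\tfrac{11}{14}\}$; the depth-$p$ sub-sector $(\tfrac{11}{14},\tfrac1{14})$ lies compactly inside the depth-$0$ sector $(\tfrac47,\tfrac17)$, and doubling maps $(\tfrac{11}{14},\tfrac1{14})$ onto $(\tfrac47,\tfrac17)$ --- the \emph{same} depth-$0$ sector. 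The analogous computation produces $(\tfrac{13}{14},\tfrac3{14})\subset(\tfrac67,\tfrac37)$ for $\Gamma_{c,a}(\tfrac37)$, and $(\tfrac{13}{14},\tfrac1{14})\subset(\tfrac67,\tfrac17)$ for the union graph; in each case the surviving sector is the one whose angular interval contains the doubling-fixed angle $0$. Your ``case-by-case check'' was simply not carried out.

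The paper's proof runs in the opposite direction. From the hypotheses one first rules out the possibility that $\overline Q\cap\overline V$ is a singleton: such a touching point $\zeta$ would have to satisfy $f_{c,a}^p(\zeta)=\zeta$, yet every boundary touching point of the graphs sits on an internal ray whose angle belongs to $\{\tfrac17,\tfrac27,\tfrac47\}$ or $\{\tfrac37,\tfrac57,\tfrac67\}$, each of period $3$ under doubling, so $\zeta$ has period $3$ under $f_{c,a}^p$. Hence $Q\cap V\neq\emptyset$ --- the exact opposite of your Step~1 conclusion. One then sees that $P\cap V$ must be the $0$-containing sector $(\alpha_q,\beta_q)$, and the compact containment of the invariant angular sub-sector inside $(\alpha_q,\beta_q)$ is what yields $\overline Q\subset f_{c,a}^p(Q)$ when $\nu(Q)=1$, with the multi-basin case $\nu(Q)\geq2$ handled by repeating the argument at each touching basin. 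Since your Steps~2 and~3 build entirely on the false Step~1 conclusion (and you yourself flag that the ``depth-$0$ landing'' case would need an unsupplied global argument), the proposal does not recover and would need to be rebuilt around the presence, not absence, of the invariant sub-sector.
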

  
  Before the proof, we need a notation. 
  For any integer $k\geq 0$, any component $P$ of $f_{c,a}^{-k}(X_{c,a}\setminus\Gamma)$, 
  let $\nu(P)$ be the number of attracting basins $V\in \mathcal B_{c,a}$ whose boundary touches $\overline{P}$:
  $$\nu(P)=|\{V\in \mathcal B_{c,a}; \ \overline{P}\cap \overline{V}\neq\emptyset\}|.$$
  
   \begin{figure}[h]
\begin{center}
\includegraphics[height=6.5cm]{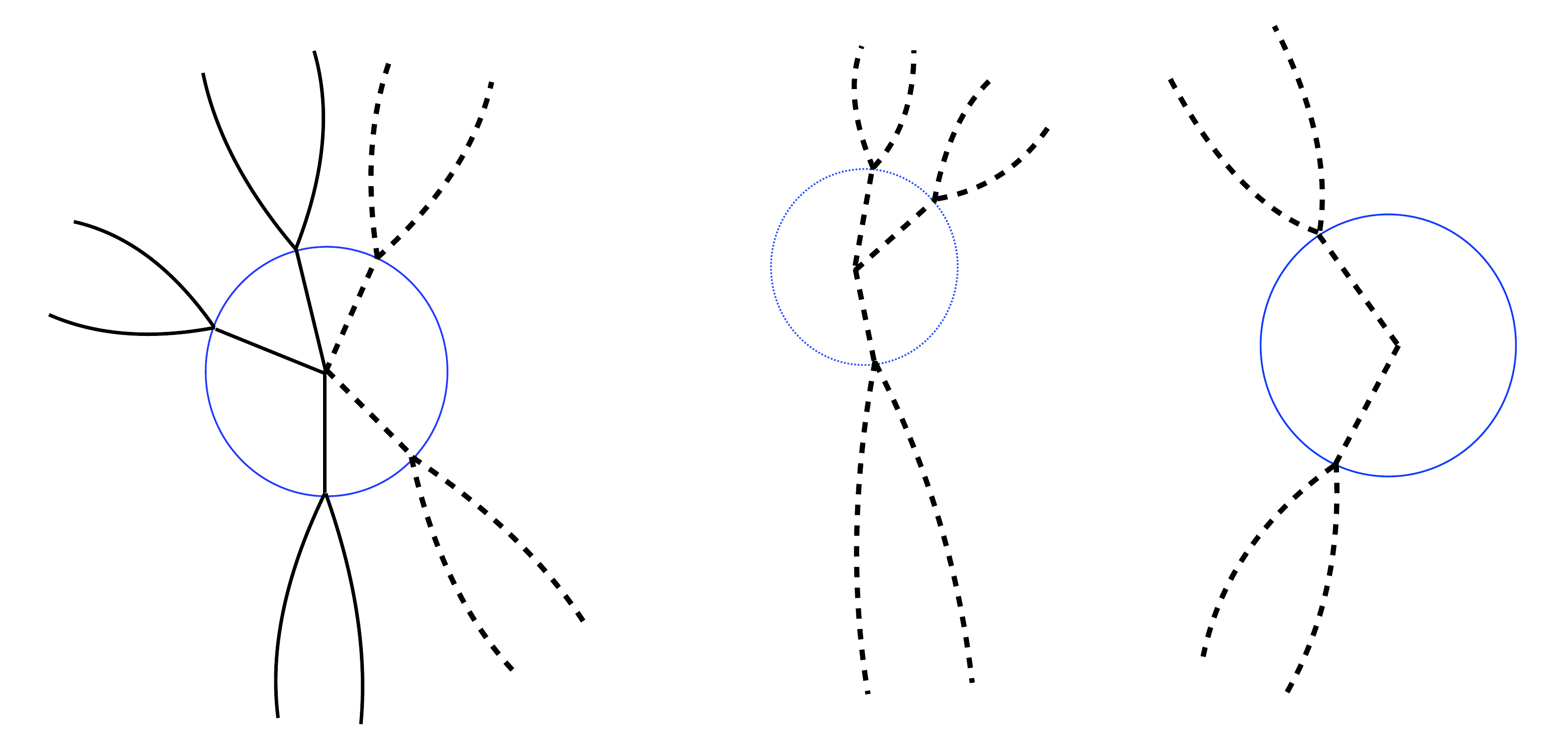}
\put(-315,88){$q$} \put(-310,115){$\frac{1}{7}$} \put(-295,111){$\frac{1}{14}$}
  \put(-305,72){$\frac{4}{7}$} \put(-290,82){$\frac{11}{14}$}  \put(-232,100){$Q$}
   \put(-270,42){$Q_1$}  \put(-170,42){$Q_2$}

 \caption{A possible graph structure, $q=f^k_{c,a}(c)$.  Here $\nu(Q)=\nu(Q_1)=1, \ \nu(Q_2)=0$.
 Dashed rays are first pre-image of the dynamical rays defining the graph. Equipotential curves are not included.}
\end{center}\label{f5}
\end{figure}

  \begin{figure}[h]
\begin{center}
\includegraphics[height=7cm]{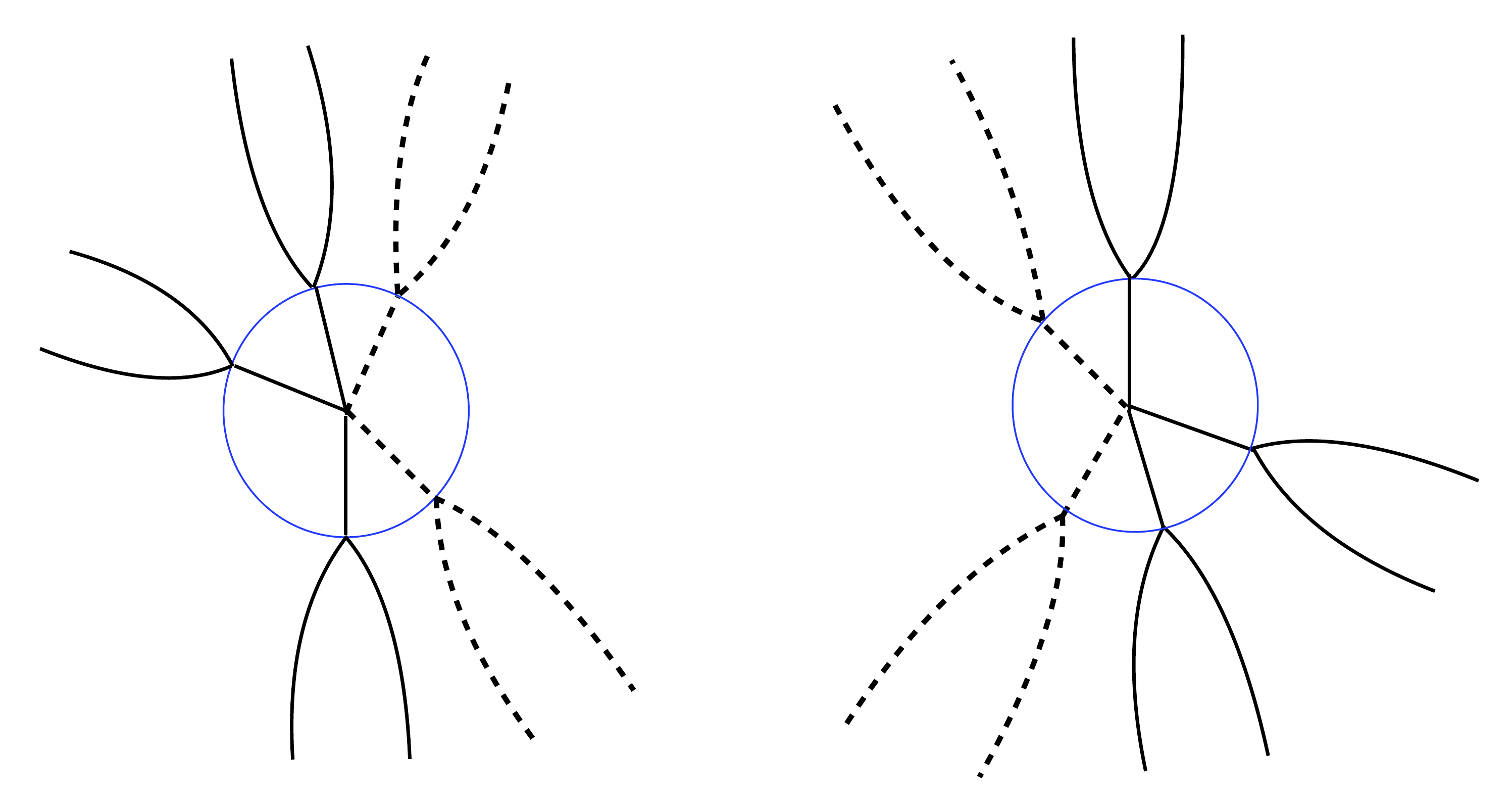}
\put(-93,119){$\frac{4}{7}$} \put(-92,100){$q'$} \put(-111,117){$\frac{11}{14}$} \put(-105,74){$\frac{1}{14}$} 
\put(-85,74){$\frac{1}{7}$} \put(-295,93){$q$} \put(-290,117){$\frac{1}{7}$} \put(-277,114){$\frac{1}{14}$}
  \put(-295,75){$\frac{4}{7}$} \put(-282,75){$\frac{11}{14}$}  \put(-192,100){$Q$}
 \caption{A possible graph structure, $q\neq q'$ and $q=f^k_{c,a}(c)$, $q'=f^l_{c,a}(c)$.
 Here $\nu(Q)=2$. Dashed rays are first pre-image of the dynamical rays defining the graph. Equipotential curves are not included. 
 }
\end{center}\label{f5}
\end{figure}

  \vspace{6pt}
 
 {\it Proof of Lemma \ref{compact-in}.}  
 By the graph structure, the assumption $\overline{Q} \cap \overline{U_{c,a}(q)}\neq \emptyset$ implies that either
  $\overline{Q} \cap \overline{U_{c,a}(q)}$ is a singleton or $Q\cap U_{c,a}(q)\neq \emptyset$. 
  Another assumption ${Q}\subset f^p_{c,a}(Q)$ implies that the former case is impossible. To see this, suppose $\overline{Q} \cap \overline{U_{c,a}(q)}=\{\zeta\}$, then 
  $\overline{f^p_{c,a}(Q)}\cap \overline{U_{c,a}(q)}$ is also a singleton, say $\{\zeta'\}$. 
  From ${Q}\subset f^p_{c,a}(Q)$, we see that $\zeta=\zeta'$, equivalently $f_{c,a}^p(\zeta)=\zeta$. But this is a contradiction, because $\zeta$ is of period three under the map $f_{c,a}^p$.
  
  Now we know that $Q\cap U_{c,a}(q)\neq \emptyset$, which implies that $ f^p_{c,a}(Q)\cap U_{c,a}(q)\neq \emptyset$.
  Note that $f^p_{c,a}(Q)\subset S_q(\alpha_q, \beta_q)$, where $(\alpha_q, \beta_q)$ has three choices 
  $(\frac{1}{7},\frac{2}{7})$, $(\frac{2}{7},\frac{4}{7})$, $(\frac{4}{7},\frac{1}{7})$ if $\Gamma=\Gamma_{c,a}(\frac{1}{7})$; three choices $(\frac{3}{7},\frac{5}{7}), (\frac{5}{7},\frac{6}{7}), (\frac{6}{7},\frac{3}{7})$ if $\Gamma=\Gamma_{c,a}(\frac{3}{7})$; six choices $(\frac{1}{7},\frac{2}{7}), (\frac{2}{7},\frac{3}{7}), (\frac{3}{7},\frac{4}{7}), (\frac{4}{7},\frac{5}{7}), (\frac{5}{7},\frac{6}{7}), (\frac{6}{7},\frac{1}{7})$
  if $\Gamma=\Gamma_{c,a}(\frac{1}{7})\cup \Gamma_{c,a}(\frac{3}{7})$.

 The fact $Q\subset f^p_{c,a}(Q)$  implies that there is a component $C$ of $\tau^{-1}((\alpha_q, \beta_q))$ that is contained in $(\alpha_q, \beta_q)$. This situation can happen only if 
   $$(\alpha_q, \beta_q)=\begin{cases} (\frac{4}{7}, \frac{1}{7}), &\text{ if } \Gamma=\Gamma_{c,a}(\frac{1}{7}), \\
 (\frac{6}{7}, \frac{3}{7}), &\text{ if } \Gamma=\Gamma_{c,a}(\frac{3}{7}), \\
  (\frac{6}{7}, \frac{1}{7}), &\text{ if } \Gamma=\Gamma_{c,a}(\frac{1}{7})\cup\Gamma_{c,a}(\frac{3}{7}).
\end{cases}$$
  In either case, one may verify that $\overline{C}\subset(\alpha_q, \beta_q)$. This implies that if $\nu(Q)=1$, then $\overline{Q}\subset f^p_{c,a}(Q)$ and the proof is done (see Figure 8).
  
  If $\nu(Q)\geq 2$ (see Figure 9), then for any $q'=f_{c,a}^l(c)$ with $q'\neq q$ and $\overline{Q} \cap \overline{U_{c,a}(q')}\neq \emptyset$,  
by the same argument as above, we see that $Q\cap U_{c,a}(q')\neq\emptyset$ and $\overline{Q\cap U_{c,a}(q')}\subset f_{c,a}^p(Q)$. Therefore in this case,
 $\overline{Q}\subset f^p_{c,a}(Q).$
\hfill\fbox

\vspace{8pt}

{\it Proof of Theorem \ref{adm-puzzle} when $p>1$.} Let $\Gamma=\Gamma_{c,a}(\frac{1}{7})\cup \Gamma_{c,a}(\frac{3}{7})$.
We first assume that $\Gamma\cap {\rm orb}(-c)=\emptyset$  and $\Gamma\cap J(f_{c,a})$ contains no parabolic point.

Let $Q$ be a component of $f_{c,a}^{-p}(X_{c,a}\setminus\Gamma)$ so that ${\rm orb}(-c)\cap Q\neq \emptyset$, and let
$P$ be the component of $X_{c,a}\setminus\Gamma$ containing $Q$.

\textbf{Case 1. $\nu(Q)=0$.} This is equivalent to say that $\partial Q\cap \partial \mathcal{A}_{c,a}=\emptyset$.
Therefore $\partial Q \subset f_{c,a}^{-p}(\Gamma)\setminus \Gamma$ and $\overline{Q}\subset P$.

\textbf{Case 2. $\nu(Q)\geq 2$.}  Note that $\overline{V}\cap\overline{Q}\neq \emptyset
\Longrightarrow  \overline{V}\cap\overline{f_{c,a}^p(Q)}\neq \emptyset$ for any $V\in \mathcal B_{c,a}$.
Therefore $\nu(f_{c,a}^p(Q))\geq \nu(Q)\geq 2$. 
Clearly, 
both $\overline{f_{c,a}^p(Q)}$ and $\overline{P}$   intersect those $\overline{V}$'s with 
$V\in\mathcal B_{c,a}$ and $\overline{V}\cap \overline{Q}\neq\emptyset$.
Since there is only one component of $X_{c,a}\setminus\Gamma$ satisfying this property, we have $f_{c,a}^p(Q)=P$. Then 
by Lemma \ref{compact-in}, we get $\overline{Q}\subset P$.
 
\textbf{Case 3. $\nu(Q)=1$.} Let $q\in \{f_{c,a}^k(c); 0\leq k<p\}$ be the unique point with $\overline{Q}\cap\overline{U_{c,a}(q)}\neq\emptyset$. Note that
$$S^*_q\Big(\frac{9}{14},\frac{1}{14}\Big)\cup S^*_q\Big(\frac{1}{14},\frac{5}{14}\Big)\cup S_q\Big(\frac{5}{14},\frac{9}{14}\Big)=\mathbb{C}.$$

\textbf{Case 3.1. $Q\subset S^*_q(\frac{9}{14},\frac{1}{14})$.} Let $Q'$ be the component of
$f_{c,a}^{-p}(X_{c,a}\setminus \Gamma_{c,a}(\frac{1}{7}))$ containing $Q$, and $P'$ the component of
$X_{c,a}\setminus \Gamma_{c,a}(\frac{1}{7})$ containing $Q'$. Clearly, 
$$\nu(P')\geq \nu(Q')\geq \nu(Q)=1.$$
If $\nu(Q')\geq 2$, then by the same argument as  Case 2, we have  $\overline{Q'}\subset P'$.
If   $\nu(Q')=1$, the fact $\overline{Q'\cap U_{c,a}(q)}\subset P'$ (see Figure 8) implies that $\overline{Q'}\subset P'$.
In either case, the graph $\Gamma_{c,a}(\frac{1}{7})$ is a $p$-admissible puzzle.

\textbf{Case 3.2. $Q\subset S^*_q(\frac{1}{14},\frac{5}{14})$.} Let $Q'$ be the component of
$f_{c,a}^{-p}(X_{c,a}\setminus \Gamma_{c,a}(\frac{3}{7}))$ containing $Q$, and $P'$ the component of
$X_{c,a}\setminus \Gamma_{c,a}(\frac{3}{7})$ containing $Q'$. 
Similarly 
as Case 3.1, we have   $\overline{Q'}\subset P'$, implying that $\Gamma_{c,a}(\frac{3}{7})$ is a $p$-admissible puzzle.

\textbf{Case 3.3. $Q\subset S_q(\frac{5}{14},\frac{9}{14})$.} In this case, $f_{c,a}^p(Q)$ is a component of $X_{c,a}\setminus\Gamma$ 
satisfying that
$$f_{c,a}^p(Q)\subset S^*_q\Big(\frac{9}{14},\frac{1}{14}\Big)\cup S^*_q\Big(\frac{1}{14},\frac{5}{14}\Big) \text{ and } \nu(f_{c,a}^p(Q))\geq 1.$$
Note that ${\rm orb}(-c)\cap f_{c,a}^p(Q)\neq \emptyset$. There is a component $Q''$ of  $f_{c,a}^{-p}(X_{c,a}\setminus \Gamma)$
satisfying that $Q''\subset f_{c,a}^p(Q)$ and ${\rm orb}(-c)\cap Q''\neq \emptyset$. 
Clearly either $Q''\subset S^*_q(\frac{9}{14},\frac{1}{14})$ or $Q''\subset S^*_q(\frac{1}{14},\frac{5}{14})$,
meaning that we are again in Case 3.1 or Case 3.2. 
 With the same argument, we see that $\Gamma_{c,a}(\frac{1}{7})$ or $\Gamma_{c,a}(\frac{3}{7})$ is $p$-admissible.
 
\textbf{Rest Cases.} 
Finally, we handle the rest cases:  $\Gamma\cap {\rm orb}(-c)\neq\emptyset$
 or  $\Gamma\cap J(f_{c,a})$ contains a parabolic point.

Suppose that $\Gamma_{c,a}(\frac{1}{7})\cap {\rm orb}(-c)\neq\emptyset$  or  $\Gamma_{c,a}(\frac{1}{7})\cap J(f_{c,a})$ contains a parabolic point. Let $V=U_{c,a}(c)$, then the landing point $\zeta(V,\frac{1}{7})$ of $R_{c,a}^V(\frac{1}{7})$ is either $f_{c,a}^k(-c)$ for some $k\geq 1$, or parabolic.
 In the latter case,  $\zeta(V,\frac{1}{7})$ is on the boundary of some parabolic basin $W$, which contains $f_{c,a}^k(-c)$ 
 for some $k$ (here we use the same $k$ because the two cases can not happen simultaneously).  In either case, let $Q$ be the component of $f_{c,a}^{-p}(X_{c,a}\setminus \Gamma_{c,a}(\frac{3}{7}))$ containing $f_{c,a}^k(-c)$, and let  $P$ be the component of $X_{c,a}\setminus \Gamma_{c,a}(\frac{3}{7})$ containing ${Q}$.
 The fact ${Q}\cap V\neq \emptyset$ implies that  $\nu(Q)\geq 1$.
   If $\nu(Q)\geq 2$, then by the same argument as in Case 2, we have  $\overline{Q}\subset P$.
 If  $\nu(Q)=1$, note that $\overline{Q\cap V}\subset P$, we also have $\overline{Q}\subset P$.
  Therefore $\Gamma_{c,a}(\frac{3}{7})$ is $p$-admissible.

The last cases are $\Gamma_{c,a}(\frac{3}{7})\cap {\rm orb}(-c)\neq\emptyset$  or  $\Gamma_{c,a}(\frac{3}{7})\cap J(f_{c,a})$ contains a parabolic point.
 Similarly  as above,  we have that $\Gamma_{c,a}(\frac{1}{7})$ is $p$-admissible. 
 
The proof of the theorem is completed.  \hfill\fbox

 \section{Rigidity via puzzles}  \label{rigidity-puzzle}

  This section is devoted to proving the {combinatorial rigidity} for maps on $\mathcal S_p$. 
 Rigidity is one of the most remarkable phenomena in holomorphic dynamics. One of its applications is
  to study the boundaries of hyperbolic components in the next section. 
  To simplify notations, write
  $$f=f_{c,a}, \  \tilde{f}=f_{\tilde{c},\tilde{a}},\ X=X_{c,a}, \ c^*=-c. $$
Here, $X_{c,a}$ is defined in Section \ref{cubic-graph}. The corresponding objects (graphs, puzzles, tableau, etc) for $\tilde{f}$ are marked with tilde.

In this section, we assume $f,\tilde f\in \mathcal{C}_0(\mathcal S_p)$. Let's take a $p$-admissible puzzle $\Gamma$ for $f$, given by Theorem \ref{adm-puzzle}. The puzzle pieces and tableau (in particular $T_{f}(c^*)$) are induced by $\Gamma$. 
Write $\Gamma_k=f^{-k}(\Gamma)$ for $k\geq 0$, the collection $\mathcal P_k$ of puzzle pieces of depth $k$ consists of the connected components of $\mathbf P_{k}:=f^{-k}(X-\Gamma)$. 
 Let $\widetilde \Gamma$ be the graph of $\tilde f$ with the same structure as $\Gamma$.

We first define the {\it combinatorial equivalence} between $f$ and $\tilde f$.
 Roughly speaking, it means that the two maps have the same puzzle structures at any depth. Rigorous definition goes as follows.
Let $\phi: \Gamma\rightarrow\widetilde \Gamma$ be a homeomorphism, written as the identity map in the B\"ottcher coordinates. For an integer $k\geq 1$,
we say that  $f$ and $\tilde f$ have the {\it same combinatorics up to depth $k$}, if there is a homeomorphism $\phi_k: \Gamma_k\rightarrow \widetilde \Gamma_k$  so that $\tilde f\circ \phi_k=\phi\circ f$ on $\Gamma_k$ and $\phi_{k}|_{\Gamma_k\cap \Gamma}=\phi|_{\Gamma_k\cap \Gamma}$.  We say that $f$ and $\tilde f$ are {\it combinatorially equivalent} if they have the same combinatorics up to any depth. If furthermore $\phi$ can be extended to a quasi-conformal map $\Phi:\mathbb C\rightarrow \mathbb C$, we say that $f$ and $\tilde f$ are {\it qc-combinatorially equivalent}. Combinatorial equivalence allows one to extend $\phi$ as a homeomorphism 
$$\phi: \bigcup_{k\geq 0}\Gamma_k\rightarrow \bigcup_{k\geq 0}\widetilde\Gamma_k$$
by setting $\phi|_{\Gamma_k}=\phi_k|_{\Gamma_k}$ for any $k$. Further, $\phi$ induces a bijection $\phi_*$ between puzzle pieces:
$$\phi_*: \bigcup_{k\geq 0}\mathcal P_k\rightarrow \bigcup_{k\geq 0}\widetilde{\mathcal P}_k,$$
here $\phi_*(P_k)$ is defined to be the puzzle piece of $\tilde f$ bounded by $\phi(\partial P_k)$.

  \begin{thm}\label{c-rigidity}
If $f, \tilde f\in \mathcal C_0(\mathcal{S}_p)$ are qc-combinatorially equivalent and $T_{f}(c^*)$  is aperiodic,
 then $f=\tilde f$.
   \end{thm}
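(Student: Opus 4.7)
The goal is to upgrade the qc-combinatorial equivalence $\Phi$ to an honest conformal conjugacy $\mathbb C \to \mathbb C$. Since $f$ and $\tilde f$ are both monic and centered cubic polynomials and $\Phi$ is the identity in the B\"ottcher coordinate at $\infty$, any global conformal conjugacy fixing $\infty$ with derivative $1$ must itself be the identity, forcing $f = \tilde f$. Following the general rigidity template of \cite{KSS,AKLS,KS}, I would first arrange that $\Phi$ is holomorphic on the Fatou set. Because $f,\tilde f \in \mathcal C_0(\mathcal S_p)$, the only critical point whose orbit is not absorbed by $\mathcal A_{c,a}$ is $c^*=-c$. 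Using the B\"ottcher coordinate at $\infty$, and on each periodic attracting component $V \in \mathcal B_{c,a}$ the parameterization of Theorem \ref{parameterization} (pulled back through the dynamics to all strictly pre-periodic Fatou components), $\Phi$ can be modified to a qc-map $\Phi_0$ which is holomorphic on the Fatou set of $f$, still agrees with $\Phi$ on $\bigcup_{k\geq 0}\Gamma_k$, and still conjugates $f$ to $\tilde f$ there.

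\textbf{Controlled critical nest.} The heart of the proof is to build a nest $P^0 \supset P^1 \supset \cdots$ of critical puzzle pieces around $c^*$ with $\bigcap_n \overline{P^n}=\{c^*\}$ and with nondegenerate moduli $\mathrm{mod}(P^n\setminus \overline{P^{n+1}})\geq \mu_n>0$. The $p$-admissible puzzle $\Gamma$ produced by Theorem \ref{adm-puzzle} provides the initial annulus $P_{d_{c^*}+p}(c^*)\Subset P_{d_{c^*}}(c^*)$. The aperiodicity of $T_f(c^*)$ rules out any polynomial-like renormalization around $c^*$, so the principal/KSS nest can be constructed indefinitely. I would then split into the standard three sub-cases according to the tableau of $c^*$: if $T_f(c^*)$ is non-recurrent, Branner-Hubbard tableau arguments produce summable, exponentially decaying moduli; if reluctantly recurrent, a thickening argument absorbs the finitely many critical returns; if persistently recurrent, the Kahn-Lyubich covering lemma \cite{KL1,KL2} yields a beau lower bound $\mu_n\geq \mu>0$ and in particular local connectivity of $J(f)$ at $c^*$. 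In each case the nest collapses to $\{c^*\}$, and by combinatorial equivalence the bijection $\phi_*$ transports $(P^n)$ to a matching nest $(\widetilde P^n)$ around $\tilde c^*$.

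\textbf{Promotion to conformality and main obstacle.} Given this matched nest, a standard spreading and pullback argument combined with the QC-criterion of \cite{KSS,AKLS} upgrades $\Phi_0$ to a qc-conjugacy whose Beltrami coefficient is supported on a set of Lebesgue measure zero, namely $\bigcap_n P^n$ together with its countable grand orbit. Together with the holomorphicity on the Fatou set from Step 1 and Weyl's lemma, this produces a global conformal conjugacy, whence $f=\tilde f$ by the normalization remark above. The main obstacle is the nest construction in this setting: the boundary of the attracting cycle $\mathcal A_{c,a}$ can touch the puzzle pieces in the critical nest (compare Lemma \ref{compact-in}), so the purely polynomial-like machinery of \cite{KL2,KSS,AKLS} does not apply verbatim. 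The real work is the combinatorial bookkeeping of the position of $\mathrm{orb}(c^*)$ relative to both $\Gamma_k$ and $\partial \mathcal A_{c,a}$, so that the covering-lemma modulus estimates go through in the present rational-like, multi-Fatou-component regime.
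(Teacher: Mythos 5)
Your high-level strategy tracks the paper closely, and your identification of the real obstruction (the interaction between the critical nest and $\partial\mathcal A_{c,a}$, handled by Theorem \ref{adm-puzzle}) is on target. However, there is a genuine gap in the persistently recurrent case, and it occurs precisely at the step you compress into ``a standard spreading and pullback argument.''

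The claim that the resulting qc-conjugacy has Beltrami coefficient supported on $\bigcap_n P^n=\{c^*\}$ together with its countable grand orbit does not follow from the pullback construction. In the persistently recurrent case one builds qc maps $H_j$ that are conformal on the Fatou set and on the (hyperbolic, measure-zero) residual set $\bigcap_k f^{-k}(J(f)\setminus Y_j(c^*))$, but inside the grand orbit of the level-$j$ critical piece $Y_j(c^*)$ the map $H_j$ inherits the dilatation of the extension $\Phi_j$ across $Y_j(c^*)$, not conformality. Passing to a limit $H=\lim H_j$ leaves the Beltrami coefficient a priori supported on the set of $z\in J(f)$ whose orbit keeps visiting $Y_j(c^*)$ for every $j$, i.e.\ whose tableau is critical. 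In the persistently recurrent regime this set need not be countable and can a priori have positive Lebesgue measure; one cannot invoke Weyl's lemma directly. The paper avoids exactly this issue by proving separately, in Step~5 of Section~\ref{p-r-case}, that $f$ carries no invariant line field on $J(f)$ (Theorem \ref{cubic-property}(2)), using Shen's criterion (Lemma \ref{nilf}) together with the bounded shape property of the principal nest (Theorem \ref{KSS}(4) and the associated $Y_j''$ collar). Since $H$ conjugates $f$ to $\tilde f$, its Beltrami restricted to $J(f)$ would be an invariant line field; the absence of such a field forces $H$ to be conformal without any measure-zero claim about the support.

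A secondary point: in the persistently recurrent case the mere existence of the matched nest with uniform moduli does not by itself produce a qc conjugacy with \emph{uniformly bounded} dilatation across all depths. You need the AKLS-type extension lemma (Lemma \ref{qc-extension} in the paper), applied inductively down the principal nest, to propagate a single dilatation bound $K_*=\max\{K_1,K(\nu,\tilde\nu,D_0)\}$ to every $\Phi_j: Y_j(c^*)\to \widetilde Y_j(\tilde c^*)$ (Step 3). Without this ingredient the pullbacks $H_j$ have no reason to form a normal family. Your treatment of the non-recurrent and reluctantly recurrent cases via shrinking puzzle pieces with bounded shape and Lebesgue density is consistent with the paper's Section~\ref{other-case}, where $J(f)$ is in fact shown to have zero measure; it is only the persistently recurrent branch where the measure-zero shortcut breaks down and the invariant-line-field argument is indispensable.
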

The assumption that $T_{f}(c^*)$  is aperiodic implies that $f$ is not renormalizable\footnote{We say that $f$ is {\it renormalizable},  there exist an integer $k\geq 0$, two open disks $U,V$ with $\overline{U}\subset V$, such that $f^k: U\rightarrow V$ is a polynomial-like map of degree $\geq 2$, with connected Julia set, which is not equal to $J(f)$.} and $J(f)=K(f)$\footnote{To see this, note that $K(f)\neq J(f)\Longrightarrow \text{ there is a periodic Fatou component }U\subset K(f)$
$\Longrightarrow \text{ there exists puzzles pieces $P_n\supset P_{n+k}\supset U$}$
$\Longrightarrow {\rm deg}(f^k: P_{n+k}\rightarrow P_{n})\geq 2$ (by Schwarz Lemma)
$\Longrightarrow c^*$ is in the cycle of $U$ $\Longrightarrow T_f(c^*)$ is periodic.}. 
The proof of Theorem \ref{c-rigidity} actually gives more:

  \begin{thm}\label{cubic-property}
Let $f\in \mathcal C_0(\mathcal{S}_p)$,  suppose that $T_{f}(c^*)$  is aperiodic. Then  

(1). The Julia set $J(f)$ is locally connected.

(2). $f$ carries no invariant line fields on $J(f)$.
   \end{thm}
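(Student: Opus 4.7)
The plan is to deduce both statements from the single analytic fact that the puzzle pieces $P_n(z)$ shrink to $z$ for every $z\in J(f)$. This is the central output of the Branner--Hubbard--Yoccoz puzzle machinery combined with the a priori moduli bounds of Kahn--Lyubich and Kozlovski--Shen--van Strien. Once shrinking is in hand, statement (1) follows because the impression ${\rm Imp}(z)$ defined in Section \ref{BHY-puzzle} reduces to $\{z\}$ away from $\Gamma_\infty$, and at points of $\Gamma_\infty$ to a finite union of touching pieces whose union still shrinks to the point (using admissibility condition (2) that periodic points on $\Gamma\cap J(f)$ are repelling, hence locally connected). Statement (2) then follows by the standard pullback-of-density argument of McMullen.

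First I would fix the $p$-admissible puzzle $\Gamma$ furnished by Theorem \ref{adm-puzzle}; since $T_f(c^*)$ is aperiodic, the excerpt already records that $f$ is non-renormalizable and $J(f)=K(f)$. Next I would split into the two standard regimes for the tableau $T_f(c^*)$. If $T_f(c^*)$ is non-recurrent or reluctantly recurrent, then $c^*$ has infinitely many children with uniformly bounded degree, and a Branner--Hubbard tableau argument, supplemented by the Kahn--Lyubich covering lemma \cite{KL1,KL2} in the degenerate cases, yields a definite modulus in a sequence of annuli surrounding $c^*$. If $T_f(c^*)$ is persistently recurrent, I would run the enhanced nest construction of \cite{KSS,KS} adapted to the rational-like setting as in \cite{AKLS}, producing a principal nest around $c^*$ with a uniform lower bound on the moduli of the successive annuli. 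Either way, ${\rm diam}\,P_n(c^*)\to 0$.

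Shrinking at $c^*$ propagates to shrinking at every $z\in J(f)-\Gamma_\infty$ by a Ma\~n\'e-type argument: for orbits that eventually avoid a deep critical piece, inverse branches of $f^n$ are univalent with bounded distortion on a definite round neighborhood and therefore have diameters tending to $0$; for orbits that return infinitely often to the critical piece, one pulls back the critical nest directly along the orbit, with the Koebe distortion controlled by the definite annulus moduli from the previous step. Combined with the $\Gamma_\infty$ discussion this proves (1). For (2), assuming a measurable $f$-invariant line field on a positive-measure set $E\subset J(f)$, pick a Lebesgue density point $z_0\in E$ that is also a point of almost-continuity for the line field, choose a subsequence of iterates for which $f^{n_k}(z_0)$ accumulates on $\omega(c^*)$, and pull the critical nest back along this orbit via bounded-degree branches of $f^{n_k}$; the Koebe-controlled pullbacks would force the line field to become simultaneously nearly parallel on a neighborhood of $c^*$ and invariant under the $3$-to-$1$ branching of $f^p$ there, which is impossible.

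The main obstacle is securing the a priori bound in the persistently recurrent case. Although $\mathcal S_p$ furnishes only the single ``free'' critical orbit of $c^*$ (the other critical point $c$ is pinned inside a superattracting cycle), the enhanced-nest/covering-lemma machinery of \cite{AKLS,KL1,KSS} is native to polynomial-like maps, so one must verify carefully that the puzzle pieces sitting inside the rational-like domain $X'\Subset X$ from Section \ref{BHY-puzzle} satisfy the compatibility hypotheses — in particular that the bounded-degree pullbacks required by the covering lemma remain well inside $X$ and do not interact pathologically with the attracting basins $\mathcal A_{c,a}$. Checking this compatibility, and showing that the presence of the superattracting cycle does not degrade the geometric bounds, is the delicate technical point of the proof.
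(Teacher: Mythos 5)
Your proposal follows essentially the same route as the paper: fix the $p$-admissible puzzle of Theorem \ref{adm-puzzle}, split according to whether $T_f(c^*)$ is persistently recurrent, run the principal-nest machinery of \cite{KSS,KS,KL1,KL2} (packaged here as Theorem \ref{KSS}) in the persistently recurrent case, propagate the shrinking of critical pieces to arbitrary $z\in J(f)$ via bounded-degree pullbacks and Koebe distortion, and convert this geometric control into the absence of invariant line fields.

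Two points are worth flagging. First, your closing sentence about the line field becoming ``invariant under the $3$-to-$1$ branching of $f^p$'' is off on the local degree: $-c$ is a simple critical point of the cubic $f_{c,a}$, so $f^p$ is locally $2$-to-$1$ there (as long as $T_f(c^*)$ is aperiodic and $f\in\mathcal C_0(\mathcal S_p)$, the orbit of $-c$ never passes through $\pm c$ again), and the paper accordingly builds degree-two proper maps $h_j: U_j \to V_j$ with uniformly bounded shape and feeds them to Shen's criterion (Lemma \ref{nilf}) rather than arguing via rotational non-invariance. Second, in the non-recurrent and reluctantly recurrent regimes the paper does not invoke the covering lemma at all: Lemma \ref{red-assump} extracts children of the critical piece with explicitly bounded return degree, and Lemma \ref{regular} then yields both the moduli bounds and the stronger conclusion that $J(f)$ has zero Lebesgue measure (so (2) is immediate there, without any density-point argument). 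Your outline is correct in spirit, but these shortcuts sidestep the compatibility worries you raise about running the covering lemma inside the rational-like domain $X'\Subset X$.
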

   
Here, a {\it line field} $\mu$ supported on $E$ is a Beltrami differential $\mu=\mu(z)\frac{d\bar z}{dz}$ supported on $E$ with $|\mu|=1$. A line field $\mu$ is called {\it measurable} if $\mu(z)$ is a measurable function. We say that $f$  carries an {\it invariant line field} if there is a measurable line field $\mu=\mu(z)\frac{d\bar{z}}{dz}$ supported on a positive measurable subset of $J(f)$  such that $f^*\mu=\mu$ almost everywhere.

Theorem \ref{c-rigidity} and Theorem \ref{cubic-property} (1) generalize Yoccoz's famous theorem to cubic maps $f\in\mathcal{C}_0(\mathcal S_p)$.
In fact, in the case $p=1$,  Yoccoz's proof of local connectivity \cite{H,M5}, Lyubich's proof of zero measure \cite{L} for quadratic Julia sets both work here.  However, their arguments will break down for cubic $f\in\mathcal{C}_0(\mathcal S_p)$  in the persistently recurrent case when $p\geq 2$.  This is because the existence of a $p$-admissible puzzle with $p\geq 2$ makes the situation essentially as complicated as the multicritical case. The {\it principle nest} of critical puzzle pieces  (see Theorem \ref{KSS}) will be involved to deal with this case.

%
%
%
%
%

%
%
%
%
%
%
%

This section is organized as follows. We first recall some analytic lemmas to be used in our approach (Section \ref{analytic-tool}).
For further discussions, we distinguish $T_{f}(c^*)$ into the persistently recurrent case and the other (non-recurrent, reluctantly recurrent) cases.
We will recall the principal nest in Section \ref{principal-nest} and use it to deal with the persistently recurrent case  in Section \ref{p-r-case}.
Finally, we treat the rest cases in Section \ref{other-case}.
The methods for these cases are slightly different.

\subsection{Analytic tools} \label{analytic-tool}
To prove Theorems \ref{c-rigidity} and \ref{cubic-property}, we 
 need some analytic tools, including a qc-extension lemma (Lemma \ref{qc-extension}); a criterion of no invariant field (Lemma \ref{nilf}); a qc-criterion (Lemma \ref{qc-c}); an analytic fact on Lebesgue density and geometry (Lemma \ref{regular}). The first two will be used in the persistently recurrent case, while the last two take effect in other cases.  
 
 
 \begin{lem}[see {\cite[Lemma 3.2]{AKLS}}] \label{qc-extension} For every number $\rho\in(0,1)$ and integer $d\geq 2$, there exist numbers $r=r(\rho, d)\in (\rho, 1)$ and $K_0=K_0(\rho,d)$ with the following property.
 Let $G, \widetilde{G}: \mathbb{D}\rightarrow \mathbb{D} $ be proper holomorphic maps of degree $d$.
 Let $h_1,h_2: \partial\mathbb{D}\rightarrow \partial\mathbb{D}$ be such
that $\widetilde{G}\circ h_2= h_1\circ G$. Assume that

(1).  $|G(0)|, |\widetilde G(0)| \leq\rho$;
 
(2). The critical values of $G, \widetilde{G}$ are contained in $\overline{\mathbb{D}}_\rho$;
 
(3). $h_1$ has a $K_1$-qc extension $H_1: \mathbb{D}\rightarrow\mathbb{D}$ which is the identity on $\mathbb{D}_r$.
 
Then $h_2$ admits a $K_2$-qc extension $H_2: \mathbb{D}\rightarrow\mathbb{D}$ which is the identity on $\mathbb{D}_r$, where
 $K_2=\max\{K_1,K_0\}$.
 \end{lem}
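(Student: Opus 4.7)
The plan is to assemble $H_2$ piecewise from three ingredients on disjoint regions of $\mathbb{D}$: on a central disk $\overline{\mathbb{D}}_r$ it will be the identity; on a thin outer annulus adjacent to $\partial\mathbb{D}$ it will be a lift of $H_1$ through the covering relation $\widetilde G\circ(-)=H_1\circ G$; and on a buffer annulus in between it will be a qc interpolation. The lift carries the dilatation of $H_1$, contributing the $K_1$ part of the bound, while the interpolation contributes the universal constant $K_0=K_0(\rho,d)$.

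First I fix the parameter $r$. Proper holomorphic maps $G:\mathbb{D}\to\mathbb{D}$ of degree $d$ are exactly the degree-$d$ Blaschke products, and conditions (1)--(2) cut out a compact family $\mathcal{F}$. For each $G\in\mathcal{F}$ and $r>\rho$, the restriction $G:G^{-1}(\mathbb{D}\setminus\overline{\mathbb{D}}_r)\to\mathbb{D}\setminus\overline{\mathbb{D}}_r$ is unramified; the component $U_G$ adjacent to $\partial\mathbb{D}$ is a topological annulus covering its image with degree $d$. A normal-family argument---if it failed, a subsequence would produce $G_n\to G_*\in\mathcal{F}$ and points $z_n\to z_*\in\mathbb{D}$ with $|G_*(z_*)|=1$, contradicting the maximum principle---shows that, as $r\to 1^-$, the inner boundary of $U_G$ approaches $\partial\mathbb{D}$ uniformly in $G\in\mathcal{F}$. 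I therefore fix $r=r(\rho,d)\in(\rho,1)$ large enough that $U_G\subset\{|z|>r_1\}$ for some $r_1=r_1(\rho,d)>r$, leaving a buffer annulus of positive modulus.

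Next I build the lift. Since $G:U_G\to\mathbb{D}\setminus\overline{\mathbb{D}}_r$ and $\widetilde G:U_{\widetilde G}\to\mathbb{D}\setminus\overline{\mathbb{D}}_r$ are unramified degree-$d$ covers, covering-space theory yields a unique continuous $\hat H:U_G\to U_{\widetilde G}$ satisfying $\widetilde G\circ\hat H=H_1\circ G$ and $\hat H|_{\partial\mathbb{D}}=h_2$; the hypothesis $\widetilde G\circ h_2=h_1\circ G$ selects the correct monodromy. Being the lift of the $K_1$-qc map $H_1\circ G$ through the holomorphic cover $\widetilde G$, the map $\hat H$ has pointwise dilatation equal to that of $H_1$ at $G(z)$, hence is $K_1$-qc on $U_G$. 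I then set $H_2=\mathrm{id}$ on $\overline{\mathbb{D}}_r$ and $H_2=\hat H$ on $U_G$; it remains to define $H_2$ on the buffer region $\Omega:=\mathbb{D}\setminus(U_G\cup\overline{\mathbb{D}}_r)\subset\{r<|z|<r_1\}$.

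The main obstacle is to choose this interpolation with dilatation bounded independently of $K_1$. The key point is that the boundary trace $\hat H|_{\partial_{\mathrm{in}}U_G}$ is \emph{real analytic}: on $\partial_{\mathrm{in}}U_G$ one has $G(z)\in\partial\mathbb{D}_r\subset\overline{\mathbb{D}}_r$, where $H_1=\mathrm{id}$, so the lift equation reduces to $\widetilde G(\hat H(z))=G(z)$. Parametrizing $\partial_{\mathrm{in}}U_G$ and $\partial_{\mathrm{in}}U_{\widetilde G}$ as degree-$d$ covers of $\partial\mathbb{D}_r$, the trace $\hat H|_{\partial_{\mathrm{in}}U_G}$ becomes the angular identity up to a rotation by $2\pi k/d$ for one of $d$ integers $k$; in the $z$-coordinate this produces one of $d$ real-analytic diffeomorphisms depending only on $(G,\widetilde G,k)$ and \emph{not} on $H_1$. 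The parameter space $\mathcal{F}\times\mathcal{F}\times\{0,\ldots,d-1\}$ being compact, a standard compactness argument for qc extensions (applied to the annulus $\Omega$ with boundary data: the identity on $\partial\mathbb{D}_r$ and this real-analytic trace on $\partial_{\mathrm{in}}U_G$) produces a uniform bound $K_0=K_0(\rho,d)$ on the dilatation of the interpolation. Assembling the three pieces yields an $H_2$ with dilatation $\leq\max\{K_1,K_0\}$, as required.
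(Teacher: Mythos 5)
Your decomposition runs into a genuine obstruction at the very first step, and it is precisely the step where the original AKLS hypothesis $G(0)=\widetilde G(0)=0$ is used. You set $H_2=\mathrm{id}$ on $\overline{\mathbb D}_r$ and $H_2=\hat H$ on $U_G=\mathbb D\setminus G^{-1}(\overline{\mathbb D}_r)$; for these two prescriptions to be compatible you need $\overline{\mathbb D}_r\cap U_G=\emptyset$, i.e.\ $\max_{|z|\le r}|G(z)|\le r$, and you even ask for the stronger inclusion $U_G\subset\{|z|>r_1\}$ with $r_1>r$. But this can fail for every $r\in(\rho,1)$ once $\rho$ is not small. Take $d=2$ and $G(z)=\phi(z^2)$ with $\phi(w)=\tfrac{w+\rho}{1+\rho w}$; then $|G(0)|=\rho$, the only critical value is $\rho\in\overline{\mathbb D}_\rho$, so $G\in\mathcal F$, and $\max_{|z|\le r}|G(z)|=\tfrac{r^2+\rho}{1+\rho r^2}$. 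The inequality $\tfrac{r^2+\rho}{1+\rho r^2}\le r$ is equivalent to $\rho\le\tfrac{r}{1+r+r^2}$, and $\tfrac{r}{1+r+r^2}<\tfrac13$ for all $r<1$. Hence for any $\rho\ge\tfrac13$ one has $\max_{|z|\le r}|G(z)|>r$ for every $r<1$, so $\overline{\mathbb D}_r$ meets $U_G$, the lift $\hat H$ and the identity disagree on the overlap, and your three-piece map is not even well defined. (When $G(0)=0$, Schwarz gives $|G(z)|\le|z|$, which is exactly what makes $\overline{\mathbb D}_r\subset G^{-1}(\overline{\mathbb D}_r)$ automatic in the AKLS setting; replacing $G(0)=0$ by $|G(0)|\le\rho$ destroys this.)

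The normal-family argument you offer does not repair this. It shows only that $\inf\{|z|:z\in\partial_{\mathrm{in}}U_G\}\to 1$ as $r\to1^-$, uniformly over $\mathcal F$; it does not show that this infimum exceeds $r$, which would require the inner boundary to approach $\partial\mathbb D$ strictly faster than $r$ does, and the example above shows it need not. A secondary slip: the buffer $\Omega=G^{-1}(\overline{\mathbb D}_r)\setminus\overline{\mathbb D}_r$ is not contained in $\{r<|z|<r_1\}$ (indeed $\{r<|z|\le r_1\}\subset\Omega$ under your own inclusion), though this is minor next to the overlap problem. As it stands, the proposal establishes the lemma only in the regime $\rho<1/3$ (roughly), whereas the paper needs it for arbitrary $\rho\in(0,1)$; the non-centered condition (1) genuinely changes the geometry, and some additional idea is needed beyond transplanting the centered argument.
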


 Lemma \ref{qc-extension} is a variant of \cite[Lemma 3.2]{AKLS} which require that $G(0)=\widetilde G(0)=0$.
 The rewritten condition (1) here allows more flexible applications. 
 Their proofs are essentially same. 
 
 For a topological disk $U\subset \mathbb C$ and a point 
$z\in U$, the {\it shape} of $U$ with respect to $z$ is a quantity to measure the geometry of $U$, defined by
$$\mathbf S(U, z)={\sup_{w\in \partial U} |w-z|}/{\min_{w\in \partial U} |w-z|}.$$


     \begin{lem}[see {\cite[Prop. 3.2]{S}}] \label{nilf}
 Let $R$ be a rational map of degree $\geq 2$ with $\infty\notin J(R)$. Let $z\in J(R)$.
If there exist a constant $C\geq 1$, positive integers $N\geq 2$, $n_k$'s,  and proper maps $h_{k}=R^{n_k}|_{U_k}: U_k\rightarrow V_k, \ k\geq 1$  with the following properties:
 
(1).  $U_k,V_k$ are topological disks in $\mathbb C$ and as $k\rightarrow \infty$
$${\rm diam}(U_k)\rightarrow 0, \ {\rm diam}(V_k)\rightarrow0.$$
 
(2). $2\leq {\rm deg}(h_k)\leq N$, for all $k\geq 1$. 
 
(3). For some $u\in U_k$ with $h_k'(u)=0$ and for $v=h_k(u)$, we have 
 $${\mathbf S}(U_k, u), \  {\mathbf S}(V_k, v)\leq C.$$
 
 (4). $d(U_k, z)\leq C{\rm diam}(U_k), d(V_k, z)\leq C{\rm diam}(V_k)$.
 
Here ${\rm diam}$ and $d$ denote the Euclidean diameter and distance.

 Then for any line field $\mu$ with $R^*\mu=\mu$, either $z\notin {\rm supp}(\mu)$ or $\mu$ is not almost continuous at $z$.
 \end{lem}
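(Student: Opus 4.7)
The plan is to argue by contradiction. Suppose that $z\in\mathrm{supp}(\mu)$ and $\mu$ is almost continuous at $z$, so there exists $\alpha\in\mathbb{C}$ with $|\alpha|=1$ such that for every $\epsilon>0$,
\[
\frac{|\{w\in B(z,r):|\mu(w)-\alpha|>\epsilon\}|}{|B(z,r)|}\longrightarrow 0\quad\text{as }r\to 0.
\]
I will renormalize the covers $h_k$ by affine rescalings based at the critical point $u$ and its image $v$, extract a limiting proper holomorphic map of degree $d\in[2,N]$ with a critical point at the origin, and use $R^*\mu=\mu$ to conclude that a constant line field on the target pulls back to a constant line field on the source --- which is impossible through a branched cover.

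First I rescale. Let $u_k\in U_k$, $v_k=h_k(u_k)\in V_k$ denote the critical data of $h_k$ (which depend on $k$), set $r_k=\mathrm{diam}(U_k)$, $s_k=\mathrm{diam}(V_k)$, and put $\phi_k(\zeta)=(\zeta-u_k)/r_k$, $\psi_k(\zeta)=(\zeta-v_k)/s_k$. By hypothesis (3) the rescaled domains $\tilde U_k=\phi_k(U_k)$ and $\tilde V_k=\psi_k(V_k)$ each contain a disc of radius $1/(2C)$ about $0$ and lie inside the disc of radius $1$ about $0$. The rescaled maps $\tilde h_k:=\psi_k\circ h_k\circ\phi_k^{-1}\colon\tilde U_k\to\tilde V_k$ are proper holomorphic maps of degree in $[2,N]$ with critical point at $0$; by a standard normal family argument pass to a subsequence to obtain Carath\'eodory limits $\tilde U_\infty,\tilde V_\infty$ and a limit $\tilde h_\infty\colon\tilde U_\infty\to\tilde V_\infty$ that is a proper holomorphic branched cover of some degree $d\in[2,N]$ with $\tilde h_\infty'(0)=0$.

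Next I transport the line field. Since $\phi_k,\psi_k$ are affine with positive real scaling factor, the push-forwards $\mu_k(w):=\mu(u_k+r_k w)$ and $\nu_k(w):=\mu(v_k+s_k w)$ are unit line fields on $\tilde U_k,\tilde V_k$ satisfying $\tilde h_k^*\nu_k=\mu_k$. The key claim is that $\mu_k\to\alpha$ and $\nu_k\to\alpha$ in measure on every compact subset of $\tilde U_\infty,\tilde V_\infty$. This uses (3) and (4) together: the rescaled support $u_k+r_k\tilde U_k$ sits inside $B(z,(C+2)r_k)$ by (4), while its Lebesgue measure is at least $\pi r_k^2/(4C^2)$, a definite fraction of the measure of $B(z,(C+2)r_k)$. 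Hence the almost-continuity of $\mu$ at $z$, combined with $r_k\to 0$, forces the rescaled line field to be close to $\alpha$ in measure on compact subsets. Passing to the limit in $\tilde h_k^*\nu_k=\mu_k$ yields
\[
\tilde h_\infty^*\!\left(\alpha\,\frac{d\bar w}{dw}\right)=\alpha\,\frac{d\bar\zeta}{d\zeta}\quad\text{a.e.\ on }\tilde U_\infty.
\]

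The final step is a local computation at $0$. Writing $\tilde h_\infty(\zeta)=\tilde h_\infty(0)+\zeta^{d} g(\zeta)$ with $g(0)\ne 0$, one finds $\overline{\tilde h_\infty'(\zeta)}/\tilde h_\infty'(\zeta)=\overline{\zeta^{d-1}}/\zeta^{d-1}\cdot(1+O(|\zeta|))$, whose argument is $-2(d-1)\arg\zeta+O(|\zeta|)$ and therefore non-constant on every punctured neighbourhood of $0$. This contradicts the displayed identity, whose right-hand side has constant argument $\arg\alpha$, producing the desired contradiction. The main obstacle is the in-measure convergence in step three: it is there that conditions (3) and (4) interlock, since without (4) the rescaled discs could drift out of the region where almost continuity controls $\mu$, and without (3) they could degenerate into thin slivers that miss a positive density of $B(z,O(r_k))$. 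The assumption $\mathrm{diam}(U_k),\mathrm{diam}(V_k)\to 0$ is what makes the density argument quantitative.
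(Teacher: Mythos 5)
The paper itself does not prove this lemma (it cites Shen's Prop.~3.2), so there is no in-paper argument to compare against; your blow-up at the critical point is correct and is the standard route for such statements. Two small points are worth making explicit. First, the locally uniform limit $\tilde h_\infty$ must be shown non-constant: this follows because each $\tilde h_k$ has a zero of order $\ge 2$ at the origin, so after uniformizing $\tilde U_k,\tilde V_k$ by the round disk one sees $\tilde h_k$ as a Blaschke product with a double zero at $0$, and the factor $\zeta^2$ survives the limit even when the remaining zeros drift to $\partial\mathbb D$. Second, passing to the limit in $\tilde h_k^*\nu_k=\mu_k$ requires $\nu_k\circ\tilde h_k\to\alpha$ in measure on compacts $K$ away from the critical set of $\tilde h_\infty$, which is not automatic for a composition; it follows from the change-of-variables estimate
\[
\int_K \mathbf 1_{\{|\nu_k-\alpha|>\epsilon\}}\bigl(\tilde h_k(\zeta)\bigr)\,|\tilde h_k'(\zeta)|^2\,dA(\zeta)\ \le\ N\,\bigl|\{\eta:|\nu_k(\eta)-\alpha|>\epsilon\}\bigr|\ \longrightarrow 0,
\]
combined with the uniform lower bound $|\tilde h_k'|\ge c>0$ on $K$ for large $k$. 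With these two details filled in, your argument is complete.
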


The following qc-criterion is a simplified version of \cite[Lemma 12.1]{KSS}, with a slightly difference in the second assumption (that is, we replace {\it a sequence of
 round disks} in \cite{KSS} by
{\it a sequence of disks with uniformly bounded shape}),  and the original proof goes through without any problem.
       
%
       
        \begin{lem}[see {\cite[Lemma 12.1]{KSS}}]\label{qc-c} Let $\phi:\Omega\rightarrow \widetilde{\Omega}$  be a  homeomorphism
between two Jordan domains, $k\in(0,1)$ be a constant.  Let $X$ be a subset of $\Omega$ such that both $X$ and $\phi(X)$ have zero Lebesgue measures. Assume:

1. $|\bar{\partial}\phi|\leq k|{\partial}\phi|$ a.e. on  $\Omega\backslash X$.

2.  There is a constant $M>0$ such that for all  $x \in X$, there is a sequence of open topological disks $D_1 \Supset  D_2 \Supset  \cdots$ containing $x$, satisfying that

(a). $\bigcap_j \overline{D_j}=\{x\}$, and 

(b).  $\sup _j \mathbf S(D_j , x) \leq M$, \   $\sup_j \mathbf S(\phi(D_j ), \phi(x))< \infty.$ 

Then $\phi$ is a $K$-quasi-conformal map, where $K$ depends on $k$ and $M$.
\end{lem}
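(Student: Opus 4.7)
My plan is to reduce the statement to a removability assertion via the measurable Riemann mapping theorem (MRMT), then to extract the requisite removability from hypothesis (2). Set $\mu := \bar\partial\phi/\partial\phi$ almost everywhere on $\Omega\setminus X$ and extend by zero on $X$. Since $X$ has Lebesgue measure zero and $\|\mu\|_\infty\le k < 1$, MRMT produces a $K_0$-quasiconformal homeomorphism $\psi:\Omega\to\Omega'$ with complex dilatation $\mu$, where $K_0=(1+k)/(1-k)$. Form $g := \phi\circ\psi^{-1}:\Omega'\to\widetilde\Omega$; this is a homeomorphism whose Beltrami coefficient vanishes almost everywhere on $\Omega'\setminus\psi(X)$. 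Because $\psi$ is absolutely continuous and $X$ has measure zero, so does $\psi(X)$, hence $\mu_g=0$ almost everywhere on $\Omega'$. If I can upgrade this to show $g$ is globally quasiconformal on $\Omega'$, then $g$ will in fact be conformal, and consequently $\phi = g\circ\psi$ will be quasiconformal with the desired bound.

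The quasiconformality of $g$ at points of the exceptional set $\psi(X)$ is the crux. First I transport the shape hypothesis through $\psi$. Since $\psi$ is $K_0$-quasiconformal and therefore locally quasisymmetric on compact subsets of $\Omega$, the uniform bound $\sup_j \mathbf S(D_j,x)\le M$ converts into a uniform bound $\sup_j \mathbf S(\psi(D_j),\psi(x))\le M'$ with $M'$ depending only on $M$ and $K_0$. On the target side, $g(\psi(D_j))=\phi(D_j)$ has finite (possibly $x$-dependent) shape around $\phi(x)$ by (2b). Thus at every $y=\psi(x)\in\psi(X)$ there is a nested sequence of topological disks $\psi(D_j)$ shrinking to $y$ with shape uniformly bounded by $M'$, each mapped by $g$ to a disk of finite shape around $g(y)$.

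These metric bounds provide exactly the data needed to show $g$ is quasiconformal across $\psi(X)$. Since $g$ is $K_0$-quasiconformal on $\Omega'\setminus\psi(X)$ (with vanishing Beltrami coefficient) and exhibits bounded geometric distortion at every point of $\psi(X)$ via the nested disks, a metric criterion for quasiconformality (in the spirit of Gehring's or Lehto--Virtanen's metric definition, applied via the shape-bounded exhaustion $\psi(D_j)$) upgrades $g$ to a globally $K_1$-quasiconformal homeomorphism of $\Omega'$ with $K_1=K_1(M,K_0)$. Combined with $\mu_g=0$ a.e., Weyl's lemma then forces $g$ to be conformal. Therefore $\phi=g\circ\psi$ is $K$-quasiconformal with $K$ depending only on $k$ and $M$, as required.

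The main obstacle I expect is this last removability step: extracting a genuine global quasiconformal bound at points of the potentially uncountable exceptional set $\psi(X)$ from merely the shape-bounded nested disks. The asymmetry in (2b) between a uniform bound $M$ on one side and mere finiteness on the other is precisely what forces the global constant to depend only on $M$ and $k$; the argument must therefore proceed point-by-point on $\psi(X)$, using the uniform side to produce the uniform QC constant while the finite but non-uniform side on the image suffices to certify that no individual point is a pathological singularity. This is the technical step carried out in \cite{KSS}, and the slightly weakened hypothesis here (bounded shape in place of round disks) requires only that the metric criterion be applied to shape-bounded exhaustions rather than round ones, a modification that changes constants but not the logic.
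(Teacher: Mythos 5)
The reduction you propose — straighten $\mu:=\bar\partial\phi/\partial\phi$ by the measurable Riemann mapping theorem and then show $g=\phi\circ\psi^{-1}$ is conformal — hides a circularity. To pass from $\mu_g=0$ a.e.\ to conformality of $g$ via Weyl's lemma you must first know $g\in W^{1,1}_{\rm loc}$; to get that you invoke a ``metric criterion'' for quasiconformality. But the classical Gehring--Lehto--Virtanen criterion requires the linear dilatation to be uniformly bounded outside a set of $\sigma$-finite $1$-dimensional Hausdorff measure, with only pointwise finiteness permitted on that set. Your exceptional set $\psi(X)$ is merely Lebesgue-null; it can have Hausdorff dimension strictly between $1$ and $2$ and hence need not be $\sigma$-finite in $\mathcal H^1$, so the criterion is not applicable. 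Moreover, on $\Omega'\setminus\psi(X)$ the statement $\mu_g=0$ a.e.\ is an infinitesimal condition at differentiability points with nonvanishing Jacobian; it does not by itself give the required pointwise bound on the linear dilatation (a $\limsup$ over shrinking radii). Nothing in the hypotheses rules out $\partial\phi=\bar\partial\phi=0$ on a positive-measure subset of $\Omega\setminus X$, where $\mu$ is $0/0$, $\psi$ would be conformal, and $g$ would have vanishing derivative — exactly the situation in which ``$\mu_g=0$ a.e.'' says nothing.

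So the factorization does not buy a shortcut: proving that $g$ (equivalently $\phi$) is ACL/Sobolev is precisely the content of the lemma, and you have pushed that obligation into the unnamed metric criterion where it cannot be discharged. Note also that the paper gives no independent proof of this lemma; it states that the argument of \cite[Lemma 12.1]{KSS} goes through unchanged with round disks replaced by shape-bounded disks. That argument works directly with $\phi$: it establishes absolute continuity on lines from the nested shape-bounded disks via modulus/distortion estimates, and only then applies the a.e.\ dilatation bound. If you want a self-contained proof, that direct route — rather than the MRMT factorization — is the one to follow.
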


Lastly, the following fact is useful when dealing with the  non persistently recurrent cases, see \cite[Prop. 6.1]{QWY} and \cite[Lemma 9.4]{QRWY}. 

\begin{lem}\label{regular} Let $R$ be a rational (or rational-like) map with $\infty\notin J(R)$. Let $z\in J(R)$. Suppose there exist integers $D_z>0$ and $0\leq n_1< n_2<\cdots$, a sequence of disk neighborhoods  $U'_j\Subset U_j$ of $z$, two disks $V_z'\Subset V_z$ so that
$R^{n_j}: U'_j\rightarrow V_z'$ and $R^{n_j}: U'_j\rightarrow V_z$  are proper maps of degree $\leq D_z$. Then

(1).  ${\rm diam}(U'_j)\rightarrow 0$ as $j\rightarrow \infty$.

(2). $z$ is not a Lebesgue density point of $J(R)$.

(3).  Assume further $\{R^{n_j}(z)\}_{j\geq1}\subset V''_z$ for some disk $V''_z\Subset V'_z$. Then  
  $$\mathbf S(U'_j, z)\leq C(D_z, m_z), \ \forall j\geq 1,$$
 where $C(D_z, m_z)$ depends on $D_z$ and $m_z=\{{\rm mod}(V_z-\overline{V'_z}), {\rm mod}(V'_z-\overline{V''_z})\}$.
\end{lem}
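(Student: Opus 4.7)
The plan is to base all three conclusions on a single modulus estimate. Since $R^{n_j}:U_j\to V_z$ is proper of degree $\leq D_z$ and $U'_j$ is the component of $(R^{n_j})^{-1}(V'_z)$ containing $z$, its restriction is a proper holomorphic map of topological annuli $R^{n_j}:U_j\setminus\overline{U'_j}\to V_z\setminus\overline{V'_z}$ of some degree $d_j\leq D_z$. Because an annulus has Euler characteristic zero, Riemann--Hurwitz forces this restricted map to be an unbranched covering, hence
\[
\mathrm{mod}(U_j\setminus\overline{U'_j})=\frac{\mathrm{mod}(V_z\setminus\overline{V'_z})}{d_j}\geq\frac{m_z}{D_z}.
\]
I would then transport everything to $\mathbb{D}$ via the Riemann map $\Phi_j:\mathbb{D}\to U_j$ with $\Phi_j(0)=z$, so that $\omega_j:=\Phi_j^{-1}(U'_j)$ is a simply connected set with $\omega_j\subset\overline{\mathbb{D}}_\rho$ for some $\rho=\rho(m_z,D_z)<1$ forced by the modulus bound.

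For (1), I argue by contradiction. If $\mathrm{dist}(z,\partial U_j)\geq\eta>0$ along a subsequence, then the fixed ball $B(z,\eta)\subset U_j$ is mapped by $R^{n_j}$ into the bounded disk $V_z$; Montel's theorem makes $\{R^{n_j}\}$ normal at $z$, contradicting $z\in J(R)$. Hence $\mathrm{dist}(z,\partial U_j)\to 0$, and Koebe's $1/4$ theorem gives $|\Phi'_j(0)|\to 0$. Koebe distortion on $\overline{\mathbb{D}}_\rho$ then yields
\[
\mathrm{diam}(U'_j)\leq\mathrm{diam}(\Phi_j(\overline{\mathbb{D}}_\rho))\leq C(\rho)\,|\Phi'_j(0)|\longrightarrow 0,
\]
which proves (1).

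For (2), the same Koebe estimate applied to a round sub-annulus $\{\rho<|\zeta|<\rho'\}\subset\mathbb{D}\setminus\overline{\omega_j}$ produces a round Euclidean annulus $A_j=\{r_j<|w-z|<R_j\}\subset U_j\setminus\overline{U'_j}$ with $R_j/r_j$ bounded below and $R_j\to 0$. Because $\infty\notin J(R)$, Fatou's theorem implies $J(R)$ has empty interior, so $V_z\setminus\overline{V'_z}$ contains a non-empty open Fatou subset of definite relative area. Pulling this back through the bounded-degree unbranched cover and comparing Euclidean areas via Koebe distortion on $\Phi_j$, a definite fraction of $A_j$, and hence of $B(z,R_j)$, lies in the Fatou set uniformly in $j$. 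Since $R_j\to 0$, this shows that $z$ cannot be a Lebesgue density point of $J(R)$.

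For (3), let $U''_j$ be the $z$-component of $(R^{n_j})^{-1}(V''_z)$. The same Riemann--Hurwitz argument now yields $\mathrm{mod}(U'_j\setminus\overline{U''_j})\geq m_z/D_z$, so $z$ also sits deep inside $U'_j$. Conjugating by Riemann maps of $U_j$ and $V_z$ reduces the picture to proper self-maps $\widetilde F_j:\mathbb{D}\to\mathbb{D}$ of degree $\leq D_z$ with $\widetilde F_j(0)$ confined to a compactly contained sub-disk corresponding to $V''_z$. A Koebe-type distortion estimate for such branched covers (the same mechanism underlying Lemma~\ref{qc-extension}) then bounds the shape of the $0$-component of $\widetilde F_j^{-1}$ of the image of $V'_z$, and thus $\mathbf{S}(U'_j,z)$, by a constant depending only on $D_z$ and $m_z$. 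The hardest point will be precisely this last step: the two modulus bounds individually only control things in a conformal sense, and it is the bounded-degree Koebe-type distortion, anchored at the deep image $R^{n_j}(z)\in V''_z$, that converts both moduli into one uniform Euclidean shape bound.
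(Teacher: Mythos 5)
The paper does not actually prove this lemma; it quotes it from \cite[Prop.~6.1]{QWY} and \cite[Lemma~9.4]{QRWY}, so there is no internal proof to compare against. Judged on its own terms, your outline is reasonable but contains one genuine gap and one incorrect intermediate claim.

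The gap is in part (1). Montel does give normality of the subsequence $\{R^{n_j}\}_{j}$ on the fixed ball $B(z,\eta)$, but normality of a \emph{subsequence} of iterates does not, by itself, contradict $z\in J(R)$: the Julia set is characterised by non-normality of the \emph{full} family $\{R^{n}\}_{n\geq 0}$. You need an additional mechanism to get a contradiction. The standard one here is the density of repelling cycles in $J(R)$: pick a repelling periodic point $w\in B(z,\eta)$; then $|(R^{n_j})'(w)|\to\infty$ as $n_j\to\infty$, while normality of $\{R^{n_j}\}$ on $B(z,\eta)$ together with the uniform bound $R^{n_j}(B(z,\eta))\subset V_z$ and the Cauchy estimate forces $|(R^{n_j})'(w)|$ to stay bounded. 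This missing step also underlies your claim $R_j\to 0$ in part (2), so it must be supplied.

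The incorrect intermediate claim is that $R^{n_j}\colon U_j\setminus\overline{U'_j}\to V_z\setminus\overline{V'_z}$ is a proper map of annuli to which Riemann--Hurwitz applies. In general $(R^{n_j})^{-1}(\overline{V'_z})\cap U_j$ has several components; those different from $\overline{U'_j}$ lie in $U_j\setminus\overline{U'_j}$ but are mapped into $\overline{V'_z}$, so the restriction is neither onto $V_z\setminus\overline{V'_z}$ nor proper, and the clean equality of moduli fails. What you actually need (and what is true) is only the inequality $\mathrm{mod}(U_j\setminus\overline{U'_j})\geq \mathrm{mod}(V_z\setminus\overline{V'_z})/D_z$, which follows from a Gr\"otzsch-type extremal-length pull-back rather than from an unbranched covering; the same caveat applies where you invoke an ``unbranched cover'' in part (2). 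Finally, in part (3) the modulus bound $\mathrm{mod}(U'_j\setminus\overline{U''_j})\geq m_z/D_z$ alone does not place $z$ deep inside $U'_j$, since $z$ could sit near $\partial U''_j$; the correct anchor is that $R^{n_j}(z)\in V''_z$ lies at definite hyperbolic distance (in $V'_z$) from the part of $V'_z$ outside $V''_z$, so by Schwarz--Pick applied to $R^{n_j}\colon U'_j\to V'_z$ a definite hyperbolic ball about $z$ lies in $U'_j$, and this combined with $\Phi_j^{-1}(U'_j)\subset\overline{\mathbb{D}}_\rho$ and Koebe distortion of $\Phi_j$ gives the Euclidean shape bound.
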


\subsection{Principal nest}\label{principal-nest} We assume that $T_f(c^*)$ is persistently recurrent.
Since $\Gamma$ is a $p$-admissible puzzle for $f$, we see that
$P_{d_0+p}(c^*)\Subset P_{d_0}(c^*)$ for some $d_0\geq 0$.  The recurrence of $T_f(c^*)$ allows us to find infinitely many integers $L\geq d_0$
so that $P_{L+p}(c^*)\Subset P_{L}(c^*)$.

Sometimes, we work with $g=f^p$.
It's critical set  
$C(g)=\bigcup_{0\leq k<p} f^{-k}(c^*)$.
View $\Gamma$ as a graph of $g$, one can define the puzzle pieces of $g$ induced by $\Gamma$. The tableau $T_g(z)$ consists of the 
$p\mathbb N\times p\mathbb N$-positions of the tableau $T_f(z)$.

 We may decompose $C(g)=C_0(g)\sqcup C_1(g)$, where 
 $$C_0(g)=\{\zeta\in C(g); \zeta\xrightarrow{g} \zeta\}, \  C_1(g)=C(g)-C_0(g).$$
 
\begin{lem} \label{pr-critical}
 Assume that $T_f(c^*)$ is persistently recurrent, then $c^*\in C_0(g)$.
\end{lem}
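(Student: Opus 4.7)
The plan is to establish $c^*\xrightarrow{g}c^*$, which unwinds to the statement that for every $n\geq 0$ there exists $j\geq 1$ with $g^j(c^*)=f^{pj}(c^*)\in P_{np}(c^*)$ (here puzzle pieces are those of $f$; a $g$-piece of depth $n$ equals $P_{np}$). The starting observation I would exploit is that the $p$-admissibility of $\Gamma$ already delivers one concrete $g$-return: the compact inclusion $\overline{P_{d_0+p}(c^*)}\subset P_{d_0}(c^*)$ makes $g=f^p$ a proper branched cover from $P_{d_0+p}(c^*)$ onto $P_{d_0}(c^*)$ ramified at $c^*$, and in particular $g(c^*)\in P_{d_0}(c^*)$.

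Next I would extract a $g$-return from the $f$-recurrence. Persistent recurrence gives $c^*\xrightarrow{f}c^*$: for every depth $N$ there are infinitely many $k>0$ with $f^k(c^*)\in P_N(c^*)$. Partitioning these return times by their residue modulo $p$, a pigeonhole plus diagonal extraction produces a fixed residue $r\in\{0,1,\ldots,p-1\}$ such that, for every $n$, infinitely many $k\equiv r\pmod p$ satisfy $f^k(c^*)\in P_{np}(c^*)$. Writing such $k$ as $pj+r$, this becomes $g^j(f^r(c^*))\in P_{np}(c^*)$, i.e.\ $c^*\in\omega_g(f^r(c^*))$, where $\omega_g$ denotes the combinatorial $\omega$-limit under $g$. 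If $r=0$ we are already done.

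The crux is to rule out $r\neq 0$. For this I would invoke the standard consequence of persistent recurrence that the combinatorial $\omega$-limit set $\omega_f(c^*)$ is $f$-minimal (see e.g.\ \cite{KSS}). Writing $A_r:=\omega_g(f^r(c^*))$, one has $f(A_r)\subset A_{r+1\bmod p}$ and $\omega_f(c^*)=\bigcup_r A_r$, so $f$-minimality forces the $A_r$ either to be all equal or to be pairwise disjoint and cyclically permuted by $f$. The first case is immediate. In the second, the anchor $g(c^*)\in P_{d_0}(c^*)$ from the opening step places the $g$-orbit of $c^*$ inside the piece of the decomposition containing $c^*$, which is $A_0=\omega_g(c^*)$ by definition, giving $c^*\in A_0$ as required.

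The hard part will be this last step: converting the soft cyclic minimality statement into the concrete conclusion that the pigeonhole residue $r$ must equal $0$. The key ingredient that should make it go through is precisely the $p$-admissibility, which guarantees the initial $g$-return $g(c^*)\in P_{d_0}(c^*)$ identifying which of the (possibly $p$ disjoint) components of $\omega_f(c^*)$ contains $c^*$. This role is also the reason Lemma \ref{pr-critical} is placed immediately after Theorem \ref{adm-puzzle} and before the principal-nest construction of Section \ref{p-r-case}: it validates working directly with $g$ rather than $f$ throughout the persistently recurrent analysis.
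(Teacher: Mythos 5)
Your opening observation is incorrect, and since you flag it yourself as the load-bearing step (``the key ingredient that should make it go through is precisely the $p$-admissibility''), the proof does not go through. The inclusion $P_{d_0+p}(c^*)\subset P_{d_0}(c^*)$ holds automatically for every $d_0$: a deeper critical piece always sits inside a shallower one. What $p$-admissibility in Theorem~\ref{adm-puzzle} adds is that this inclusion is \emph{compact}, i.e.\ the annulus $P_{d_0}(c^*)\setminus\overline{P_{d_0+p}(c^*)}$ is non-degenerate --- a geometric statement with no content about where $g$ sends the piece. In fact $g=f^p$ maps $P_{d_0+p}(c^*)$ onto the depth-$d_0$ piece $P_{d_0}(f^p(c^*))$, and this equals $P_{d_0}(c^*)$ if and only if $(d_0,p)$ is a $c^*$-critical position of $T_f(c^*)$, which is an additional return hypothesis that neither admissibility nor recurrence supplies. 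So the claimed ``proper branched cover from $P_{d_0+p}(c^*)$ onto $P_{d_0}(c^*)$'' and the anchor $g(c^*)\in P_{d_0}(c^*)$ are not available.

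Without the anchor, your final step --- ruling out $r\neq 0$ when the sets $A_r=\omega_g(f^r(c^*))$ are pairwise disjoint --- has nothing to stand on, and the inference there would be too weak anyway: a single return of $g(c^*)$ to a fixed finite depth $d_0$ does not locate $c^*$ in the decomposition, since $\omega_g(c^*)$ is determined by returns at arbitrarily large depth. For comparison, the paper's own proof is different in kind: it pigeonholes a residue class $l$, then pigeonholes again to produce a single $g$-critical point $\zeta\in f^{-l}(c^*)$ whose positions appear at arbitrarily large depth in the $p\mathbb{N}$-columns of $T_f(c^*)$, and transports the resulting $g$-recurrence through $f^l$ using the tableau rules --- no minimality of the combinatorial $\omega$-limit and no geometric input from admissibility are invoked.
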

\begin{proof} The recurrence of  $T_f(c^*)$ implies that 
there is an integer $0\leq l \leq p$, so that the $(l+p\mathbb{N})$-columns  of the tableau $T_f(c^*)$ contain 
$c^*$-positions of arbitrarily large depth.  
It follows that one can find $\zeta\in f^{-l}(c^*)$, so that the $p\mathbb{N}$-columns  of the tableau $T_f(c^*)$ contain 
$\zeta$-positions of arbitrarily large depth.  
   This means  that $\zeta\xrightarrow{g} \zeta$. 
By tableau rules,  $f^l(\zeta)\xrightarrow{g} f^l(\zeta)$.
\end{proof}
 
 
%
%
%
%
%
%
 

For any $\zeta\in C_0(g)$, clearly $\zeta\xrightarrow{g}c^*$. On the other hand, by Lemma \ref{pr-critical} and tableau rules,  
 we see that $c^*\xrightarrow{g}\zeta$.  So we have $[c^*]_g=\{\zeta\in C_0(g); c^*\xrightarrow{g}\zeta\}$. 
 Let ${\rm orb}_g([c^*]_g)=\bigcup_{k\in\mathbb N}g^k([c^*]_g)$. Clearly ${\rm orb}_g([c^*]_g)\subset {\rm orb}(c^*)\cup C(g)$.

\vspace{5 pt}

We may assume the graph $\Gamma$ (by choosing $L_0\geq d_0$ suitably) satisfy that

\vspace{5 pt}

\textbf{A1}.  $P_{L_0+p}(c^*)\Subset P_{L_0}(c^*)$. 

\textbf{A2}.  For any $\zeta_1, \zeta_2\in C(g)$ with $\zeta_1\neq \zeta_2$, one has $P_{L_0}(\zeta_1)\neq P_{L_0}(\zeta_2)$.

\textbf{A3}. For any $\zeta_1, \zeta_2\in C(g)$ (not necessarily distinct), if they do not satisfy $\zeta_2\xrightarrow{g}\zeta_1$,
 then the $\{L_0\}\times p\mathbb{N}^+$ positions of 
 $T_f(\zeta_2)$ are not $\zeta_1$-positions.

\vspace{5 pt}

%
%
%
%
%
%
%
%
%
%

The assumption A1 implies that the puzzle piece $Y_0(c^*)=P_{L_0+p}(c^*)$
satisfies $g(\partial Y_0(c^*))\cap \partial Y_0(c^*)=\emptyset$. In literature, a puzzle piece $Y$ satisfying $g(\partial Y)\cap \partial Y=\emptyset$ is called {\it nice}.
Nice puzzle piece allows one to construct the {\it principle nest}, whose significant properties are summarized  as follows

%
%
%
%

%
%
%

\begin{thm} \label{KSS} Assume $T_f(c^*)$ is persistently recurrent and the puzzle $\Gamma$ satisfies  A1, A2, A3. Then 
there exist a nest of $c^*$-puzzle pieces 
$$Y_{0}(c^*)\Supset Y_{1}(c^*)\Supset Y'_{1}(c^*)\Supset Y_{2}(c^*)\Supset Y'_{2}(c^*)\Supset\cdots,$$
each is a suitable pull back of $Y_{0}(c^*)$,
satisfying the following properties:

(1). There exist integers $D_0>0$, $n_j>m_j\geq 1$ for all $j\geq 1$, so that
$$g^{m_j}: Y'_{j}(c^*)\rightarrow Y_{j}(c^*),  \ g^{n_j}: Y_{j+1}(c^*)\rightarrow Y_{j}(c^*)$$
are proper maps of degree $\leq D_0$, and $g^{n_j}(Y'_{j+1}(c^*))\subset Y'_{j}(c^*)$.
%

%


(2). For  all $j\geq 1$,
$$(Y_{j}(c^*)- \overline{Y'_{j}}(c^*))\cap {\rm orb}_g([c^*]_g)=\emptyset.$$

(3). There is a constant $\nu>0$ so that for all $j\geq 1$,
 $${\rm mod}(Y_{j}(c^*)- \overline{Y'_{j}}(c^*))\geq \nu.$$

(4). There is a constant $C_0>0$ so that for all $j\geq 1$,
$$\mathbf S(Y' _j(c^*), c^*)\leq C_0.$$

Moreover, for $j\geq1$, there is another  $c^*$-piece $Y_j''(c^*)\Subset Y_j'(c^*)$ with 
$$(Y'_{j}(c^*)-\overline{Y''_{j}}(c^*))\cap {\rm orb}_g([c^*]_g)=\emptyset \text{ and }  {\rm mod}(Y'_{j}(c^*)- \overline{Y''_{j}}(c^*))\geq \nu.$$

\end{thm}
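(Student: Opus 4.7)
\emph{Proof plan.} The plan is to carry out the principal nest construction of Kahn--Lyubich and Kozlovski--Shen--van Strien, adapted to $g=f^p$ acting on the puzzle induced by the $p$-admissible graph $\Gamma$. The nice puzzle piece $Y_0(c^*)=P_{L_0+p}(c^*)$ (guaranteed by A1) is the starting point, and assumptions A2, A3 together with persistent recurrence (Lemma \ref{pr-critical}) provide the combinatorial control that converts the construction into uniform analytic estimates.

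\textbf{Step 1: first-return/first-landing machinery.} For a nice critical puzzle piece $V$ containing $c^*$, let $\mathcal{R}_V$ denote the first-return map to $V$ (defined on the union of puzzle pieces $W\subset V$ such that $g^{r(W)}(W)=V$ with $r(W)$ minimal, and the iterates $g^j(W)$, $0\le j<r(W)$, avoid $V$). Niceness forces $\mathcal{R}_V$ on each component to be a proper map onto $V$. Let $\mathcal{L}_V$ be the corresponding first-landing map on the union of pieces outside $V$ that eventually enter $V$. Persistent recurrence combined with A3 shows that the number of distinct critical pieces $P_{L_0}(\zeta)$, $\zeta\in[c^*]_g$, is bounded, and that each such piece has only finitely many children; set $D_0$ to be a uniform bound on the degree of any first-return branch $\mathcal{R}_V$ at a component containing a point of $[c^*]_g$. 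This bound is inherited from $V=Y_0(c^*)$ to all descendants because each new nice piece in the nest is again critical and its set of children injects into the children of $Y_0(c^*)$ (the standard ``finitely many children'' argument).

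\textbf{Step 2: recursive construction.} Define inductively
\[
Y_{j}'(c^*)=\text{component of }\mathrm{Dom}(\mathcal{R}_{Y_j(c^*)})\text{ containing }c^*,
\qquad m_j=\text{first return time},
\]
and then take $Y_{j+1}(c^*)$ to be the component of $g^{-m_j}(Y_j'(c^*))$ containing $c^*$, so that $g^{n_j}:Y_{j+1}(c^*)\to Y_{j}(c^*)$ with $n_j=2m_j$. Property (1) and the inclusion $g^{n_j}(Y_{j+1}'(c^*))\subset Y_j'(c^*)$ follow directly from the definitions together with Step 1. Property (2) is automatic: any point of $\mathrm{orb}_g([c^*]_g)$ in $Y_j(c^*)\setminus\overline{Y_j'(c^*)}$ would contradict the minimality in the definition of $\mathcal{R}_{Y_j(c^*)}$ and the fact that orbits of $[c^*]_g$ accumulate on $c^*$ combinatorially; the same argument applied one level deeper produces the auxiliary piece $Y_j''(c^*)$.

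\textbf{Step 3: uniform modulus.} The heart of the proof is property (3). I would invoke the Kahn--Lyubich quasi-additivity / covering lemma on the sequence of degree-bounded pullbacks obtained in Step 2. Because each $Y_{j+1}(c^*)\to Y_j(c^*)$ has degree $\le D_0$, because $(Y_j\setminus\overline{Y_j'})\cap\mathrm{orb}_g([c^*]_g)=\emptyset$, and because the nest is strictly decreasing, one obtains a $j$-independent lower bound $\nu>0$ for $\mathrm{mod}(Y_j(c^*)\setminus\overline{Y_j'(c^*)})$. In the standard argument one distinguishes two cases: if the moduli increase infinitely often, one uses the Gr\"otzsch inequality to close up; if they stay small, the covering lemma forces a definite amount of modulus to be created at each step. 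This is the main obstacle, and the combinatorial inputs A1--A3 are used precisely to keep the degrees and the number of critical values inside $Y_j(c^*)$ under control across the pullback.

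\textbf{Step 4: shape bound.} Property (4) is a consequence of (3). Once $\mathrm{mod}(Y_j(c^*)\setminus\overline{Y_j'(c^*)})\ge\nu$ holds for all $j$, the piece $Y_j'(c^*)$ has $K(\nu)$-bounded shape with respect to any of its interior points, in particular with respect to $c^*$; this is the standard consequence of the Gr\"otzsch-type bound relating modulus and the ratio of inner/outer radii of an annular neighborhood. Applying the same estimate to the pair $Y_j'(c^*)\supset\overline{Y_j''(c^*)}$ gives the quoted modulus bound for the auxiliary piece.

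\textbf{Main obstacle.} Step 3 is by far the most delicate: the uniform modulus bound $\nu>0$ is the KSS/Kahn--Lyubich theorem, and its verification in this context requires either (i) a direct application of the covering lemma once the bounded-degree, bounded-multiplicity structure of the nest is established, or (ii) a combinatorial/geometric analysis of the ``enhanced nest'' to produce definite modulus increments. The preparatory assumptions A1--A3 are tailored precisely to make the hypotheses of the covering lemma available, so most of the technical work lies in converting the combinatorial statement (finitely many children) into the analytic estimate (definite modulus gain).
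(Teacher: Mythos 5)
The paper does not prove Theorem \ref{KSS} itself; it explicitly delegates the construction of the nest to Kahn--Lyubich and Kozlovski--Shen--van Strien, the complex bounds to Kahn--Lyubich, Kozlovski--van Strien, and Qiu--Yin, and the shape bound (4) to Yin--Zhai, referring the reader to those works. So a blind sketch is being compared not with a proof in the paper but with the cited literature. With that caveat, your plan follows the right references, but two of your reductions are not correct as stated.

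First, you claim property (2) is \emph{automatic} from the first-return definition of $Y_j'$ and minimality. It is not. The domain of $\mathcal{R}_{Y_j}$ has many components; $Y_j'$ is merely the one containing $c^*$. A recurrent point $\zeta\in[c^*]_g$ (or some image $g^k(\zeta)$) that enters $Y_j$ may perfectly well lie in a different return component, or in the non-returning part of $Y_j\setminus\overline{Y_j'}$, without contradicting anything. Preventing this is precisely why KSS build the \emph{enhanced} nest via the operators usually denoted $\Gamma$ and $B$, iterated a number of times tied to $\#[c^*]_g$, rather than a single first-return pullback as in your Step~2; your recursion $Y_{j+1}=\mathrm{comp}_{c^*}(g^{-m_j}Y_j')$ with $n_j=2m_j$ is too naive to secure (2), and without (2) the degree bound $D_0$ in (1) and the modulus argument in Step~3 both fall apart (critical values accumulating in the off-critical annulus are exactly what the covering lemma cannot tolerate). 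Second, your Step~4 derives the shape bound $\mathbf S(Y_j'(c^*),c^*)\le C_0$ as a ``standard consequence'' of the modulus bound (3). This is false: a large modulus of $Y_j\setminus\overline{Y_j'}$ says $Y_j'$ is well inside $Y_j$, but places no constraint whatsoever on the eccentricity of $Y_j'$ itself or on the location of $c^*$ inside it (a thin slit domain inside a huge disk has huge shape and huge surrounding modulus simultaneously). Bounded shape is a genuinely separate dynamical estimate; it is the content of Yin--Zhai [YZ, Lemma~6 and Prop.~1], which uses the bounded-degree return maps $g^{m_j}\colon Y_j'\to Y_j$ together with the moduli to control the geometry, and cannot be replaced by a Gr\"otzsch-type argument alone.
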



The construction of the principal nest is attributed to Kahn-Lyubich \cite{KL1} 
in the unicritical case, Kozlovski-Shen-van Strien \cite{KSS} in the multicritical case.  The complex bounds are proven by Kahn-Lyubich \cite{KL1, KL2} (unicritical case), Kozlovski-van Strien \cite{KS} and Qiu-Yin \cite{QY} independently (the multicritical case). 
The bounded geometry property (4) is derived by  Yin-Zhai \cite[Lemma 6 and Prop.1]{YZ}\footnote{The notations 
$Y_j(c^*), Y'_j(c^*),  Y''_j(c^*)$ here correspond to $K'_n, K_n, \widetilde K_n$ in \cite{YZ}.}.
See these references for  a detail construction of the nest and the proof of its properties.
 

%
%



 \begin{rmk}
 Theorem \ref{KSS} with the assumptions $A2,A3$, implies that
  $$(Y_{j}(c^*)- \overline{Y''_{j}}(c^*))\cap {\rm orb}(c^*)=\emptyset.$$
 \end{rmk}

  \subsection{Proof of Theorems \ref{c-rigidity} and \ref{cubic-property}:  persistently recurrent case. } \label{p-r-case}
  
  Assume $T_f(c^*)$ is persistently recurrent.
  Recall that  $\mathbf{P}_k=f^{-k}(X-\Gamma)$  and 
  $\phi: \bigcup_{k\geq 0}\Gamma_k\rightarrow \bigcup_{k\geq 0}\widetilde\Gamma_k$ is a 
  homeomorphism induced by the combinatorial equivalence. Our proof follows the strategy of \cite{AKLS} 
and has six steps.
  
%

  \vspace{4pt}
     
 \textbf{Step 1: Construction of qc maps at any depth.}   {\it For any $n\geq 0$, there is a qc-map $\phi_n:\mathbb C\rightarrow \mathbb C$, so that $\tilde f\circ \phi_n=\phi_{n}\circ f$ on $\mathbb C-\mathbf{P}_{n}$.}
   

      Note that $\phi|_{\mathbb C-\mathbf P_0}$ is the identity map in B\"ottcher coordinates. 
The restriction $\phi|_{\mathbb{C}-\mathbf P_0}$ can be extended to a qc map $\phi_0: (\mathbb C, f(c^*))\rightarrow  (\mathbb C, \tilde f(\tilde c^*))$. Then there is a qc map $\phi_1: \mathbb C\rightarrow  \mathbb C$
so that $\tilde f\circ \phi_1=\phi_0\circ f$ and $\phi_1|_{\mathbb C-\mathbf{P}_0}=\phi_0|_{\mathbb C-\mathbf{P}_0}$. We may adjust $\phi_1$ so that $\phi_1(f(c^*))=\tilde f(\tilde c^*)$. This allows us to get a lift $\phi_2$ of $\phi_1$, in the way that
 $\tilde f\circ \phi_2=\phi_1\circ f$ and $\phi_2|_{\mathbb C-\mathbf{P}_1}=\phi_1|_{\mathbb C-\mathbf{P}_1}$.
By induction, for any $n$, there is a qc-map $\phi_{n+1}$, such that
 $\tilde f\circ \phi_{n+1}=\phi_{n}\circ f$ and $\phi_{n+1}|_{\mathbb C-\mathbf{P}_{n}}=\phi_{n}|_{\mathbb C-\mathbf{P}_{n}}$.
%
   We remark that the dilatations of $\phi_n$'s might not be uniformly bounded, to overcome this, we prove:
   
   \vspace{4pt}
   
  \noindent
  
  \textbf{Step 2: Bounding dilatation by critical piece.} {\it For any $j\geq 1$, 
  if $\phi|_{\partial Y_j(c^*)}$ has a $K$-qc-extension 
  $\Phi_j: \overline{Y_j}(c^*)\rightarrow \overline{\widetilde Y_j}(\tilde c^*)$, then it has a further  $K$-qc-extension 
  $H_j: \mathbb C\rightarrow \mathbb C$ such that
$\tilde f \circ H_j=H_j\circ f$ on $\mathbb C-Y_{j}(c^*)$.}
  
%
%

       
  To prove the implication, let's define 
      $$Z=\bigcup_{k\geq 1}f^{-k}(Y_j(c^*))-Y_j(c^*).$$ 
      For each component $U$ of $Z$, there is an integer $l=l(U)\geq 1$ (called {\it return time}) so that $f^l: U\rightarrow Y_j(c^*)$ is conformal (clearly its counterpart $\tilde f^l: \widetilde U\rightarrow \widetilde Y_j(\tilde c^*)$ is also conformal). 
       We define $H_j|_U: U\rightarrow \widetilde U$ by
       $$H_j|_U= (\tilde f^l|_{\widetilde U})^{-1}\circ \Phi_j\circ f^l|_{U}.$$
%
%
%
%
On $F(f)-Z$ (where $F(f)$ is the Fatou set of $f$), we may define $H_j$ to be identity map in the B\"ottcher coordinates, hence conformal. These maps match on the common boundary of the pieces $U$. Since the residual set 
$$J_{{\rm res}}=\bigcap_{k\geq 0}f^{-k}(J(f)-Y_{j}(c^*))$$
is hyperbolic hence has zero Lebesgue measure,  we see that $H_j$ admits a qc-extension to the plane, with the same dilatation as that of $\Phi_j$.  

 \vspace{4pt}   
      
  \noindent
  
  \textbf{Step 3. An induction procedure. }  {\it  For any $j\geq 1$, we have that
       $$\boxed{\begin{array}{cc}\phi|_{\partial Y_j(c^*)} \text{ has a qc-extension }   \\
\Phi_j: Y_j(c^*)\rightarrow \widetilde Y_j(\tilde c^*)  \end{array} }\Longrightarrow \boxed{ \begin{array}{cc}
 \phi|_{\partial Y_{j+1}(c^*)} \text{ has a qc-extension }   \\
\Phi_{j+1}: Y_{j+1}(c^*)\rightarrow \widetilde Y_{j+1}(\tilde c^*) 
\end{array} }$$
  and the dilatations $K_j, K_{j+1}$ of $\Phi_j, \Phi_{j+1}$ satisfy 
     $$K_{j+1}\leq \max\{K_{j}, K(\nu, \tilde\nu, D_0)\}\leq \max\{K_{1}, K(\nu, \tilde\nu, D_0)\},$$
     where $\nu, \tilde\nu, D_0$ are given by Theorem \ref{KSS}, and $K(\nu, \tilde\nu, D_0)$ depends on them.
      }

       For each $j\geq 1$, let $\psi_j: (Y_{j}(c^*), c^*)\rightarrow (\mathbb{D}, 0)$ be a conformal map, then 
       $G_j=\psi_j\circ g^{n_j}\circ \psi_{j+1}^{-1}: \mathbb{D}\rightarrow \mathbb{D}$ is proper holomorphic, and $2\leq {\rm deg}(G_j)\leq D_0$.

Let $\Omega_j=\psi(Y'_{j}(c^*))$ for $j\geq 1$. By Theorem \ref{KSS},
we have $G_j(\Omega_{j+1})\subset \Omega_j$, the critical values of $G_j$ are in $\Omega_j$ (in particular $G_j(0)\in \Omega_j$),
and
$${\rm mod}(\mathbb D-\overline{\Omega}_j)={\rm mod}(Y_{j}(c^*)- \overline{Y'_{j}}(c^*))\geq \nu.$$
So there is a constant $\rho(\nu)\in(0,1)$ with ${\Omega}_j\subset\overline{\mathbb D}_{\rho(\nu)}$.

Let's define $h_j, h_{j+1}: \partial \mathbb D\rightarrow  \partial \mathbb D$ by
$$h_j=\widetilde \psi_j\circ \phi|_{\partial Y_j(c^*)}\circ \psi^{-1}_{j}, \ 
h_{j+1}=\widetilde \psi_{j+1}\circ \phi|_{\partial Y_{j+1}(c^*)}\circ \psi^{-1}_{j+1}.$$
Clearly $h_j\circ G_j= \widetilde G_j\circ h_{j+1}$.
By induction hypothesis, $h_j$ has an extension $L_j=\widetilde \psi_j\circ \Phi_j \circ \psi^{-1}_{j}$. 
Note that $G_j, \tilde G_j, h_j, h_{j+1}$ satisfy the assumptions in Lemma \ref{qc-extension}. Let 
 $$\rho=\max\{\rho(\nu), \rho(\tilde\nu)\}, \ r_0=\min\{r(\rho, d); 2\leq d\leq D_0\} \in (\rho,1),$$
 where $r(\rho, d)$'s are given by Lemma \ref{qc-extension}.
 Assume that $L_j$ is identity\footnote{This assumption is satisfied by making $L_1$ satisfy $L_1|_{\mathbb D_{r_0}}=id$ and by induction.}  on $\mathbb D_{r_0}$,
 then by Lemma \ref{qc-extension}, $h_{j+1}$ has a  qc extension $L_{j+1}$, which is identity on  $\mathbb D_{r_0}$, with dilatation $K_{j+1}\leq \max\{K_j, K(\nu, \tilde\nu, D_0)\}$.
 Finally, we take $\Phi_{j+1}=\widetilde \psi^{-1}_{j+1}\circ L_{j+1}\circ \psi_{j+1}$
 and get an extension of $\phi|_{\partial Y_{j+1}(c^*)}$.

%

 \vspace{4pt}  
 
 \noindent
 
 \textbf{Step 4: Conjugacy via taking a limit.}

 \vspace{4pt}  
 
 By Step 3,   the map $\phi|_{\partial Y_j(c^*)}$ has a 
       qc extension $\Phi_j: Y_j(c^*)\rightarrow \widetilde Y_j(\tilde c^*)$, with dilatation $K_*=\max\{K_1, K(\nu, \nu', D_0)\}$. By Step 2, there is an extension $H_j$ of $\Phi_j$, conjugate $f$ to $\tilde f$ on $\mathbb C-Y_{j}(c^*)$, without increasing the dilatation of $\Phi_j$. Then $\{H_j; j\geq 1\}$ is a normal family, whose limit is a $K_*$-qc map $H$, satisfying $\tilde f\circ H=H\circ f$
     on the Fatou set of $f$. Since $J(f)$ has no interior, $H$ is a conjugacy on $\mathbb C$ by continuity. 
     
\vspace{4pt} 

\noindent

\textbf{Step 5: $f$ carries no invariant line fields on  $J(f)$.} 

\vspace{4pt}

First note that the set
$$ X_\infty:=\bigcup_{j\geq 1}\bigcap_{k\geq 1}f^{-k}(J(f)-Y_{j}(c^*))$$
has Lebesgue measure zero and $\Gamma_\infty\cap J(f)\subset X_\infty$. Suppose that $f$ carries an invariant line field $\mu$.
Let $z\in {\rm supp}(\mu)\cap  (J(f)-X_\infty)$.
Clearly, $T_f(z)$ is critical.  Let $Y_{j}(c^*), Y'_{j}(c^*), Y''_{j}(c^*)$ be given by Theorem \ref{KSS}, and write 
$$Y_{j}(c^*)=P_{d_j}(c^*), Y'_{j}(c^*)=P_{d'_j}(c^*), Y''_{j}(c^*)=P_{d''_j}(c^*).$$ Let $s_j\geq 0$ be the first integer with $f^{s_j}(P_{d''_j+s_j}(z))=P_{d''_j}(c^*)$. 
By Theorem \ref{KSS} (2)(4), one has 
$f^{s_j}(P_{d_j+s_j}(z))=P_{d_j}(c^*)$, $f^{s_j}(P_{d'_j+s_j}(z))=P_{d'_j}(c^*)$, and $f^{s_j}|_{P_{d_j+s_j}(z)}$ is conformal (in particular, if $z\in P_{d''_j}(c^*)$, then $s_j=0$ and $f^{s_j}=id$),
we have
$${\rm mod}(P_{d_j+s_j}(z)-\overline{P_{d'_j+s_j}}(z))\geq \nu, \ {\rm mod}(P_{d'_j+s_j}(z)-\overline{P_{d''_j+s_j}}(z)) \geq \nu.$$
It follows that 
\bess
\mathbf S(P_{d'_j+s_j}(z),z)&\leq& C_1(\nu) \mathbf S(Y_{d'_j}(c^*), f^{s_j}(z)) \ (\text{by Koebe distortion})\\ 
&\leq& C_1(\nu)C_2(\nu) \mathbf S(Y_{d'_j}(c^*), c^*) \ (\text{by \cite[Thm 2.5]{Mc}})\\
&\leq& C_1(\nu)C_2(\nu)C_0,
\eess
where  $C_1(\nu), C_2(\nu)$ are constants depending on $\nu$, and $C_0$ is given by Theorem \ref{KSS}.
To apply Lemma \ref{nilf}, we take $V_j=P_{d'_j+s_j}(z)$. It remains to find $U_j$. Let $t_j>0$ be the first integer such that  $f^{t_j}(P_{d''_j+t_j}(c^*))=P_{d''_j}(c^*)$ and  $r_j\geq 0$
be the first integer such that $f^{r_j}(P_{d''_j+t_j+r_j}(z))=P_{d''_j+t_j}(c^*)$.
Again Theorem \ref{KSS} (2)(4) assert that 
$f^{r_j}(P_{d_j+t_j+r_j}(z))=P_{d_j+t_j}(c^*)$ and $f^{r_i}(P_{d'_j+t_j+r_j}(z))=P_{d'_j+t_j}(c^*)$. 
Similarly as above, one has  
\bess
\mathbf S(P_{d'_j+t_j+r_j}(z),z)\leq C_1(\nu)C_2(\nu)C_0.
\eess
We take $U_j=P_{d'_j+t_j+r_j}(z)$, and $h_j=f^{t_j+r_j-s_j}|_{U_j}$ (one may verify that $s_j\leq r_j$). Then 
 $h_j: U_j\rightarrow V_j$ is of degree two, and 
satisfies the assumptions of Lemma \ref{nilf}. Hence $\mu$ is not almost continuous at $z$, which gives a contradiction.  The proof of Step 5 is completed.
 
 It follows that the qc conjugacy $H$ obtained in Step 4 is conformal, and $H(z)=z+O(1)$ near $\infty$, therefore 
 $H(z)=z$ and $f=\tilde f$. The proof of Theorem \ref{c-rigidity} in the persistently recurrent case is finished.
     
\vspace{4pt} 

\noindent

\textbf{Step 6: Local connectivity of $J(f)$.} Note that for any $z\in J(f)$ and any $n\geq 0$, the intersection ${P^*_{n}(z)}\cap J(f)$ is connected (because each connected component of ${P^*_{n}(z)}-J(f)$ is simply connected). It suffices to show that
${\rm Imp}(z)=\{z\}$. Let $d_j,d_j',d_j''$ be given in Step 5.

For  $z\in J(f)-\Gamma_{\infty}$ with $T_f(z)$ critical, let $l_j\geq 0$ be an integer so that
  $f^{l_j}: P_{d'_j+l_j}(z)\rightarrow P_{d'_j}(c^*)$ is conformal. Clearly $f^{l_j}: P_{d_j+l_j}(z)\rightarrow P_{d_j}(c^*)$ is also conformal, by Theorem \ref{KSS} (2). Therefore ${\rm mod}(P_{d_j+l_j}(z)-\overline{P_{d'_j+l_j}}(z))\geq \nu$, and hence 
  ${\rm Imp}(z)=\bigcap  \overline{P_{d_j+l_j}}(z)=\{z\}$.  In particular, ${\rm Imp}(c^*)=\{c^*\}.$

  For $z\in J(f)-\Gamma_{\infty}$ with $T_f(z)$ non critical,  or $z\in \Gamma_{\infty}\cap J(f)$,
  the proof of the fact ${\rm Imp}(z)=\{z\}$ is the same as the quadratic case \cite{M4}.
  This case involves the so called {\it thickened puzzle piece} technique, see \cite{M4} for its construction and 
   \cite[Lemmas 1.6 and 1.8, Theorem 1.9]{M4} for its applications.  For this, we skip the details.

%
%
%
%
%
%
%
%
%
%
The proof of Theorem \ref{cubic-property} in the persistently recurrent case is finished.
 \hfill\fbox

\vspace{4pt}

 \subsection{Proof of Theorems \ref{c-rigidity} and \ref{cubic-property}: other cases.}\label{other-case} In this part, we assume $T_f(c^*)$ is either  reluctantly recurrent or non recurrent. For  Theorem \ref{cubic-property}(2), a stronger fact that $J(f)$ has zero Lebesgue measure is proven.
 
Lemmas \ref{qc-c} and \ref{regular} will take effect in the proof. 
To verify the assumptions of these lemmas, we first show:

\begin{lem}\label{red-assump} For any $z\in J(f)$, there exist integers 
$D, m>0$(both independent of $z$)  and $n_j$'s, Jordan disks $U_j(z)\Supset U'_j(z)\Supset U''_j(z)$'s and $V_z\Supset V_z'\Supset V''_z$ such that 

1. $\{f^{n_j}(z);j\geq 1\}\subset V''_z$.

2. ${\rm deg}(f^{n_j}: U_j(z)\rightarrow V_z) \leq D$ for all $j\geq1$.

3. ${\rm mod}(V_z-\overline{V'_z})\geq m, \ {\rm mod}(V'_z-\overline{V''_z}) \geq m$.
\end{lem}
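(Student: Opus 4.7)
The plan is to build a target triple $(V_z, V'_z, V''_z)$ for each $z\in J(f)$, chosen from a universal finite list of Jordan disks, and then to pull it back along a subsequence of iterates with degree bounded by a universal constant $D$. The main input is the non-persistent recurrence of $T_f(c^*)$, which supplies uniform combinatorial control on how critical orbits revisit critical puzzle pieces, together with the finiteness of $C(g)=\bigcup_{0\leq k<p}f^{-k}(c^*)$.

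First, I would construct a finite family $\mathcal{F}=\{(Q_i,Q_i',Q_i'')\}_{i=1}^{N_0}$ of nested critical puzzle pieces with ${\rm mod}(Q_i\setminus\overline{Q_i'})\geq m$ and ${\rm mod}(Q_i'\setminus\overline{Q_i''})\geq m$ for a uniform $m>0$. In the non-recurrent case, for each $c'\in C(g)$ in the accumulation set one chooses a deep nice critical piece $Q(c')$ whose forward orbit eventually leaves $Q(c')$, together with two further subpieces $Q'(c')\Supset Q''(c')$ providing the modulus gap. In the reluctantly recurrent case, some $P_n(c_1)$ with $c_1\in[c^*]_f$ has infinitely many children, each a preimage of $P_n(c_1)$ by a proper map of degree at most $\max_{c\in C(f)}\deg_c(f)$; one selects nested subpieces from the resulting nest with universal modulus gap. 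Finiteness of $\mathcal{F}$ follows from $|C(g)|<\infty$.

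Next, case-split on the tableau $T_f(z)$. If $T_f(z)$ is non-critical, then some $P_N(f^{k_0}(z))$ has all forward iterates avoiding $C(f)$, so the pullbacks of such pieces along $f^k$ are conformal for $k\geq k_0$. By pigeonhole among the finitely many depth-$N$ puzzle pieces, infinitely many $f^{n_j}(z)$ lie in a common piece, and a nested triple at depths $N,N+p,N+2p$ drawn from the universal list supplies $(V_z,V_z',V_z'')$ with degree bound $D=1$. If $T_f(z)$ is critical, pick $(V_z,V_z',V_z'')=(Q_i,Q_i',Q_i'')\in\mathcal{F}$ where $f^{n_j}(z)\in Q_i''$ infinitely often, and take $U_j(z)$ to be the component of $f^{-n_j}(V_z)$ containing $z$; the degree of $f^{n_j}|_{U_j(z)}$ factors as a product of local degrees at critical points visited along the pullback.

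The main obstacle is establishing the universal bound $D$ on this product of local degrees, a priori unbounded as $j\to\infty$ when the orbit passes through critical pieces repeatedly. This is where non-persistent recurrence enters decisively: in the reluctantly recurrent case the children of the relevant critical piece are preimages by maps whose ramification is localised at $C(g)$, so each re-entry to the nest multiplies the degree by a bounded factor and, by the child structure, the number of such re-entries before the pullback becomes conformal is itself bounded in terms of the combinatorics of $\mathcal{F}$; in the non-recurrent case each $Q_i$ is visited only finitely many times by any critical orbit, giving an absolute ceiling on the number of critical factors along $f^{n_j}$. The technical extension to $z\in\Gamma_\infty\cap J(f)$ proceeds by replacing the unique piece containing $z$ with the finite union of pieces meeting $z$ on their boundaries, applying the same argument on each piece to recover the conclusion with the same universal $D$ and $m$.
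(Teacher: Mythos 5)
The skeleton of your proposal---a finite universal family of nested triples, a case split on whether $T_f(z)$ is critical, and a pullback argument---matches the paper's structure. However, the decisive step, namely the uniform bound on $D$, is not correctly argued, and this is precisely the technical heart of the lemma.

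You claim that in the reluctantly recurrent case ``the number of such re-entries before the pullback becomes conformal is itself bounded in terms of the combinatorics of $\mathcal{F}$,'' and in the non-recurrent case there is ``an absolute ceiling on the number of critical factors along $f^{n_j}$.'' Both statements are false when $T_f(z)$ is critical: by definition this means every sufficiently deep column of $T_f(z)$ contains critical positions, so the number of critical positions encountered along the pullback $f^{-n_j}$ grows without bound as $j\to\infty$. What must be bounded is the \emph{degree}, not the number of returns, and those are very different things because the tableau rules (R1, R2) force repeated critical positions to be ``shadowed'' by the tableau of $c^*$ rather than contributing fresh factors. In the non-recurrent case the paper exploits this via $D_{c^*}=\sup_k\deg(f^k|_{P_k(c^*)})<\infty$ and a tableau-rule argument showing $\deg(f^{n_j}:P_{n_j+L_0}(z)\to P_{L_0}(c^*))\leq D_{c^*}$. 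In the reluctantly recurrent case $D_{c^*}$ is infinite, and the paper instead builds a three-tier nest $P_L(c^*)\Supset P_{L+p}(c^*)\Supset P_{L+p+q}(c^*)$, extracts a subsequence of return times $l_j$ and $t_j$ via iterated ``first conformal pullback'' times $k_j\leq s_j\leq t_j$ for the three depths, and reads off an explicit degree bound $2\cdot 4^{p+q}$ from the child property and the depth gaps. Your proposal has no substitute for this mechanism; replacing it with ``the number of re-entries is bounded'' is not a repair, it is the missing content.

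Two secondary points. First, your claim $D=1$ for the non-critical case is too strong: the pullback can still pass through $c^*$ once, and the paper bounds the degree by $2$, not $1$. Second, you never justify that your finite family $\mathcal F$ can be chosen with a uniform modulus gap $m>0$; the paper handles this by first proving ${\rm Imp}(z)=\{z\}$ (hence $\operatorname{diam}P_j(c^*)\to 0$), which lets one pick $q$ with $P_{L_0+p+q}(c^*)\Subset P_{L_0+p}(c^*)$, and only then defines $m$ as the resulting infimum of moduli---the order of quantifiers matters and your sketch glosses over it.
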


\begin{proof}   Recall that $P_{L_0+p}(c^*)\Subset P_{L_0}(c^*)$.  Let $q>0$ be an integer (to be determined later).
We first treat the points in $J(f)-\Gamma_{\infty}$ whose tableau is critical,
then deal with points in $J(f)\cap\Gamma_{\infty}$ or in $J(f)-\Gamma_{\infty}$ whose tableau is non critical.
The choices of $D$, $n_j$'s, and the Jordan disks can be seen in the proof. 
The numbers $m$ and $q$ will be determined in the final step.

 \textbf{(1). $z\in J(f)-\Gamma_{\infty}$ and $T_f(z)$ is critical.}
   
{\it Case 1. $T_f(c^*)$ is not recurrent}. In this case,
$$D_{c^*}:=\sup_{k} {\rm deg}(f^k|_{P_k(c^*)})<+\infty.$$
  Let $(L_0+p+q, n_j), j\geq 1$ be all the $c^*$-positions in the tableau  $T_f(z)$.
By the tableau rules, we have that for all $j\geq 1$,
$${\rm deg}(f^{n_j}: P_{n_j+L_0}(z)\rightarrow P_{L_0}(c))\leq D_{c^*}.$$ 
It suffices to take
\bess
&(U_j(z),U'_j(z), U''_j(z))=(P_{n_j+L_0}(z), P_{n_j+L_0+p}(z), P_{n_j+L_0+p+q}(z)),& \\
&(V_{z},V_{z}', V''_z)=(P_{L_0}(c^*), P_{L_0+p}(c^*), P_{L_0+p+q}(c^*)).&
\eess

{\it Case 2. $T_f(c^*)$ is reluctantly recurrent.}
The recurrence of $T_f(c^*)$ implies that there is an integer $L\geq L_0$ so that 
$P_{L+p}(c^*)\Subset P_{L}(c^*)$ and $P_{L}(c^*)$ has infinitely many children, say $P_{L+n_j}(c^*),j\geq1$.
Let $\mathcal J$ be the collection of indices $j\in \mathbb N$ so that  $P_{L+p}(c^*)$ has a child  $P_{L+l+p}(c^*)$ with $l\in[n_j,n_{j+1})\cap \mathbb N$. For each $j\in\mathcal J$, let $m_j\in[n_j,n_{j+1})\cap \mathbb N$ be the first integer so that $P_{L+m_j+p}(c^*)$ is a child of $P_{L+p}(c^*)$. 
Define $\mathcal J'\subset \mathcal J$ by
$$\mathcal J'=\{j\in \mathcal J; P_{L+p+q}(c^*) \text{ has a child } P_{L+l+p+q}(c^*) \text{ with } l\in [m_j,n_j)\cap \mathbb N \}.$$
The recurrence of $T_f(c^*)$ implies that  $\mathcal J'$ is an infinite set.

For each $j\in\mathcal J'$, let $l_j\in[m_j,n_{j+1})\cap \mathbb N$ be the first integer so that $P_{L+l_j+p+q}(c^*)$ is a child of $P_{L+p+q}(c^*)$. The choices of $m_j, l_j$ imply that
$${\rm deg}(f^{l_j}: P_{L+l_j}(c^*)\rightarrow P_{L}(c^*))\leq 2\cdot 2^{p+q}, \ \forall j\in\mathcal J'.$$

For $c^*$, we take $(V_{c^*},V_{c^*}', V''_{c^*})=(P_{L}(c^*), P_{L+p}(c^*), P_{L+p+q}(c^*))$ and
$$(U_j(c^*),U'_j(c^*), U''_j(c^*))=(P_{L+l_j}(c^*), P_{L+p+l_j}(c^*), P_{L+p+q+l_j}(c^*)).$$

Let $z\in J(f)-\Gamma_{\infty}$ with $z\neq c^*$ and $T_f(z)$ critical. For each $j\in \mathcal J'$, let $k_j\geq 0$ be the first integer so that 
$f^{k_j}: P_{L+l_j+k_j}(z)\rightarrow  P_{L+l_j}(c^*)$ is conformal. Fix $k_j$, let $s_j\geq k_j$ be the first integer so that 
$f^{s_j}: P_{L+p+l_j+s_j}(z)\rightarrow  P_{L+p+l_j}(c^*)$ is conformal. Fix $s_j$, let $t_j\geq s_j$ be the first integer so that 
$f^{t_j}: P_{L+p+q+l_j+t_j}(z)\rightarrow  P_{L+p+q+l_j}(c^*)$ is conformal.
Then the degree of $f^{t_j}: P_{L+l_j+t_j}(z)\rightarrow  P_{L+l_j}(c^*)$ is bounded by $2^{p+q}$.
It follows that the degree of $f^{l_j+t_j}: P_{L+l_j+t_j}(z)\rightarrow  P_{L}(c^*)$ is bounded by $2\cdot 2^{p+q}\cdot 2^{p+q}=2\cdot 4^{p+q}$.

 We may take
 \bess
&(U_j(z),U'_j(z), U''_j(z))=(P_{L+l_j+t_j}(z), P_{L+p+l_j+t_j}(z), P_{L+p+q+l_j+t_j}(z)),&\\ 
&(V_{z},V_{z}', V''_z)=(P_{L}(c^*), P_{L+p}(c^*), P_{L+p+q}(c^*)).&
\eess

\textbf{(2).} $z\in J(f)-\Gamma_{\infty}$ and $T_f(z)$ is non critical, or $z\in J(f)\cap\Gamma_{\infty}$.

Suppose $z\in J(f)-\Gamma_{\infty}$ and $T_f(z)$ is non critical.  
By the same argument as step 6 in Section \ref{p-r-case}, one can
show that ${\rm Imp}(z)=\{z\}$. Based on this fact, we can find integers 
$d''>d'>d$, $n_j$'s, three puzzle pieces $P_d, P_{d'}, P_{d''}$ of depths $d,d',d''$ respectively, with the following properties: 

\begin{itemize}{

\item ${\rm mod}(P_d-\overline{P_{d'}})\geq 1$ and ${\rm mod}(P_{d'}-\overline{P_{d''}})\geq 1$.

\item For $j\geq 1$ and $l\in\{d,d',d''\}$, the map $f^{n_j}: P_{l+n_j}(z)\rightarrow P_l$ is proper. 

\item $\{f^{n_j}(z);j\geq 1\}\subset P_{d''}$ and ${\rm deg}(f^{n_j}: P_{d+n_j}(z)\rightarrow P_d)\leq 2.$
 }
 \end{itemize}
 
 Set $V_z=P_{d}, V'_z=P_{d'}, V''_z=P_{d''}$ and
$$U_j(z)=P_{d+n_j}(z), U'_j(z)=P_{d'+n_j}(z), U''_j(z)=P_{d''+n_j}(z).$$

For $z\in J(f)\cap\Gamma_{\infty}$,  we replace $P_l$ by $P^*_l$ and  the argument is similar.

\textbf{(3). The choice of $q$ and $m$}.  In fact, for all $z\in J(f)$,  we have already chosen puzzle pieces $U_j(z)\Supset U'_j(z)$ so that ${\rm mod}(U_j(z)-\overline{U'_j}(z))$ has a lower bound independent of $j$. This implies that ${\rm Imp}(z)=\bigcap \overline{U_j}(z)=\{z\}$. In particular, we have ${\rm Imp}(c^*)=\{c^*\}$ implying that
${\rm diam}(P_j(c^*))\rightarrow 0$ as $j\rightarrow \infty$.
It suffices to take $q$
with $P_{L_0+p+q}(c^*)\Subset P_{L_0+p}(c^*)$. It follows that 
$$m:=\inf_{z\in J(f)}\min\{{\rm mod}(V_z-\overline{V'_{z}}), \ {\rm mod}(V'_z-\overline{V''_{z}}) \}>0.$$
The proof is completed.
\end{proof}

  {\it Proof of Theorems \ref{cubic-property} and \ref{c-rigidity} (other cases).}
  For any $z\in J(f)$,  the sets $U_j(z), U'_j(z)$'s given by Lemma \ref{red-assump}
    are puzzle pieces satisfying that 
    $\bigcap \overline{U_j}(z)=\{z\}$ and $\overline{U_j}\cap J(f)$'s  are  connected. The local connectivity of $J(f)$ at $z$, hence  
    at all points, follows immediately.

By the proof Theorem \ref{cubic-property} (1), one has ${\rm Imp}(z)=\{z\}$  for any $z\in J(f)$. 
Recall that $\phi_*$ is a bijection  between puzzle pieces.  Combining these facts, we 
get a natural extension 
 $\Phi:\mathbb C\rightarrow\mathbb C$ of $\phi: \bigcup\Gamma_k\rightarrow \bigcup\widetilde\Gamma_k$
as follows: on the Julia set,
we define $\Phi(z)$  as the intersection point of $\bigcap \phi_*(P_k(z))$; in the Fatou
components, we define $\Phi(z)$ inductively by  $\tilde f\circ \Phi=\Phi\circ f$.
One may verify that  $\Phi:\mathbb C\rightarrow \mathbb C$ is a homeomorphism, 
satisfying $\tilde f\circ \Phi=\Phi\circ f$ in $\mathbb C$.

Lemmas \ref{red-assump} and \ref{qc-c} imply that $J(f)$ has zero measure.
To show that $\Phi$ is quasi-conformal, by Lemma \ref{qc-c}, it suffices to verify the assumption 2(b). 
By Lemma \ref{regular}, it reduces to show that
$D_z, \mathbf{S}(V_z', f^{n_k}(z))$ are bounded by constants independent of $z,k$.
This follows from Lemma \ref{red-assump}.   \hfill \fbox

 \section{Boundary regularity}\label{br}

In this section, we show
 
 \begin{thm} \label{regularity}
  Every Type-A or B hyperbolic component is  a Jordan disk.
%
 \end{thm}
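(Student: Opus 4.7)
The plan is to show that the Milnor parametrization $\Phi : \mathcal{H} \to \mathbb{D}$ from Theorem \ref{parameterization}(1)-(2) extends to a continuous surjection $\overline{\Phi} : \overline{\mathcal{H}} \to \overline{\mathbb{D}}$ which is injective on $\partial \mathcal{H}$. Since $\Phi$ is a proper branched cover of degree $d_\omega \in \{2,3\}$ ramified over a single interior point, an injective continuous extension to the boundary forces $\partial\mathcal{H}$ to be a Jordan curve, hence $\mathcal{H}$ to be a Jordan disk. This follows the Douady--Hubbard--Roesch paradigm used in the $p=1$ case by Faught and Roesch, adapted to exploit the two tools developed in the preceding sections: the admissible puzzle of Theorem \ref{adm-puzzle} and the combinatorial rigidity of Theorem \ref{c-rigidity}.

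First, I would construct the continuous extension. For any $(c_0,a_0) \in \partial\mathcal{H}$ and any sequence $(c_n,a_n) \in \mathcal{H}$ converging to it, I want to show $\Phi(c_n,a_n) \to w_0 \in \partial\mathbb{D}$ uniquely. The boundary map $f_{c_0,a_0}$ is either (i) parabolic with a parabolic cycle of Fatou components inherited from $\mathcal{H}$, or (ii) hyperbolic/semihyperbolic with $-c_0$ landing on $\partial V_0$ for some $V_0 \in \mathcal{B}_{c_0,a_0}$. In case (ii), by the Roesch--Yin theorem \cite{RY}, $\partial V_0$ is a Jordan curve, so the B\"ottcher coordinate of $-c_0$ is a well-defined prime end on $\partial\mathbb{D}$; the convergence $\Phi(c_n,a_n) \to w_0$ follows from continuity of the dynamical internal rays (Lemma \ref{cont-ray} and its obvious variant for internal rays landing at repelling or preperiodic points). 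In case (i), parabolic implosion forces $\Phi(c_n,a_n) \to 1$ (in appropriate coordinates) since the B\"ottcher map of $f^p$ degenerates near the parabolic fixed point.

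Second, I would prove injectivity of $\overline{\Phi}$ on $\partial\mathcal{H}$ via rigidity. Suppose $(c_0,a_0), (c_0',a_0') \in \partial\mathcal{H}$ have $\overline{\Phi}(c_0,a_0) = \overline{\Phi}(c_0',a_0') = w_0$. The equality of B\"ottcher angles implies the two boundary maps have identical combinatorics of their critical orbits, so the graphs $\Gamma_{c_0,a_0}(\tfrac{1}{7}), \Gamma_{c_0,a_0}(\tfrac{3}{7})$ and their counterparts produce $p$-admissible puzzles with matching labelings. Since $-c_0$ and $-c_0'$ both land on $\partial \mathcal{A}$ (case (ii)), both maps lie in $\mathcal{C}_0(\mathcal{S}_p)$, and the tableau $T_f(c^*)$ is aperiodic because $-c$ is landing at a repelling (pre)periodic point of the Julia set rather than being attracted. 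Theorem \ref{c-rigidity} then yields $f_{c_0,a_0} = f_{c_0',a_0'}$. The parabolic case (i) is handled separately: standard parabolic analysis shows that each combinatorial class of parabolic boundary map is realized by a unique parameter, corresponding to the preimage of $w_0=1$ under $\overline{\Phi}$, and these are the only preimages of $1$.

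Third, once continuity and boundary injectivity are established, I would finish by a standard topological argument: a proper branched cover $\overline{\Phi} : \overline{\mathcal{H}} \to \overline{\mathbb{D}}$ of degree $d_\omega$, injective on the boundary, must send $\partial\mathcal{H}$ homeomorphically onto $\partial\mathbb{D}$; hence $\partial\mathcal{H}$ is a Jordan curve. The main obstacle I expect is the injectivity step for boundary maps whose critical orbit has subtle combinatorics, particularly when $-c_0$ pre-lands on a parabolic point rather than a repelling one, and at parameters where one of the graphs $\Gamma_{c,a}(\tfrac{1}{7})$ or $\Gamma_{c,a}(\tfrac{3}{7})$ is destroyed by the critical orbit. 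These cases require a case analysis paralleling the "Rest cases" in the proof of Theorem \ref{adm-puzzle}, together with a careful perturbative argument to ensure the rigidity machinery of Section \ref{rigidity-puzzle} applies to boundary parameters and not just interior ones.
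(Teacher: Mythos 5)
Your high-level strategy (Milnor parametrization plus rigidity) is the same as the paper's, but your framing contains a mathematical inconsistency that propagates through the whole argument. Since $\Phi:\mathcal{H}\to\mathbb{D}$ is a proper branched cover of degree $d_\omega\in\{2,3\}$ ramified at a single interior point, no continuous extension $\overline{\Phi}:\overline{\mathcal{H}}\to\overline{\mathbb{D}}$ can be injective on $\partial\mathcal{H}$: the boundary restriction is necessarily a degree-$d_\omega$ covering of the circle. Asking $\overline{\Phi}$ to be both degree $d_\omega>1$ and boundary-injective is contradictory. The paper sidesteps this by passing to $\Psi=\sqrt[d_\omega]{\Phi}$, a conformal isomorphism $\mathcal{H}\to\mathbb{D}$, and then proving (a) each parameter-ray impression $\mathcal{I}(t)$ of $\Psi$ is a singleton and (b) two distinct rays $\mathcal{R}(t_1),\mathcal{R}(t_2)$ cannot land together. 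Your proposal collapses these two steps into a single ``injectivity'' claim, and the collapse hides where the real work is.

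The step you do not address is precisely the hard one: ruling out the deck-transformation ambiguity. If $\mathcal{R}(t_1)$ and $\mathcal{R}(t_2)$ land at the same $(c_0,a_0)$, the internal-ray landing analysis and Roesch--Yin only force $d_\omega t_1=d_\omega t_2 \bmod \mathbb{Z}$, i.e.\ $t_1$ and $t_2$ differ by a $d_\omega$-th root of unity. Here rigidity (Theorem \ref{c-rigidity}) cannot distinguish the two rays, because the boundary parameter is literally the same map. The paper resolves this by choosing hyperbolic parameters $(c_1,a_1)\in\mathcal{R}(t_1)$, $(c_2,a_2)\in\mathcal{R}(t_2)$ with $\Phi(c_1,a_1)=\Phi(c_2,a_2)$, then using the Slodkowski-extended holomorphic motion of the puzzle graph to build a quasiconformal map $\psi$ conjugating the two on their postcritical sets, and finally running a pullback/Thurston-type argument (legal because the maps are hyperbolic, so $J$ has zero measure) to conclude $\psi=\mathrm{id}$ and $(c_1,a_1)=(c_2,a_2)$. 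Your proposal never engages with this. Two further soft spots: your claim that ``parabolic implosion forces $\Phi(c_n,a_n)\to 1$'' is asserted but not justified (the paper instead observes $\mathcal{I}(t)\setminus\mathcal{I}_0(t)$ is discrete and invokes connectivity of the impression); and your reason for aperiodicity of $T_f(c^*)$ is off --- the correct argument, in Proposition \ref{bm1-1}, is that periodicity of the tableau together with $-c\in\partial\mathcal{A}_{c,a}$ would force $-c$ to be a periodic point of $J(f)$, which a critical point cannot be. With those gaps filled --- in particular, replacing the inconsistent ``injective extension of $\Phi$'' framework by the $\Psi$-impression framework and supplying the holomorphic-motion lifting argument for the deck-transformation case --- your proposal would align with the paper's proof.
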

 
 Let $\mathcal{H}$ be a hyperbolic component of Type-$\omega\in\{A,B\}$.  Recall that $\ell(\mathcal H)$ is the integer $k\in[0,p)\cap\mathbb N$ so that $-c\in U_{c,a}(f_{c,a}^k(c))$.

 \subsection{Maps on the boundary of $\mathcal{H}$} We first show
%
%
 \begin{pro} \label{bm1} The boudary $\partial\mathcal{H}$ consists of parameters $(c,a)$ for which the map $f_{c,a}$ has
either a parabolic point or the critical point $-c$ on $\partial\mathcal{A}_{c,a}$.
%
  \end{pro}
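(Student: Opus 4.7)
The plan is to fix $(c_0,a_0)\in\partial\mathcal{H}$ and argue that, if $f_{c_0,a_0}$ has no parabolic cycle, then $-c_0$ must lie on $\partial\mathcal{A}_{c_0,a_0}$. The first step is an openness observation: every hyperbolic parameter is an interior point of its hyperbolic component, so if $f_{c_0,a_0}$ were hyperbolic it would lie in some open hyperbolic component $\mathcal{H}'$; since $\mathcal{H}'$ must meet $\mathcal{H}$ (parameters of $\mathcal{H}$ accumulate at $(c_0,a_0)$) and both are open, this forces $\mathcal{H}'=\mathcal{H}$, contradicting $(c_0,a_0)\notin\mathcal{H}$. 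Thus $f_{c_0,a_0}$ is non-hyperbolic. Because the cycle of $c_0$ is super-attracting on $\mathcal{S}_p$, non-hyperbolicity is equivalent to $-c_0$ not being attracted to any attracting cycle.

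The main mechanism is the Milnor parameterization $\Phi:\mathcal{H}\to\mathbb{D}$, $(c,a)\mapsto B_{c,a}(-c)$, given by Theorem~\ref{parameterization}(1)(2) as a branched covering of degree $d_\omega\in\{2,3\}$, and in particular proper. For any sequence $(c_n,a_n)\in\mathcal{H}$ with $(c_n,a_n)\to(c_0,a_0)$, properness of $\Phi$ yields $|\Phi(c_n,a_n)|\to1$. Translated back to the dynamical plane, this says that $-c_n$ approaches $\partial U_{c_n,a_n}(f_{c_n,a_n}^{\ell(\mathcal{H})}(c_n))$ in the B\"ottcher coordinate, and hence also in Euclidean distance, since by Roesch--Yin \cite{RY} each such Fatou component is a bounded Jordan domain whose B\"ottcher map extends to a homeomorphism of closures.

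Assume now that $f_{c_0,a_0}$ has no parabolic cycle. Combined with the persistence of the super-attracting cycle of $c_0$, holomorphic motion theory (Ma\~n\'e--Sad--Sullivan) gives that every repelling periodic point on $\partial U_{c_0,a_0}(f^{k}_{c_0,a_0}(c_0))$ moves holomorphically in a neighborhood of $(c_0,a_0)$ in $\mathcal{S}_p$; by density of repelling periodic points on these Jordan curves, the curves $\partial U_{c,a}(f^k_{c,a}(c))$, $0\le k<p$, vary continuously in Hausdorff topology. Passing to the limit in the Euclidean estimate of the previous paragraph together with $-c_n\to-c_0$ yields $-c_0\in\partial U_{c_0,a_0}(f_{c_0,a_0}^{\ell(\mathcal{H})}(c_0))\subset\partial\mathcal{A}_{c_0,a_0}$, as claimed.

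The main technical obstacle is the Hausdorff-continuity step: one must ensure that irrationally indifferent phenomena (Siegel or Cremer cycles) appearing at $(c_0,a_0)$ do not disrupt the continuous motion of $\partial\mathcal{A}_{c,a}$. The saving point is that the super-attracting cycle of $c_0$ persists on $\mathcal{S}_p$, its immediate basin is cut out by the Jordan curves of repelling periodic points, and the no-parabolic hypothesis keeps these boundary cycles repelling under perturbation, so the holomorphic-motion argument still applies and the Euclidean limit argument goes through.
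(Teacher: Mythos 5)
Your proof takes a genuinely different route from the paper. The paper assumes, for a contradiction, that $f^p|_{\partial U_{c,a}(c)}$ has neither a critical point nor a parabolic point, then Schwarz-reflects the Riemann map of $\widehat{\mathbb C}\setminus\overline{U_{c,a}(c)}$ across the circle, rules out irrationally indifferent boundary cycles because the reflected map is a circle map with real multipliers, invokes Ma\~n\'e's theorem to get hyperbolicity on $\partial\mathbb D$, and thereby produces a polynomial-like restriction $f_{c,a}^{\ell p}:Z_{c,a}\rightarrow Y_{c,a}$ of degree $2^{\ell}$ that persists in a parameter neighbourhood; the degree jumps to $(d_\omega+1)^{\ell}$ once the parameter enters $\mathcal H$, giving a contradiction. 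Your argument instead uses properness of $\Phi$ to push $-c$ to the boundary of the marked Fatou component, and a Hausdorff-continuity claim to pass to the limit. This is a plausible strategy, but it has real gaps that the paper's degree-counting argument deliberately avoids.

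The first gap is the appeal to a B\"ottcher extension. You write that, by Roesch--Yin, ``the B\"ottcher map extends to a homeomorphism of closures.'' For a Type-A or Type-B parameter, the free critical point $-c$ lies inside the marked cycle of Fatou components, so the B\"ottcher map of $f_{c,a}^p$ is defined only on the subdomain $B_{c,a}^{-1}(\mathbb D_{|\Phi(c,a)|})$, not on all of $U_{c,a}(f^{\ell(\mathcal H)}_{c,a}(c))$; Roesch--Yin gives a Jordan boundary, but the Riemann map of the component and the B\"ottcher map are different objects here. What you actually get from $|\Phi(c_n,a_n)|\to 1$ is $G^V_{c_n,a_n}(-c_n)\to 0$, and converting that into a Euclidean distance estimate requires a uniform Harnack-type control of the Green's function near the boundary of a \emph{varying} domain. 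That uniformity is precisely what needs proving.

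The second and more serious gap is the Hausdorff-continuity step. The parameter $(c_0,a_0)\in\partial\mathcal H$ is a bifurcation point, so MSS structural stability does not hold in any neighbourhood; you cannot invoke holomorphic motion of the Julia set or of $\partial\mathcal A_{c,a}$ wholesale. Your fallback is to move repelling periodic points individually and use density, but this leaves three things unaddressed: (i) the no-parabolic hypothesis does not by itself rule out a Cremer or Siegel cycle on $\partial\mathcal A_{c_0,a_0}$ --- the paper rules these out by the observation that the boundary multipliers are real after reflection, an argument you do not have access to in your setup; (ii) each repelling point moves holomorphically on its own neighbourhood, but you need a \emph{uniform} neighbourhood to invoke the $\lambda$-lemma over a dense set; (iii) even granting a holomorphic motion of the dense set of repelling points, this only gives one-sided Hausdorff control ($\partial U_{c_0,a_0}\subset\liminf\partial U_{c_n,a_n}$); the other inclusion, needed to conclude $-c_0\in\partial U_{c_0,a_0}$, requires an argument that no ``extra'' limit points appear (for instance, that the moved set is itself a Jordan curve inside the Jordan curve $\partial U_{c,a}$, hence equal to it). None of these steps is automatic, and together they amount to re-proving a substantial piece of the theory. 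The paper's route is cleaner precisely because it replaces all this continuity analysis with a single topological invariant, the degree of a polynomial-like restriction, which is rigid under perturbation.
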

  
  \begin{proof} Let $(c,a)\in \partial\mathcal{H}$,  assume  the restriction  $f_{c,a}^p: \partial U_{c,a}(c)\rightarrow \partial U_{c,a}(c)$ has neither critical point nor parabolic point. 
Write $g=f_{c,a}^p$ and $V={U_{c,a}(c)}$. 

By assumption, one has $\overline{V} \cap \overline{(g^{-1}(V)-V)}=\emptyset$.
Let $\phi:\mathbb{\widehat{C}}\setminus \overline{V}\rightarrow \mathbb{\widehat{C}}\setminus{\overline{\mathbb{D}}}$ be the Riemann mapping fixing $\infty$, then $G=\phi\circ g\circ \phi^{-1}$ is  defined in $\phi(\mathbb{\widehat{C}}\setminus {g^{-1}(\overline{V})})$ and  $G(\partial \mathbb{D})=\partial \mathbb{D}$. By Schwartz reflection, this $G$ can be defined in an annular neighborhood $U$ of $\partial \mathbb{D}$. 

By assumption, $G$ has no critical point on $\partial \mathbb{D}$ and ${\rm deg}(G|_{\partial \mathbb{D}})=2$. If $G$ has a non-repelling periodic point, say $q$ with period $k$, on $\partial\mathbb{D}$. The multiplier $\lambda=(G^k)'(q)$ is real because  $G(\partial \mathbb{D})=\partial \mathbb{D}$.
It turns out that $q$ is either attracting or parabolic.  Let $\mathbf{B}$ be its immediate basin, and $\mathbf{A}=\phi^{-1}(\mathbf{B})=\phi^{-1}(\mathbf{B}\cap (\mathbb{\widehat{C}}\setminus{\overline{\mathbb{D}}}))$. Then  $\mathbf{A}$ is bounded in 
$\mathbb{C}$ and is stable by $g^k$.  

Note that every point of $\mathbf{B}$ is attracted to $q$ under iterations of $G^k$, meaning that every point of $\mathbf{A}$ is attracted to a boundary point of $\partial V$ by $g^k$, this means that $g$ has a parabolic point on $\partial V$. Contradiction. 

Hence the analytic map $G$ has neither critical point nor non-repelling point on $\partial\mathbb{D}$. By Ma\~n\'e's theorem \cite{Ma}, 
$\partial \mathbb{D}$ is a hyperbolic set of $G$: there are constants $C>0$ and $\nu>1$ such that for all $n\geq 1$
$$|(G^{n})'(z)|\geq C\nu^n.$$

Then one can find an integer $\ell\geq 1$ and   two annular neighborhoods $X, Y$ of $\partial\mathbb{D}$  with $X\Subset Y\subset U$, such that $G^{\ell}: X\rightarrow Y$ is a proper map of degree $2^{\ell}$.
By pulling back $X\setminus{\overline{\mathbb{D}}}, \ Y\setminus{\overline{\mathbb{D}}}$ via $\phi$,  we  get a polynomial-like map $f_{c,a}^{\ell p}: Z_{c,a}\rightarrow Y_{c,a}$,
  where $$Z_{c,a}=\phi^{-1}(X\setminus{\overline{\mathbb{D}}})\cup  \overline{V}, \ Y_{c,a}=\phi^{-1}(Y\setminus{\overline{\mathbb{D}}})\cup  \overline{V}.$$

It's clear that there is a neighborhood  $\mathcal{U}$ of $(c,a)$, such
that for all $(\tilde{c},\tilde{a})\in \mathcal{U}$, the map  $f_{\tilde{c},\tilde{a}}^{\ell p}$ has exactly one  critical value  in $\overline{Y_{c,a}}$.
 This critical value is nothing but $\tilde{c}$.
Thus the
component $Z_{\tilde{c},\tilde{a}}$ of  $f_{\tilde{c},\tilde{a}}^{-\ell p}(Y_{c,a})$ that contains $\tilde{c}$ is
a disk. Since $Z_{\tilde{c},\tilde{a}}$ moves holomorphically with respect to
$(\tilde{c},\tilde{a})\in \mathcal{U}$, we may shrink $\mathcal{U}$ if necessary
 so that $Z_{\tilde{c},\tilde{a}}\Subset Y_{{c},{a}}$ for all  $(\tilde{c},\tilde{a})\in \mathcal{U}$.   In this way, we get a polynomial-like map $f_{\tilde{c},\tilde{a}}^{\ell p}: Z_{\tilde{c},\tilde{a}}\rightarrow
 Y_{c,a}$  of degree $2^{\ell}$ for all $(\tilde{c},\tilde{a})\in \mathcal{U}$.

 However when $(\tilde{c},\tilde{a})\in \mathcal{U}\cap\mathcal{H}$,
 its clear that $\pm \tilde{c}\in \mathcal{A}_{\tilde c,\tilde a}$, 
  and the degree of
 $f_{\tilde{c},\tilde{a}}^{\ell p}: Z_{\tilde{c},\tilde{a}}\rightarrow
 Y_{c,a}$ is  $(d_\omega+1)^{\ell}>2^{\ell}$. It's a contradiction.
 \end{proof}

 \begin{pro} \label{bm1-1} Let $(c,a)\in \partial\mathcal{H}, f=f_{c,a}$ and let $\Gamma$ be the $p$-admissible puzzle of $f$ (by Theorem \ref{adm-puzzle}). 
 Then $-c\in\partial\mathcal{A}_{c,a}$ iff  $T_{f}(-c)$ is aperiodic.
 \end{pro}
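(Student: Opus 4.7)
The plan is to prove the two implications separately, using Proposition~\ref{bm1} to control the possible boundary behavior and Proposition~\ref{common-point} together with the B\"ottcher coordinate on the relevant Fatou component to drive the key contradiction.

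For the ``$\Leftarrow$'' direction I would argue as follows. Since the puzzle $\Gamma$ is admissible (Theorem~\ref{adm-puzzle}), we have $(c,a)\in\mathcal C_0(\mathcal S_p)$, so the forward orbit of $-c$ avoids $\mathcal A_{c,a}$. By Proposition~\ref{bm1} the alternatives on $\partial\mathcal H$ are a parabolic periodic point of $f$ or $-c\in\partial\mathcal A_{c,a}$. In the parabolic alternative, the attracting cycle of $\mathcal H$ degenerates on $\partial\mathcal H$ into a parabolic cycle of period $p$ whose parabolic basin (by continuity of the Type-$A,B$ parameterization $\Phi$ in Theorem~\ref{parameterization}) still contains $-c$ inside one of the Fatou components of $\mathcal B_{c,a}$. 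This contradicts $(c,a)\in\mathcal C_0(\mathcal S_p)$, so only $-c\in\partial\mathcal A_{c,a}$ survives, giving the implication.

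For the ``$\Rightarrow$'' direction I assume $-c\in\partial\mathcal A_{c,a}$ and suppose $T_f(-c)$ has period $k\geq 1$, so $P_n(-c)=P_n(f^k(-c))$ for every $n$. First I isolate the unique $V_0\in\mathcal B_{c,a}$ with $-c\in\partial V_0$: if $-c$ belonged to $\partial V_0\cap\partial V_0'$ for $V_0'\neq V_0$ in $\mathcal B_{c,a}$, Proposition~\ref{common-point} would force $f^p(-c)=-c$; together with $f'(-c)=0$ this would make $-c$ super-attracting periodic and thus Fatou, contradicting $-c\in J(f)$. Next I show $p\mid k$: for $n$ large the puzzle piece $P_n(-c)$ meets only $V_0$ among elements of $\mathcal B_{c,a}$, so $f^k(-c)\in P_n(-c)$ forces $f^k(V_0)=V_0$, hence $k=pm$. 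Writing $g=f^p|_{V_0}$ and using that only one of $c,f(c),\dots,f^{p-1}(c)$ lies in $V_0$ (since $-c$ has moved to the boundary), $g$ is conjugate via the B\"ottcher map $B\colon V_0\to\mathbb D$ to $z\mapsto z^{d_0}$ with $d_0=2$. By Roesch--Yin, $\partial V_0$ is a Jordan curve, so $B$ extends to a homeomorphism $\overline{V_0}\to\overline{\mathbb D}$. In this coordinate $f^k|_{V_0}$ is $\zeta\mapsto\zeta^{d_0^m}$, and the puzzle pieces $P_n(-c)\cap V_0$ pull back from $\mathbb D$ to open sectors with apex $0$ whose angular widths around $\zeta_0:=B(-c)$ shrink to zero as $n\to\infty$. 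The periodic-tableau condition forces the sector around $\zeta_0^{d_0^m}=B(f^k(-c))$ to coincide with that around $\zeta_0$ at every depth, so $\zeta_0^{d_0^m}=\zeta_0$. Injectivity of $B$ on $\partial V_0$ then yields $f^k(-c)=-c$, and since $(f^k)'(-c)=0$ this puts $-c$ in the Fatou set, contradicting $-c\in\partial V_0\subset J(f)$.

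The main obstacle is the sector identification in the B\"ottcher coordinate on $V_0$. One must verify that near $-c$ the puzzle pieces $P_n(-c)\cap V_0$ really are bounded by internal rays of $V_0$ landing on repelling periodic points whose $B$-angles converge to $\theta_0=\arg\zeta_0$ from both sides as the depth $n$ increases, so that the corresponding sectors in $\mathbb D$ pinch down to the radial line at $\zeta_0$. This in turn requires tracking precisely which internal rays (those in the defining graph $\gamma_{c,a}(1/7)\cup\gamma_{c,a}(3/7)$ and their $f^{-n}$-preimages) actually enter $V_0$ near $-c$; I expect this to follow from the density of the preimage angles and a Koebe-type control on the $f$-preimages of $\Gamma\cap V_0$, but it is the only step that is not formal from the setup.
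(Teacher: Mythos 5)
Your argument for the forward implication ($-c\in\partial\mathcal{A}_{c,a}\Rightarrow T_f(-c)$ aperiodic) is essentially the paper's. Moreover, the ``main obstacle'' you flag is not really one once periodicity is in force: with $k=sp$, the B\"ottcher angular widths of $P_{d+k}(-c)\cap V_0$ are exactly $2^{-s}$ times those of $P_d(-c)\cap V_0$ (because $f^p|_{V_0}$ is $z\mapsto z^2$ in B\"ottcher coordinates), so the two bounding internal angles $\alpha_d\leq\beta_d$ converge geometrically to a common limit $t$ with $2^s t=t\bmod \mathbb Z$. Combined with the Roesch--Yin theorem that $\partial V_0$ is a Jordan curve, this shows that the periodic internal ray $R^{V_0}_{c,a}(t)$ lands at $-c$, forcing $-c$ to be a periodic Julia point, contradicting that $-c$ is critical. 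So there is no analytical gap there; the paper disposes of it in two lines.

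The reverse implication, however, contains a genuine error. You argue that the parabolic alternative of Proposition \ref{bm1} cannot occur because ``the parabolic basin \dots\ still contains $-c$ inside one of the Fatou components of $\mathcal B_{c,a}$, contradicting $(c,a)\in\mathcal C_0(\mathcal S_p)$.'' This is false: $\mathcal B_{c,a}$ is the cycle of superattracting Fatou components through $c$, while the parabolic basin of the degenerated cycle is a different (disjoint) collection of Fatou components attached to $\partial\mathcal A_{c,a}$ from the outside; a superattracting basin cannot turn into a parabolic one. Having $-c$ in such a parabolic basin is perfectly compatible with ${\rm orb}(-c)\cap\mathcal A_{c,a}=\emptyset$, i.e.\ with $(c,a)\in\mathcal C_0(\mathcal S_p)$. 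Indeed, parabolic maps certainly occur on $\partial\mathcal H$ (they are exactly the landing points of parameter rays $\mathcal R(t)$ with $d_\omega t$ periodic under doubling; see the Remark following Theorem \ref{regularity}), so your conclusion that the parabolic case ``does not survive'' cannot be right. What must be shown in that case is that $T_f(-c)$ \emph{is} periodic. The paper does this by a Schwarz-lemma count: let $\zeta\in\partial\mathcal A_{c,a}$ be the parabolic point, whose tableau is periodic of some period $n$; if $-c$ avoided $P_{d+n}(\zeta)\cup P_{d+n-1}(f(\zeta))\cup\cdots\cup P_{d+1}(f^{n-1}(\zeta))$, then $f^n: P_{d+n}(\zeta)\to P_d(\zeta)$ would be conformal and $\zeta$ would be repelling, a contradiction. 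Hence $-c$ lies in one of those pieces for every $d$; since finitely many indices $j$ are available, one index occurs for all large $d$, which forces $T_f(-c)$ to be periodic. This pigeonhole/Schwarz step is the content your proof is missing.
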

 \begin{proof}
 If $-c\in \partial W$  for some $W\in \mathcal B_{c,a}$, then this $W$ is unique (by Proposition \ref{common-point}). For all $d\geq 1$, let $P_{d}(-c)$ be the puzzle piece of depth $d$ containing 
 $-c$. By the puzzle structure, $\partial P_{d}(-c) \cap W$  contains two sections of internal rays $R_{c,a}^W(\alpha_d)$ and
  $R_{c,a}^W(\beta_d)$ with $0\leq\alpha_d<\beta_d\leq 1$.  
   Clearly  when $d$ is large,  
  $$\alpha_d\leq\alpha_{d+1}\leq\cdots\leq\beta_{d+1}\leq\beta_{d}.$$
  If $T_{f}(-c)$ is periodic, then its period $l=sp$ for some $s\geq 1$. We have
  $$\beta_{d+s}-\alpha_{d+s}=2^{-s}(\beta_{d}-\alpha_{d})$$
 for large $d$. So $\alpha_k$'s and $\beta_k$'s have a common limit $t$, which satisfies 
 $2^s t=t \ {\rm  mod } \ \mathbb Z$. 
 The fact $\bigcap (P_k(-c) \cap \partial W)=\{-c\}$ implies that $R_{c,a}^W(t)$ lands at $-c$,
  meaning that $-c$ is a periodic point on $J(f)$. This is a contradiction.
 
 If $-c\notin \partial\mathcal{A}_{c,a}$, then by Proposition \ref{bm1}, $f$ has a parabolic point $\zeta\in\partial\mathcal{A}_{c,a}$. Clearly $T_f(\zeta)$ is periodic, with period say $n\geq 1$.
 We claim that  
 $$-c\in P_{d+n}(\zeta)\cup P_{d+n-1}(f(\zeta))\cup\cdots \cup P_{d+1}(f^{n-1}(\zeta)).$$
 In fact, if it is not true, then $f^n:  P_{d+n}(\zeta)\rightarrow P_{d}(\zeta)$ is conformal, implying that
 $\zeta$ is repelling (by Schwarz Lemma). This contradicts that $\zeta$ is parabolic.
 Hence we get the claim, which implies that $T_f(-c)$ is periodic.
 \end{proof}

   \subsection{Parameter rays}
  By Theorem \ref{parameterization}, the map 
   $$\Phi: \mathcal{H}\rightarrow \mathbb{D}, \  ({c},a)\mapsto B_{c,a}(-c)$$
   is a $d_\omega$-fold cover ramified at a single point, where $B_{c,a}$ is the B\"ottcher map of $f^p_{{c},a}$ defined near $f_{c,a}^{\ell(\mathcal{H})}(c)$.
  Taking $\Psi=\sqrt[d_\omega]{\Phi}$ yields a conformal map  $\Psi:\mathcal{H}\rightarrow \mathbb{D}$.
  There are $d_\omega$ choices of $\Psi$, we may fix one of them.

 The {\it parameter ray} $\mathcal{R}(t)$ of angle $t\in [0, 1)$ in $\mathcal{H}$ is defined by
 $$\mathcal{R}(t):=\Psi^{-1}((0,1)e^{2\pi i t}).$$
  The  {\it impression} of $\mathcal{R}(t)$  is
  $\mathcal{I}(t):=\bigcap_{k\geq 1} \overline{\mathcal{S}_k}(t)$, 
 where
$$\mathcal{S}_k(t)=\Psi^{-1}(\{re^{2\pi i \theta}; r\in (1-1/k,1), \ \theta\in(t-1/k,t+1/k)\}).$$

The following proposition is a sharper version of Proposition \ref{bm1}.

 \begin{pro}\label{dyn-para} Let $t\in [0,1)$ and $(c,a)\in \mathcal{I}(t)$. Write $V=U_{c,a}(f_{c,a}^{\ell(\mathcal{H})}(c))$.
 
 1. If $f_{c,a}$ has a parabolic point, then $R^{V}_{c,a}(d_{\omega}t)$  lands at a parabolic point.
 
 2. If  $f_{c,a}$ has no parabolic point, then $R^{V}_{c,a}(d_{\omega}t)$ lands at $-c$.
 \end{pro}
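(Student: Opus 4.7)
Pick a sequence $(c_n,a_n)\in\mathcal R(t)$ converging to $(c,a)$ and write $V_n=U_{c_n,a_n}(f_{c_n,a_n}^{\ell(\mathcal H)}(c_n))$, $B_n=B_{c_n,a_n}$. Since $\Psi=\sqrt[d_\omega]{\Phi}$,
$$B_n(-c_n)=\Phi(c_n,a_n)=\Psi(c_n,a_n)^{d_\omega}=r_n\,e^{2\pi i d_\omega t},\qquad r_n\in(0,1),\ r_n\to 1,$$
so in the dynamical plane of $f_{c_n,a_n}$ the critical point $-c_n$ sits on the internal ray of $V_n$ at angle $d_\omega t$ and at B\"ottcher radius approaching $1$. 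Two standard ingredients let us take the limit: Carath\'eodory kernel convergence $V_n\to V$ (based at $f_{c,a}^{\ell(\mathcal H)}(c)$), which yields uniform convergence $B_n\to B_{c,a}$ on compacta of $V$; and the Roesch--Yin theorem that $\partial V$ is a Jordan curve, so $B_{c,a}$ extends to a homeomorphism $\overline V\to\overline{\mathbb D}$ and the ray $R^V_{c,a}(d_\omega t)$ lands at a unique point $\zeta\in\partial V$. The task is then to identify $\zeta$.

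\textbf{Case 2 (no parabolic).}  Here no pinching of $\partial V_n$ occurs in the Carath\'eodory limit, and $B_n^{-1}\to B_{c,a}^{-1}$ uniformly on $\overline{\mathbb D}$. Passing to the limit in $-c_n=B_n^{-1}(r_n e^{2\pi i d_\omega t})$ yields $-c=B_{c,a}^{-1}(e^{2\pi i d_\omega t})=\zeta$; Proposition~\ref{bm1} independently places $-c\in\partial\mathcal A_{c,a}$, hence $-c\in\partial V$, so the ray lands at $-c$.

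\textbf{Case 1 (parabolic).}  Now $-c$ is typically attracted into a Leau petal and escapes $\overline V$, so the uniform-convergence argument breaks down and one must argue indirectly. The ray $R^V_{c,a}(d_\omega t)$ still lands (since $\partial V$ is Jordan) at some $\zeta\in\partial V$. If $\zeta$ were a repelling periodic point, then applying Lemma~\ref{cont-ray} at $(c,a)\in\mathcal S_p^*$ (the parabolic cycle is disjoint from $\mathrm{orb}(c)$), using that the limit ray does not bifurcate (no critical point of $f_{c,a}^p$ sits in $V$ in this subcase), would force $R^{V_n}_{c_n,a_n}(d_\omega t)$ to be a non-bifurcating ray for all large $n$. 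But $-c_n$ is a critical point of $f_{c_n,a_n}^p|_{V_n}$ sitting precisely on this trajectory at radius $r_n<1$, forcing the ray to bifurcate --- a contradiction. Hence $\zeta$ is non-repelling, and as a periodic point on the Julia set of a map in $\mathcal S_p$ it must be parabolic.

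\textbf{Main obstacle.}  The substantive step is controlling $-c_n$ in the limit: in Case 2 the B\"ottcher coordinates converge uniformly up to the unit circle and pin down $\zeta=-c$ directly, whereas in Case 1 parabolic implosion can send $-c_n$ into a nascent Leau petal, breaking this direct identification and forcing $\zeta$ to be extracted via the rigidity of non-bifurcating rays in Lemma~\ref{cont-ray}. That rigidity, played off against the inevitable bifurcation along $\mathcal R(t)$, is the mechanism that ties $\zeta$ to the parabolic cycle, and it is precisely where the parabolic hypothesis is genuinely needed.
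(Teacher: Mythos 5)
The paper proves both parts simultaneously by a contradiction argument built on an auxiliary puzzle graph, not on the ray $R^V_{c,a}(d_\omega t)$ itself: one picks $\gamma_{c,a}(\theta)$ with $\gamma_{c,a}\cap J(f_{c,a})$ repelling and $\gamma_{c,a}\cap\mathrm{orb}(-c)=\emptyset$, applies Lemma~\ref{cont-ray} to \emph{that} graph to obtain continuity in a neighborhood $\mathcal V$, and derives a contradiction from the fact that a preimage $f^{-l}(\gamma_{c,a})$ separating the ray from $-c$ must persist over $\mathcal V$, while for $(c',a')\in\mathcal R(t)\cap\mathcal V$ one has $-c'\in R^{V'}_{c',a'}(d_\omega t)$. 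Your proposal takes a genuinely different route, but both of your cases have real gaps.

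In your Case~2 the load-bearing step is the assertion that $B_n^{-1}\to B_{c,a}^{-1}$ \emph{uniformly on $\overline{\mathbb D}$}. Carath\'eodory kernel convergence of $(V_n,\text{center})$ only gives locally uniform convergence on $\mathbb D$; promoting that to convergence up to the boundary requires a uniform modulus of continuity for the boundary extensions of the $B_n^{-1}$ (equivalently, uniform local connectivity of $\partial V_n$), which is not automatic and is essentially what the puzzle machinery in Sections~\ref{BHY-puzzle}--\ref{rigidity-puzzle} is built to deliver. The phrase ``no pinching occurs'' does not supply this. Also, Proposition~\ref{bm1} only gives $-c\in\partial\mathcal A_{c,a}$; it does not say $-c\in\partial V$ for the particular $V=U_{c,a}(f^{\ell(\mathcal H)}_{c,a}(c))$ — that $-c$ lies on $\partial V$ (rather than on the boundary of another basin of the cycle) is precisely part of what Proposition~\ref{dyn-para}.2 is asserting, so it cannot be quoted as a prior fact.

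In your Case~1 the gap is that the landing point $\zeta$ of $R^V_{c,a}(d_\omega t)$ need not be periodic: for $d_\omega t$ not eventually periodic under doubling, $\zeta$ is a generic Julia point. Lemma~\ref{cont-ray} only concerns rays landing at \emph{repelling periodic} points, so your dichotomy ``repelling periodic $\Rightarrow$ contradiction'' versus ``non-repelling $\Rightarrow$ parabolic'' silently assumes $\zeta$ is periodic and misses the entire non-periodic case. Ruling that case out (equivalently, showing $d_\omega t$ must be periodic whenever $f_{c,a}$ is parabolic) is part of the content of the proposition, and it is exactly why the paper applies Lemma~\ref{cont-ray} to a fixed graph whose Julia points are repelling periodic by construction, rather than to the ray of angle $d_\omega t$.
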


 \begin{proof}
 Let $\gamma_{c,a}=\gamma_{c,a}(\theta)$ be a graph so that $\gamma_{c,a}\cap {\rm orb}(-c)=\emptyset$ and $\gamma_{c,a}\cap J(f_{c,a})$ consists of repelling points. If $R^{V}_{c,a}(d_{\omega}t)$ lands at neither $-c$ (non parabolic case) nor a parabolic point $p_{c,a}$ (parabolic case), then by Lemma \ref{cont-ray}, there is a neighborhood $\mathcal{V}$ of $(c,a)$ and an integer $l>0$, satisfying that
 
(a).  the graph $f_{c,a}^{-l}(\gamma_{c,a})$ separates  $R_{c,a}^{V}(d_\omega t)$ from the critical point $-c$;

(b).   $\gamma_{c',a'}=\gamma_{c',a'}(\theta)$ is a graph moving continuously for   $(c',a')\in\mathcal{V}$;
 
(c).  $-c'\notin f_{c',a'}^{-l}(\gamma_{c',a'})$ for all $(c',a')\in\mathcal{V}$.

By (b) and (c), 
for all $(c',a')\in \mathcal{R}(t)\cap \mathcal{V}$,  
the graph $f_{c',a'}^{-l}(\gamma_{c',a'})$ separates  $R_{c',a'}^{V'}(d_\omega t)$ from $-c'$, where $V'=U_{c',a'}(f_{c',a'}^{\ell(\mathcal{H})}(c'))$.
But this would contradict the fact that when  $(c',a')\in \mathcal{R}(t)\cap \mathcal{V}$, one has $-c'\in R_{c',a'}^{V'}(d_\omega t)$.
\end{proof}

\begin{rmk} \label{point-char} Let $(c,a)\in \mathcal{I}(t)$ and $f=f_{c,a}$.
 Proposition \ref{dyn-para} asserts that if $f$ has a parabolic cycle,  then the landing point $z$ of  $R^{V}_{c,a}(d_{\omega}t)$ is parabolic.
 By  Proposition \ref{bm1-1},  $T_{f}(-c)$ is periodic, with period say $m$.
 Then $f^m: P_{d+m}(-c)\rightarrow P_{d}(-c)$ defines a renormalization of $f$. Its filled Julia set 
 $K=\bigcap P_{k}(-c)={\rm Imp}(-c)$ satisfies that $\partial K\cap \partial V=\{z\}$.
\end{rmk}

 \subsection{Proof of Theorem \ref{regularity}}  There are three main ingredients in the proof:
 
 $\bullet$  Characterization of the maps on $\partial \mathcal{H}$ (Proposition \ref{dyn-para}).
 
 $\bullet$  Combinatorial rigidity (Theorem \ref{c-rigidity}).
 
 $\bullet$  Holomorphic motion theory \cite{Sl}.

 \begin{proof}   Define $\mathcal{I}_0(t)\subset \mathcal{I}(t)$ by
  $$\mathcal{I}_0(t)=\{(c,a)\in\mathcal{I}(t); f_{c,a} \text{ has no parabolic cycle} \}.$$
 Fix  $(c_0,a_0)\in \mathcal{I}_0(t)$, let $\Gamma_{c_0,a_0}$ be the  $p$-admissible puzzle of $f_{c_0,a_0}$ given by 
 Theorem \ref{adm-puzzle}.  By Lemma \ref{cont-ray},  there is a neighborhood $\mathcal{U}$ of $(c_0,a_0)$, so that
 $\Gamma_{c_0,a_0}$ admits a holomorphic motion $\Gamma_{c,a}$ for  $(c,a)\in \mathcal{U}$.
 
 Precisely, there is a continuous map $h:\mathcal{U} \times ((\mathbb{C}-X_{c_0,a_0})\cup \Gamma_{c_0,a_0})\rightarrow \mathbb{C}$
 defined in the way that for any $((c,a),z)\in \mathcal{U} \times ((\mathbb{C}-X_{c_0,a_0})\cup \Gamma_{c_0,a_0})$, the point 
 $h((c,a),z)$ is in the dynamical plane of $f_{c,a}$, with the same equipotential and internal (or external) angle as that of 
 $z$ in the dynamical plane of $f_{c_0,a_0}$. In other words, $z$ and $h((c,a),z)$ have the same `dynamical position' in their
 corresponding attracting basins.
 One may verify that $h$ is a holomorphic motion parameterized by $\mathcal U$, with base point $(c_0,a_0)$ (namely $h((c_0,a_0),\cdot)=id$). 
  By  \cite{Sl}, $h$ can be extended to a holomorphic motion  $H: \mathcal{U}\times\mathbb C\rightarrow \mathbb C$. 
 In particular,  for any $(c,a)\in \mathcal U$, the map 
 $H((c,a),\cdot): \mathbb C\rightarrow \mathbb C$ is  quasiconformal.

 For any $(c,a)\in \mathcal{I}_0(t)\cap \mathcal U$, let $\phi=H((c,a),\cdot)|_{\Gamma_{c_0,a_0}}$.
 The above $\Gamma_{c,a}$ is nothing but $\phi(\Gamma_{c_0,a_0})$.
By Proposition \ref{dyn-para}, the critical point $-c$ for $f_{c,a}$ has the same 
 `dynamical position' as that of $-c_0$ for $f_{c_0,a_0}$.
  This implies that for any 
 $k\geq 1$, there is a homeomorphism $\phi_k:\Gamma^k_{c_0,a_0}
 \rightarrow  \Gamma^k_{c,a}$, which matches $\phi$ on $\Gamma^k_{c_0,a_0}\cap \Gamma_{c_0,a_0}$.
 Equivalently $f_{c_0,a_0}$ and $f_{c,a}$ have  the same combinatorics up to depth $k$. Since $k$ is arbitrary, the maps  $f_{c_0,a_0}, f_{c,a}$ are qc combinatorially equivalent.
 By Theorem \ref{c-rigidity}, we have $(c,a)=(c_0,a_0)$.
This means that $\mathcal{I}_0(t)$ is  at most a singleton. The discreteness of $\mathcal{I}(t)-\mathcal{I}_0(t)$ and the
connectivity of $\mathcal{I}(t)$ imply that $\mathcal{I}(t)$ is a singleton.
Since $t$ is arbitrary, we have that $\partial\mathcal{H}$ is locally connected.

To finish, we show that $\partial\mathcal{H}$ is a Jordan curve. If this is not true, 
 then $\mathcal{I}(t_1)=\mathcal{I}(t_2)=\{(c_0,a_0)\}$ for some $0\leq t_1< t_2<1$ (here $(c_0,a_0)$ is the same
 symbol as we used above, without assuming that $f_{c_0,a_0}$ has no parabolic cycle). Then by Lemma \ref{dyn-para} and
 Remark \ref{point-char}, 
 the internal rays $R^{V}_{c_0,a_0}(d_{\omega}t_1)$ and  $R^{V}_{c_0,a_0}(d_{\omega}t_2)$ land at the same point,
 which is exactly the unique intersection point of ${\rm Imp}(-c)\cap \partial V$,
 here $V=U_{c_0,a_0}(f_{c_0,a_0}^{\ell(\mathcal{H})}(c_0))$.
 Since $\partial V$ is a Jordan curve \cite{RY}, this implies that 
 $$d_{\omega}t_1=d_{\omega}t_2 \ {\rm mod }\ \mathbb{ Z }.$$
 
Let $\mathcal U, H$ be  defined as above. We may shrink  $\mathcal U$ if necessary so that for all
$(c,a)\in \mathcal U$, one has $f_{c,a}(-c)\notin \Gamma_{c,a}$.
It follows that $f_{c,a}^{-1}(\Gamma_{c,a})$ moves continuously with respect to $(c,a)\in \mathcal U$, and avoids $-c$ along the motion.
 Choose 
 $(c_1,a_1)\in \mathcal R(t_1)\cap \mathcal{U}$ and $(c_2,a_2)\in \mathcal R(t_2)\cap \mathcal{U}$ with
 $$\Phi(c_1,a_1)=\Phi(c_2,a_2).$$
 Note that $f_{c_1,a_1}$ and $f_{c_2,a_2}$ are hyperbolic.
 Let 
 $$\psi=H((c_2,a_2), \cdot) \circ H((c_1,a_1), \cdot)^{-1}.$$
 Clearly $\psi$ is a quasi-conformal map from $\mathbb C$
 to $\mathbb C$, 
 holomorphic in $\mathbb C-X_{c_1,a_1}$, and $\psi(\Gamma_{c_1,a_1})=\Gamma_{c_2,a_2}$.
 We may get a modification $\psi_0$ of $\psi$ so that $\psi_0$ matches $\psi$ in $(\mathbb{C}-X_{c_1,a_1})\cup \Gamma_{c_1,a_1}$, and
 $\psi_0$ is the identity map under the B\"ottcher coordinates
 defined in $Y=f_{c_1,a_1}^{-M}(\mathbb C-X_{c_1,a_1})$, here the integer $M\geq 0$ is chosen so that $f_{c_1,a_1}(-c_1)\in Y$.
  In this way $\psi_0$ gives a conjugacy between $f_{c_1,a_1}$ and $f_{c_2,a_2}$ on the postcritical set of $f_{c_1,a_1}$.

 The relation  $\Phi(c_1,a_1)=\Phi(c_2,a_2)$ implies that $f_{c_1,a_1}, f_{c_2,a_2}$ have the same critical dynamical positions.
 This allows one to get a sequence of qc-maps $\psi_k$'s by the lifting process
 $$f_{c_2,a_2}\circ \psi_{k+1}=\psi_{k}\circ f_{c_1,a_1},$$
so that $\psi_{k+1}$ and $\psi_{k}$ are isotopic rel the postcritical set of $f_{c_1,a_1}$,  holomorphic and identical on $f_{c_1,a_1}^{-k}(Y)$.
 The dilatations of $\psi_k$'s are uniformly bounded, so they have a limit $\psi$, which is  a quasi-conformal map on $\mathbb C$, holomorphic in the Fatou set of $f_{c_1,a_1}$. Since $f_{c_1,a_1}$ is hyperbolic, its Julia set has zero measure, we conclude that $\psi$ is a conformal map. One has $\psi=id$ and $(c_1,a_1)=(c_2,a_2)$.  
  This contradicts the fact that $(c_1,a_1)\neq (c_2,a_2)$.    
  \end{proof}

 \begin{rmk} By Theorem \ref{regularity}, one can state Proposition \ref{dyn-para} as follows:
 Suppose the parameter ray $\mathcal{R}(t)$ lands at $(c,a)$.
  Write $V=U_{c,a}(f_{c,a}^{\ell(\mathcal H)}(c))$. 
 
 1. If $d_\omega t$ is $2$-periodic, then $f_{c,a}$ has a parabolic point on $\partial V$.
 
 2. If  $d_\omega t$ is not $2$-periodic, then $-c\in\partial V$.
  \end{rmk}

 Lastly, the following fact has some independent interest.
 
 \begin{pro} Let ${\rm Bif}(\mathcal S_p)$ be the bifurcation locus of $\mathcal S_p$ and $\mathcal E$ be a compact subset of $\mathcal C(\mathcal S_p)$. For any component $\mathcal U$  of $\mathcal S_p-\mathcal E$,  either
 
  1. $\mathcal U$ is unbounded in $\mathcal S_p$, or
 
 2. $\mathcal U \subset \mathcal C(\mathcal S_p)$ and ${\rm Bif}(\mathcal S_p)\cap\mathcal U=\emptyset$.
 \end{pro}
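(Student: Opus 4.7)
The plan is to handle the two alternatives by deploying the escape-rate function of the free critical point $-c$, together with the maximum principle and the Ma\~n\'e--Sad--Sullivan stability theorem. Assume that $\mathcal U$ is bounded in $\mathcal S_p$, so that $\overline{\mathcal U}$ is a compact subset of $\mathcal S_p$ and the topological boundary $\partial \mathcal U$ in $\mathcal S_p$ lies in $\mathcal E \subset \mathcal C(\mathcal S_p)$; the task is to verify both bullets of the second alternative.

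First I would show that $\mathcal U \subset \mathcal C(\mathcal S_p)$. The function $G: \mathcal S_p \to [0,+\infty)$ defined by $G(c,a) = G^{\infty}_{c,a}(-c)$ is continuous, nonnegative, and plurisubharmonic on $\mathcal S_p$ (being the restriction to the smooth curve $\mathcal S_p$ of a psh function on $\mathbb C^2$). Its zero set is exactly $\mathcal C(\mathcal S_p)$, so $G$ vanishes on $\partial \mathcal U$. The maximum principle for subharmonic functions on the bounded open region $\mathcal U$ of the one-dimensional complex manifold $\mathcal S_p$ then forces $G \leq 0$ on $\mathcal U$, whence $G \equiv 0$ there, i.e. $\mathcal U \subset \mathcal C(\mathcal S_p)$.

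Next I would rule out bifurcations. Since $G \equiv 0$ on the compact set $\overline{\mathcal U}$, the forward orbit of $-c$ stays in the filled Julia set of $f_{c,a}$ for every $(c,a) \in \overline{\mathcal U}$. A standard uniform escape-radius argument, using compactness of $\overline{\mathcal U}$ together with continuity of $(c,a) \mapsto f_{c,a}$, supplies a single constant $R>0$ such that $|f^n_{c,a}(-c)| \leq R$ for all $n \geq 0$ and all $(c,a) \in \overline{\mathcal U}$. Hence $\{(c,a) \mapsto f^n_{c,a}(-c)\}_{n\geq 0}$ is locally uniformly bounded, and therefore a normal family, on $\mathcal U$. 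Because the marked critical point $c$ is periodic of period $p$ everywhere on $\mathcal S_p$, the corresponding sequence of parameter functions is trivially normal as well. Applying Ma\~n\'e--Sad--Sullivan to the holomorphic family $(f_{c,a})_{(c,a) \in \mathcal U}$ over the one-dimensional base $\mathcal U$ then places $\mathcal U$ inside the $J$-stability locus, i.e. $\mathrm{Bif}(\mathcal S_p) \cap \mathcal U = \emptyset$.

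The only delicate point, more a matter of bookkeeping than of substance, is to justify that the Ma\~n\'e--Sad--Sullivan framework is correctly instantiated on the one-dimensional slice $\mathcal S_p$ rather than on the ambient $\mathbb C^2$, and to check that both critical markings have been accounted for. Once that is in place, everything reduces to a short application of the maximum principle followed by a one-line normality argument, so I do not anticipate a genuine obstacle.
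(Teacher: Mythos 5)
Your proof is correct and follows essentially the same route as the paper: a maximum-principle argument controlling the orbit of the free critical point $-c$, followed by normality of the orbit maps and hence $J$-stability on $\mathcal U$. The only difference is cosmetic: you apply the maximum principle to the subharmonic function $G^\infty_{c,a}(-c)$ and then invoke compactness of $\overline{\mathcal U}\subset\mathcal C(\mathcal S_p)$ for a uniform escape radius, whereas the paper applies the maximum modulus principle directly to the holomorphic orbit maps $F_k(c,a)=f^k_{c,a}(-c)$, bounding them on $\partial\mathcal U$ via the Koebe-type diameter estimate $|\zeta-c|\le 4$ for $\zeta\in K(f_{c,a})$.
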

 \begin{proof} Assume that $\mathcal U$ is bounded in $\mathcal S_p$, we will show that ${\rm Bif}(\mathcal S_p)\cap\mathcal U=\emptyset$. To this end, consider the holomorphic maps  $F_k: \mathcal U\rightarrow \mathbb{C}$ defined by
 $$F_k(c,a)= f^k_{c,a}(-c), \ k\geq 0.$$
 Clearly $\partial \mathcal U\subset \mathcal C(\mathcal S_p)$.
  So for any $(c,a)\in\partial \mathcal U$, one has $F_k(c,a)\in K(f_{c,a})$ (the filled Julia set).
 By univalent function theory, one has $|\zeta-c|\leq 4, \ \forall \zeta\in K(f_{c,a})$. Therefore 
 $$|F_k(c,a)|\leq |c|+4\leq \sup_{(c',a')\in \partial \mathcal{U}}|c'|+4, \ \forall (c,a)\in \partial\mathcal U.$$
 By the maximum modulus principle, the above inequality holds for all $(c,a)\in \mathcal U$. Then 
 Montel's theorem asserts that $\{F_k\}$ is a normal family in $\mathcal U$. Equivalently,  ${\rm Bif}(\mathcal S_p)\cap\mathcal U=\emptyset$.
 \end{proof}

 \section {Capture component} \label{cc}
 
  The main purpose of this section is to show
 
 \begin{thm} \label{boundary-C} Every Type-C hyperbolic component is  a Jordan disk.
 \end{thm}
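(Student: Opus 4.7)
The plan is to extend the conformal isomorphism $\Phi:\mathcal{H}\to\mathbb{D}$, $(c,a)\mapsto B_{c,a}(f_{c,a}^l(-c))$, from Theorem \ref{parameterization}(3), to a homeomorphism $\overline{\mathcal{H}}\to\overline{\mathbb{D}}$. Here $l\geq 1$ is the smallest integer with $f_{c,a}^l(-c)\in\mathcal{A}_{c,a}$. I would mimic the parameter-ray machinery of Section \ref{br}: define $\mathcal{R}(t)=\Phi^{-1}((0,1)e^{2\pi it})$ and impressions $\mathcal{I}(t)$, and prove (a) each $\mathcal{I}(t)$ is a singleton and (b) $t\mapsto\mathcal{I}(t)$ is injective. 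Since $T_{f_{c,a}}(-c)$ is pre-periodic for Type-$C$ maps, Theorem \ref{c-rigidity} is unavailable, and the substitute tool is holomorphic motion of the attracting basin, which persists near any non-parabolic boundary parameter.

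First I would classify $\partial\mathcal{H}$: any $(c_0,a_0)\in\partial\mathcal{H}$ either (i) carries a parabolic cycle on $\partial\mathcal{A}_{c_0,a_0}$, or (ii) retains the attracting cycle of period $p$ through $c_0$ while $f_{c_0,a_0}^l(-c_0)\in\partial\mathcal{A}_{c_0,a_0}$. Indeed, if neither held, the holomorphic function $(c,a)\mapsto B_{c,a}(f_{c,a}^l(-c))$ would extend across $(c_0,a_0)$ with image in $\mathbb{D}$, contradicting the boundary assumption. Set $\mathcal{I}_0(t)=\{(c,a)\in\mathcal{I}(t):f_{c,a}\text{ has no parabolic cycle}\}$; every point of $\mathcal{I}_0(t)$ lies in case (ii), and in particular belongs to $\mathcal{C}_0(\mathcal{S}_p)$.

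At any $(c_0,a_0)\in\mathcal{I}_0(t)$ the attracting cycle, the basin $\mathcal{A}$, and the B\"ottcher coordinate $B_{c,a}$ on $U_{c,a}(f_{c,a}^l(-c))$ persist and depend holomorphically on $(c,a)$ over a neighborhood $\mathcal{U}\subset\mathcal{S}_p$ of $(c_0,a_0)$. Since $\partial U_{c_0,a_0}(f_{c_0,a_0}^l(-c_0))$ is a Jordan curve by \cite{RY}, $B_{c_0,a_0}$ extends continuously to the closure, and consequently $\Phi(c,a)\to B_{c_0,a_0}(f_{c_0,a_0}^l(-c_0))\in\partial\mathbb{D}$ along any sequence $(c,a)\to(c_0,a_0)$ in $\mathcal{H}$, so $\overline{\Phi}$ is well-defined and continuous at $(c_0,a_0)$. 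To rule out that two distinct points of $\mathcal{I}_0(t)$ coexist, I would apply S{\l}odkowski to extend the holomorphic motion of $\overline{\mathcal{A}_{c_0,a_0}}$ to a motion $H:\mathcal{U}\times\mathbb{C}\to\mathbb{C}$; for any $(c,a)\in\mathcal{I}_0(t)\cap\mathcal{U}$, $H((c,a),\cdot)$ conjugates the dynamics on the basins and sends the landing point of the internal ray of angle $t$ in $U_{c_0,a_0}(f_{c_0,a_0}^l(-c_0))$ to the landing point of the same ray in $U_{c,a}(f_{c,a}^l(-c))$. Combining this with Lemma \ref{cont-ray} applied along $\mathcal{R}(t)$, and with the fact that $\mathcal{S}_p$ is one-dimensional so the position of $-c$ on the Julia set (together with the combinatorics prescribed by $t$) pins down the parameter, I would conclude $(c,a)=(c_0,a_0)$, giving $|\mathcal{I}_0(t)\cap\mathcal{U}|=1$. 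Connectedness of $\mathcal{I}(t)$ and discreteness of $\mathcal{I}(t)\setminus\mathcal{I}_0(t)$ then force $|\mathcal{I}(t)|=1$ and hence local connectivity of $\partial\mathcal{H}$.

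Finally, to promote $\partial\mathcal{H}$ to a Jordan curve, suppose $\mathcal{I}(t_1)=\mathcal{I}(t_2)=\{(c_0,a_0)\}$ with $t_1\neq t_2$. In case (ii) this would force two distinct internal rays of $U_{c_0,a_0}(f_{c_0,a_0}^l(-c_0))$ to land at the single boundary point $f_{c_0,a_0}^l(-c_0)$, contradicting the Jordan property of that boundary. The main obstacle is the parabolic case (i): when the multiplier of the attracting cycle reaches the unit circle, both the B\"ottcher parameterization of $\mathcal{A}$ and the holomorphic motion of $\overline{\mathcal{A}}$ degenerate, and the arguments above do not extend directly. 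I would handle such $(c_0,a_0)$ by approximating them along $\mathcal{R}(t)$ by hyperbolic parameters, transporting information via Lemma \ref{cont-ray} applied to the rays landing on the pre-parabolic combinatorial structure, and using that parabolic parameters of period dividing $p$ on $\partial\mathcal{H}$ form a discrete set, so they cannot contribute a continuum to any single impression.
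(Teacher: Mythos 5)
Your high-level scaffolding matches the paper: parameterize $\mathcal{H}$ by $\Phi$, define parameter rays and impressions, show impressions are singletons (local connectivity) and that distinct rays land at distinct points (Jordan). However, two of the paper's essential ingredients are missing from your proposal, and these are precisely where the technical work lies.

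First, the uniqueness step. You reduce to: if $(c_1,a_1),(c_2,a_2)\in\mathcal{I}_0(t)$ both have $f^l(-c_i)$ landing at the same dynamical position, then they are equal, and you justify this by saying the one-dimensionality of $\mathcal{S}_p$ together with the combinatorics of $t$ ``pins down the parameter.'' This is an assertion, not an argument; it is precisely the rigidity content that needs proving. For Type-$C$, $T_f(-c)$ is pre-periodic, so Theorem~\ref{c-rigidity} cannot be invoked. The paper's Lemma~\ref{imp-h} supplies the actual mechanism: S\l{}odkowski's theorem gives a holomorphic motion $H$ of the whole sphere extending the explicit motion $h$ of the post-critical set (including the finitely many pre-landing points $f^j_{c_1,a_1}(v_{c_1,a_1})$ for $0\leq j\leq l-2$); then a Thurston-type pullback produces a sequence of quasiconformal maps $\psi_j$ conjugating $f_{c_1,a_1}$ to $f_{c_2,a_2}$ up to depth $j$; the limit $\psi_\infty$ is a quasiconformal conjugacy that is holomorphic on the Fatou set; and finally Theorem~\ref{cubic-property}(2) --- no invariant line fields on $J(f)$ --- is indispensable to upgrade $\psi_\infty$ to a conformal, hence affine, hence identity map. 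Without the no-invariant-line-fields input, you only obtain a qc conjugacy, which does not force $(c_1,a_1)=(c_2,a_2)$.

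Second, your dichotomy is the wrong one. The paper does not split $\mathcal{I}(t)$ into parabolic vs.~non-parabolic parameters; it splits into $\mathcal{I}^*(t)=\mathcal{I}(t)\setminus\overline{\mathcal{H}_{AB}}$ and $\mathcal{I}(t)\cap\overline{\mathcal{H}_{AB}}$. The reason is that the holomorphic motion constructed in Lemma~\ref{imp-h} requires a disk $\mathcal{D}$ on which $f_{c,a}^{l-1}(-c)\notin\overline{\mathcal{A}}_{c,a}$, and this can fail precisely when $(c,a)$ is near $\partial\mathcal{H}_{AB}$, where the free critical point $-c$ itself touches the closure of the periodic basin. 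The paper's Lemma~\ref{imp-2} handles the case $\mathcal{I}(t)\subset\partial\mathcal{H}_{AB}$ by a completely different argument: it exploits that Type-$A$/$B$ components are already known to be Jordan disks (Theorem~\ref{regularity}), chooses a parameter ray $\mathcal{R}_\omega(\alpha)$ in a Type-$\omega$ component landing at $(c,a)$, relates the angles via the map on internal rays, and derives discreteness from an angle identity $\varepsilon(Y)2^M d_\omega\alpha = 2^N t \pmod{\mathbb{Z}}$ plus a separating-graph argument. Your proposal has no substitute for this; the case $\mathcal{I}(t)\subset\overline{\mathcal{H}_{AB}}$ is simply not addressed.

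A further minor issue: your plan to extend $\Phi$ continuously by arguing that $B_{c,a}(f_{c,a}^l(-c))$ converges as $(c,a)\to(c_0,a_0)$ requires convergence of the B\"ottcher maps \emph{up to the basin boundary}, which is not automatic from the Jordan property of $\partial U_{c_0,a_0}(f_{c_0,a_0}^l(-c_0))$; one needs uniform control (e.g.\ equicontinuity of the boundary extensions), which is essentially equivalent to what you are trying to prove. The paper avoids this by working with $\Phi^{-1}$ via Carath\'eodory: singleton impressions give local connectivity of $\partial\mathcal{H}$, and injectivity of the boundary map gives Jordan. Also worth noting: your worry about handling parabolic parameters separately is largely unnecessary once Lemma~\ref{dyn-para2} is in place --- for $(c,a)\in\mathcal{I}^*(t)$ that lemma forces $f^l(-c)\in\partial\mathcal{A}_{c,a}$, so $-c$ is not captured by any parabolic basin, hence no parabolic cycle exists for such parameters.
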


 Let $\mathcal{H}$ be a Type-C component and $f_{c,a}\in \mathcal{H}$. Let
  $l>0$ be the smallest integer so that
  $f_{c,a}^l(-c)\in \mathcal{A}_{c,a}$. By Theorem \ref{parameterization}, the map
 $$\Phi: \mathcal{H}\rightarrow \mathbb{D}, \  ({c},a)\mapsto B_{c,a}(f_{c,a}^l(-c))$$
 is a conformal isomorphism, where $B_{c,a}$ is the B\"ottcher map of $f^p_{{c},a}$ defined in $U_{c,a}(f_{c,a}^l(-c))$.
Let $\kappa\in[0,p)$ be the unique integer so that  
$$U_{c,a}(f_{c,a}^l(-c))=U_{c,a}(f_{c,a}^\kappa(c)).$$
 
  The {\it parameter ray} $\mathcal{R}(t)$ of angle $t\in \mathbb{R}/\mathbb{Z}$ in $\mathcal{H}$ is 
 $\Phi^{-1}((0,1)e^{2\pi i t}).$
  The  {\it impression } $\mathcal{I}(t)$ of $\mathcal{R}(t)$  is defined as 
  $\mathcal{I}(t)=\bigcap_{k\geq 1}\overline{\mathcal{S}_k}(t)$, where
$$\mathcal{S}_k(t)=\Phi^{-1}(\{re^{2\pi i \theta}; r\in (1-1/k,1), \ \theta\in(t-1/k,t+1/k)\}).$$

Let  $v_{c,a}=f_{c,a}(-c)$ be the free critical value.
 For $(c,a)\in\mathcal{H}$, let $V_{c,a}$ be  the Fatou component of $f_{c,a}$ containing $v_{c,a}$.  Clearly,  the {\it center} $\sigma=\sigma(c,a)$ of  $V_{c,a}$, defined as  the unique point $\sigma\in  V_{c,a}$ satisfying $f_{c,a}^{l-1}(\sigma)=f_{c,a}^{\kappa}(c)$,   moves continuously with respect to   $(c,a)\in\mathcal{H}$.  The center map $(c,a)\mapsto \sigma$ has a continuous extension to $\partial\mathcal{H}$. Therefore,  when $(c,a)\in \partial\mathcal{H}$, the point $\sigma(c,a)$ and Fatou component  $V_{c,a}$ containing $\sigma(c,a)$ are  well-defined.
 
 Let  $\mathcal{H}_{AB}$ be the union of all Type-A and Type-B hyperbolic components. 
 Clearly, $\overline{\mathcal{H}_{AB}}$  is compact because it is a closed subset of $\mathcal{C}(\mathcal S_p)$.

 \begin{lem}\label{dyn-para2} For any $t\in [0,1)$ and any $(c,a)\in \mathcal{I}(t)\setminus\overline{\mathcal{H}_{AB}}$, the dynamical internal ray $R^{V_{c,a}}_{c,a}(t)$ lands at $v_{c,a}\in\partial V_{c,a}$.
 \end{lem}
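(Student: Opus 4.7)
The plan is first to establish a clean identity in the interior of $\mathcal{H}$, then to pass to the boundary by continuity, using Roesch--Yin \cite{RY} and Carath\'eodory as the main geometric input. For $(c',a') \in \mathcal{H}$, write $V_k$ for the Fatou component containing $f_{c',a'}^k(-c')$, so that $V_1 = V_{c',a'}$ and $V_l = U_{c',a'}(f_{c',a'}^{\kappa}(c'))$. I claim $\pm c' \notin V_k$ for $1 \leq k \leq l-1$: if $c' \in V_k$, then $V_k \subset \mathcal{A}_{c',a'}$, contradicting the minimality of $l$; if $-c' \in V_k$, then the sequence $(V_j)_{j \geq 0}$ with $V_0 := U_{c',a'}(-c')$ would be eventually periodic from the very beginning and yet still map into $\mathcal{A}_{c',a'}$ after finitely many steps, forcing $-c' \in \mathcal{A}_{c',a'}$, contradicting Type-$C$. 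Consequently $f_{c',a'}^{l-1}: V_{c',a'} \to U_{c',a'}(f_{c',a'}^{\kappa}(c'))$ is a conformal isomorphism sending $\sigma(c',a')$ to $f_{c',a'}^{\kappa}(c')$ and $v_{c',a'}$ to $f_{c',a'}^l(-c')$. Pulling back the B\"ottcher coordinate through this conformal conjugacy, $v_{c',a'}$ lies on $R^{V_{c',a'}}_{c',a'}(\theta')$ at internal radius $r'$, where $r' e^{2\pi i \theta'} = \Phi(c',a')$.

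Now fix $(c,a) \in \mathcal{I}(t) \setminus \overline{\mathcal{H}_{AB}}$ and choose a sequence $(c_n, a_n) \to (c,a)$ with $(c_n, a_n) \in \mathcal{S}_{k_n}(t) \cap \mathcal{H}$ and $k_n \to \infty$, so $\Phi(c_n, a_n) = r_n e^{2\pi i \theta_n}$ with $r_n \to 1$ and $\theta_n \to t$. By continuity of iterates, $v_{c_n,a_n} \to v_{c,a}$ and $f_{c_n,a_n}^l(-c_n) \to f_{c,a}^l(-c)$. The argument of the previous paragraph applies verbatim to the limiting map $f_{c,a}$: the assumption $(c,a) \notin \overline{\mathcal{H}_{AB}}$ is precisely what rules out $-c \in \mathcal{A}_{c,a}$, so the same exclusion of critical points from $V_1, \ldots, V_{l-1}$ goes through and $f_{c,a}^{l-1}: V_{c,a} \to U_{c,a}(f_{c,a}^{\kappa}(c))$ is again conformal. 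By Roesch--Yin, $\partial U_{c,a}(f_{c,a}^{\kappa}(c))$ is a Jordan curve, so Carath\'eodory makes $B_{c,a}$ extend to a homeomorphism of closures; pulling back by the conformal map, $\partial V_{c,a}$ is also a Jordan curve and $R^{V_{c,a}}_{c,a}(t)$ lands at a unique point $w' \in \partial V_{c,a}$.

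It remains to identify $w'$ with $v_{c,a}$, equivalently to identify $f_{c,a}^l(-c)$ with the landing point $w$ of $R^{U_{c,a}(f_{c,a}^{\kappa}(c))}_{c,a}(t)$. In a neighborhood of $(c,a)$ disjoint from $\overline{\mathcal{H}_{AB}}$, the super-attracting cycle on $\mathcal{S}_p$ persists and the free critical point does not interact with $\mathcal{A}_{c,a}$, so the periodic Fatou component $U(f^{\kappa}(c))$ and its boundary admit a holomorphic motion, under which the B\"ottcher extension varies continuously in the parameter. Since $f_{c_n,a_n}^l(-c_n)$ sits on the internal ray at angle $\theta_n \to t$ with radius $r_n \to 1$ and converges to $f_{c,a}^l(-c)$, comparison with the continuous boundary extension forces $f_{c,a}^l(-c) = w$, and pulling back by the homeomorphic extension of $f_{c,a}^{l-1}$ yields $v_{c,a} = w' \in \partial V_{c,a}$. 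The main technical obstacle is exactly this last continuity step: the boundary B\"ottcher coordinate must vary continuously even as $v_{c',a'}$ slides from the interior of $V_{c',a'}$ onto its boundary, and the exclusion of $\overline{\mathcal{H}_{AB}}$ is what prevents the Fatou component structure from degenerating in the limit and guarantees the holomorphic motion used here.
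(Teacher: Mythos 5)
Your proposal follows essentially the same route as the paper: set up the conformal relation between $V_{c,a}$ and $W_{c,a}:=U_{c,a}(f_{c,a}^\kappa(c))$ under $f_{c,a}^{l-1}$, construct a holomorphic motion of $\partial W_{c,a}$ on a neighborhood of $(c,a)$ disjoint from $\overline{\mathcal{H}_{AB}}$, take a sequence in $\mathcal{H}\cap\mathcal{S}_k(t)$, and pass the landing statement from $W_{c,a}$ to $V_{c,a}$. The paper carries out exactly the step you flag as ``the main technical obstacle'': the continuity of the boundary extension is obtained by showing that the map $h((c,a),z)=(B^{W_{c,a}}_{c,a})^{-1}\circ B^{W_{c_0,a_0}}_{c_0,a_0}(z)$ on $W_{c_0,a_0}$ is a holomorphic motion (holomorphic in the parameter, injective, identity at base point), extending it over $\widehat{\mathbb C}$ via Slodkowski, and then deducing that $\partial W_{c,a}$, hence $f_{c,a}^{-k}(\partial W_{c,a})$ and the rays $R^{W_{c,a}}_{c,a}(t)$, move continuously in Hausdorff topology. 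You should not merely gesture at this; it is the core of the argument and it is exactly what makes your final identification $f_{c,a}^l(-c)=w$ work.

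One imprecision to repair: the claim that the first paragraph ``applies verbatim to the limiting map'' is not literal. For $(c,a)\in\mathcal{I}(t)\setminus\overline{\mathcal{H}_{AB}}$, the points $f_{c,a}^k(-c)$ for $1\leq k\leq l-1$ lie on the Julia set (this is what the lemma itself asserts for $k=1$), so the Fatou components $V_1,\dots,V_{l-1}$ containing those orbit points are not defined. The correct object is the component $V_{c,a}$ containing the continuously-extended center $\sigma(c,a)$, as the paper sets up just before the lemma; one then needs to exclude critical points from $V_{c,a}\cup f_{c,a}(V_{c,a})\cup\cdots\cup f_{c,a}^{l-2}(V_{c,a})$, not from components around the free critical orbit. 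Here $-c$ is automatically excluded (it is on $J(f_{c,a})$), and $c$ is excluded by the continuity of the center map and the fact that $\sigma(c',a')\neq c'$ throughout $\mathcal{H}$; alternatively, the paper avoids this issue altogether by working only with the set-theoretic relation $f_{c,a}^{l-1}(R^{V_{c,a}}_{c,a}(t))=R^{W_{c,a}}_{c,a}(t)$ valid on $\overline{\mathcal{H}}\cap\mathcal{U}$, together with continuity of $f_{c,a}^{1-l}(\partial W_{c,a})$. The invocation of Roesch--Yin and Carath\'eodory is fine but somewhat redundant given the holomorphic-motion continuity, which already forces the limit point to be the landing point.
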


 \begin{proof} Note that for any $(c_0,a_0)\in\mathcal{I}(t)\setminus\overline{\mathcal{H}_{AB}}$,  there is a disk neighborhood  $\mathcal{U}$ of $(c_0,a_0)$ contained in  $\mathcal{S}_p\setminus \overline{\mathcal{H}_{AB}}$.

 Let  $W_{c,a}:=U_{c,a}(f_{c,a}^\kappa(c))$ for $(c,a)\in \mathcal U$.  We first claim that  $\partial W_{c,a}$ moves holomorphically   with respect to $(c,a)\in\mathcal{U}$. To see this,  we define a map $h: \mathcal{U}\times W_{c_0,a_0}\rightarrow \mathbb{{C}}$ by $h((c,a), z)=(B^{W_{c,a}}_{c,a})^{-1}\circ B^{W_{c_0,a_0}}_{c_0,a_0}(z)$, where 
 $B^{W_{c,a}}_{c,a}$ is the B\"ottcher map of $f_{c,a}^p$ defined in $W_{c,a}$.
   It satisfies:

%

 (1). Fix any $z\in W_{c_0,a_0}$, the map $(c,a)\mapsto h((c,a), z)$ is holomorphic;

 (2). Fix any $(c,a)\in \mathcal{U}$, the map $z\mapsto h((c,a), z)$ is injective;

 (3). $h((c_0,a_0), z)=z$ for all $z\in W_{c_0,a_0}$.

 These properties imply that $h$ is a holomorphic motion parameterized by $\mathcal{U}$,  with base point $(c_0,a_0)$. By the Holomorphic Motion Theorem \cite{Sl}, there is a holomorphic motion $H:\mathcal{U}\times
\mathbb{\widehat{C}}\rightarrow \mathbb{\widehat{C}}$ extending $h$ and for any $(c,a)\in \mathcal{U}$,
we have $H((c,a),\partial  W_{c_0,a_0})=\partial W_{c,a}$. Therefore  $\partial W_{c,a}$ moves holomorphically  with respect to $(c,a)\in\mathcal{U}$. The claim is proved.

It follows that  for any $k\geq 0$, the set $f_{c,a}^{-k}(\partial W_{c,a})$ moves continuously in Hausdorff topology
 with respect to $(c,a)\in\mathcal{U}$.
  By the assumption  $(c_0,a_0)\in \mathcal{I}(t)\setminus\overline{\mathcal{H}_{AB}}$, there exist a sequence of parameters $(c_n,a_n)$'s in $\mathcal{H}\cap \mathcal U$ and  a sequence of angles $t_n$'s, such that
  \bess &(c_n,a_n)\rightarrow (c_0,a_0)\text{ and } t_n\rightarrow t \text{ as } n\rightarrow\infty,&\\
   &f_{c_n,a_n}^l(-c_n)\in R^{W_{c_n,a_n}}_{c_n,a_n}(t_n)  \text{ for all } n\geq 1.&
    \eess

By the continuity of $(c,a)\mapsto \partial W_{c,a}$  (implying the continuity of the ray $R^{W_{c,a}}_{c,a}(t)$ with respect to $((c,a),t)\in \mathcal U\times \mathbb S$), we have $f_{c_0,a_0}^{l}(-c_0)\in  \partial W_{c_0,a_0}$
and the internal ray $R_{c_0,a_0}^{W_{c_0,a_0}}(t)$ lands at $f_{c_0,a_0}^{l}(-c_0)$.   By the continuity of
 $(c,a)\mapsto  f_{c,a}^{1-l}(\partial W_{c,a})$
   and the fact $f_{c,a}^{l-1}(R_{c,a}^{V_{c,a}}(t))=R_{c,a}^{W_{c,a}}(t)$ for $(c,a)\in \overline{\mathcal{H}}\cap \mathcal{U}$, we have that $v_{c_0,a_0}\in \partial V_{c_0,a_0}$ and
 $R^{V_{c_0,a_0}}_{c_0,a_0}(t)$ lands at $v_{c_0,a_0}$.
 \end{proof}

 To show that $\partial{\mathcal{H}}$ is a Jordan curve, we need some lemmas, whose proofs   are very technical. 
 For each $t$, let's define
 $$\mathcal{I}^*(t)= \mathcal{I}(t)\setminus \overline{\mathcal{H}_{AB}}.$$

 \begin{lem}\label{para-ray-c} For all $(c,a)\in \mathcal{I}^*(t)$, we have either  $f_{c,a}^{l+p}(-c)=f_{c,a}^{l}(-c),$
or
$$f_{c,a}^{l-1}(-c)\notin \partial \mathcal{A}_{c,a}  \text{ and }  f_{c,a}^{l}(-c)\in \partial \mathcal{A}_{c,a}.$$
 \end{lem}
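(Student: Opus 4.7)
The plan is to split the claim into two parts: first, $f_{c,a}^l(-c)\in\partial\mathcal A_{c,a}$ always holds on $\mathcal I^*(t)$; second, if in addition $f_{c,a}^{l-1}(-c)\in\partial\mathcal A_{c,a}$ then $f_{c,a}^{l+p}(-c)=f_{c,a}^l(-c)$. For the first, Lemma~\ref{dyn-para2} yields $v_{c,a}=f_{c,a}(-c)\in\partial V_{c,a}$. On the interior of $\mathcal H$ the iterate $f_{c,a}^{l-1}\colon V_{c,a}\to W_{c,a}$ is a conformal isomorphism sending $\sigma(c,a)$ to $f_{c,a}^\kappa(c)$; since the center map extends continuously to $\partial\mathcal H$ and the super-attracting cycle of $c$ is robust across $\overline{\mathcal H}$, this conformal identification persists on $\mathcal I^*(t)$. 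Pushing the boundary point $v_{c,a}$ forward by $f_{c,a}^{l-1}$ then gives $f_{c,a}^l(-c)\in\partial W_{c,a}\subset\partial\mathcal A_{c,a}$.

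For the second part, assume $f_{c,a}^{l-1}(-c)\in\partial U$ for some $U\in\mathcal B_{c,a}$. Then $f_{c,a}^l(-c)\in\partial f_{c,a}(U)\cap\partial W_{c,a}$. If $f_{c,a}(U)\neq W_{c,a}$, Proposition~\ref{common-point} identifies this intersection with a single point $q$ satisfying $f_{c,a}^p(q)=q$, whence $f_{c,a}^{l+p}(-c)=q=f_{c,a}^l(-c)$ and we are done. The remaining possibility is $f_{c,a}(U)=W_{c,a}$, which forces $U=U_{c,a}(f_{c,a}^{\kappa-1}(c))$, the unique preimage of $W_{c,a}$ inside the super-attracting cycle, and I will rule it out.

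To exclude this last subcase, set $\zeta=f_{c,a}^{l-1}(-c)$ and let $V_{l-2}$ denote the Fatou component of $f_{c,a}$ obtained as the continuous limit, from parameters in $\mathcal H$, of the Fatou component containing $f_{c_0,a_0}^{l-1}(-c_0)$. Since $V_{l-2}$ lies outside the super-attracting cycle in the interior and no merger occurs on $\partial\mathcal H\cap\mathcal I^*(t)$, we have $V_{l-2}\neq U$; meanwhile $\zeta\in\partial V_{l-2}\cap\partial U$ because both components map to $W_{c,a}$ under $f_{c,a}$. The key local observation is that whenever two distinct components of $f_{c,a}^{-1}(W_{c,a})$ share a boundary point, that point must be a critical point of $f_{c,a}$, for otherwise $f_{c,a}$ is a local homeomorphism there and $f_{c,a}^{-1}(W_{c,a})$ sits on one side of its boundary near the point. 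Since $c$ lies in the interior of $U_{c,a}(c)$, the shared critical point must be $-c$, so $\zeta=-c$ and $-c$ is $(l-1)$-periodic; but then $(f_{c,a}^{l-1})'(-c)=0$ makes $-c$ super-attracting, so $(c,a)$ is hyperbolic, contradicting $(c,a)\in\partial\mathcal H$. I expect the main obstacle to be this last step, namely justifying the persistence and distinctness of the Fatou-component chain $V_{c,a}\to V_1\to\cdots\to V_{l-2}$ on $\partial\mathcal H\cap\mathcal I^*(t)$; this is where removing $\overline{\mathcal H_{AB}}$ is essential, since only a Type-A or Type-B degeneration could identify $V_{l-2}$ with $U$ or collapse the chain.
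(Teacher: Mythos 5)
Your proposal follows essentially the paper's route: Lemma \ref{dyn-para2} places $v_{c,a}$ on $\partial V_{c,a}$ (whence $f_{c,a}^l(-c)\in\partial\mathcal A_{c,a}$), and then Proposition \ref{common-point} handles the case where the two basins touching $f_{c,a}^l(-c)$ differ, while the remaining subcase is excluded by showing the shared boundary point $f_{c,a}^{l-1}(-c)$ would have to be $-c$, contradicting that $-c$ cannot be a periodic point of $J(f_{c,a})$. Your local-homeomorphism expansion of that last subcase correctly fills in what the paper compresses into ``necessarily $f_{c,a}^{l-1}(-c)=-c$,'' and the worry you flag about the distinctness of $V_{l-2}$ and $U$ is genuine but is equally glossed over by the paper's one-line assertion.
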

 \begin{proof}  By Lemma \ref{dyn-para2}, we know $f_{c,a}^{l-1}(-c)\in \partial f_{c,a}^{l-2}(V_{c,a})$.
If $f_{c,a}^{l-1}(-c)\in \partial \mathcal{A}_{c,a}$, then $f_{c,a}^{l-1}(-c)\in  \partial f_{c,a}^{l-2}(V_{c,a})\cap \partial U_{c,a}(f_{c,a}^{m}(c))$ for some $m$. 
Let $W_1=f_{c,a}^{l-1}(V_{c,a})$ and $W_2=U_{c,a}(f_{c,a}^{m+1}(c))$, clearly $W_1,W_2\in \mathcal B_{c,a}$.
If $W_1=W_2$, then necessarily 
$f_{c,a}^{l-1}(-c)=-c$, 
 implying that $-c$ is a periodic point on Julia set, impossible!  So we have $W_1\neq W_2$ and $f_{c,a}^l(-c)\in \partial W_1\cap \partial W_2$.  By Proposition \ref{common-point}, we get
$f_{c,a}^{l+p}(-c)=f_{c,a}^{l}(-c)$. 
 \end{proof}

\begin{lem} \label{imp-h} For each $t$, the set 
$\mathcal{I}^*(t)$  is either empty or a singleton.
\end{lem}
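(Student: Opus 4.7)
The plan is to adapt the architecture of the proof of Theorem \ref{regularity}, combining the holomorphic motion of a $p$-admissible puzzle graph with the combinatorial rigidity Theorem \ref{c-rigidity}, now using Lemma \ref{dyn-para2} as the Type-C analogue of Proposition \ref{dyn-para}.

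Fix a candidate $(c_0,a_0)\in\mathcal I^*(t)$ and first treat the generic case where $f_{c_0,a_0}$ has no parabolic cycle and where $f_{c_0,a_0}^{l+p}(-c_0)\neq f_{c_0,a_0}^l(-c_0)$. By Lemma \ref{para-ray-c}, $f_{c_0,a_0}^l(-c_0)\in\partial\mathcal A_{c_0,a_0}$ but is not periodic, so $f_{c_0,a_0}\in\mathcal C_0(\mathcal S_p)$ and Theorem \ref{adm-puzzle} supplies a $p$-admissible puzzle $\Gamma$ for it. Applying Lemma \ref{cont-ray} to the finitely many rays defining $\Gamma$ and then invoking \cite{Sl}, I extend $\Gamma$ to a holomorphic motion $H:\mathcal U\times\mathbb C\to\mathbb C$ over a small neighborhood $\mathcal U\ni(c_0,a_0)$ with base point $(c_0,a_0)$. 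For any $(c,a)\in\mathcal I^*(t)\cap\mathcal U$, Lemma \ref{dyn-para2} says $v_{c,a}\in\partial V_{c,a}$ is the landing point of the internal ray of angle $t$; since $H$ respects angular coordinates within Fatou components, the critical point $-c$ must lie in $H((c,a),P_k(-c_0))$ at every depth $k$, so $f_{c_0,a_0}$ and $f_{c,a}$ are qc-combinatorially equivalent. The non-periodicity of $f_{c_0,a_0}^l(-c_0)$ translates, via the argument of Proposition \ref{bm1-1}, into aperiodicity of $T_{f_{c_0,a_0}}(-c_0)$, so Theorem \ref{c-rigidity} forces $(c,a)=(c_0,a_0)$. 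Hence each generic point of $\mathcal I^*(t)$ is isolated in $\mathcal I^*(t)$.

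The remaining parameters in $\mathcal I^*(t)$ either carry a parabolic cycle (a discrete condition on $\mathcal S_p$) or satisfy the Misiurewicz-type identity $f_{c,a}^{l+p}(-c)=f_{c,a}^l(-c)$. Combined with the rigid combinatorial portrait supplied by Lemma \ref{para-ray-c}, the latter identity cuts out a proper analytic subset of $\mathcal S_p$ and hence a discrete set of parameters (non-triviality on $\mathcal S_p$ follows from the fact that $\Phi$ is a non-constant conformal isomorphism on $\mathcal H$). Therefore $\mathcal I^*(t)$ is discrete in $\mathcal S_p$ in its entirety.

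To upgrade discreteness to the singleton conclusion, I will use that $\mathcal I(t)$ is a compact connected continuum and that $\mathcal I^*(t)=\mathcal I(t)\setminus\overline{\mathcal H_{AB}}$ is open in $\mathcal I(t)$; if $\mathcal I^*(t)$ contained two distinct points $x_1,x_2$, I would join them by a subcontinuum of $\mathcal I(t)$ and transport the holomorphic motion along the arc to force $x_1=x_2$, in the same spirit as the final paragraph of the proof of Theorem \ref{regularity} where the identity $d_\omega t_1\equiv d_\omega t_2\pmod{\mathbb Z}$ was exploited to exclude multi-landings. I expect this globalizing step to be the main obstacle, since the impression is only a priori connected and need not be locally connected, so the interaction between the holomorphic motions near two putative candidates across the intervening piece of $\overline{\mathcal H_{AB}}$ will require a careful continuity argument before the local rigidity from Steps 1--2 can be promoted to global uniqueness.
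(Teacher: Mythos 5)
Your approach diverges from the paper's in a genuine and interesting way: where you funnel everything through qc-combinatorial equivalence of puzzles and Theorem \ref{c-rigidity}, the paper never touches the puzzle graph here. Instead it builds a holomorphic motion over a Jordan disk $\mathcal D\supset\mathcal E$ directly on a set $J$ that contains the postcritical set (external basin, the cycle $\mathcal A_{c_1,a_1}$, and the finitely many points $f^j_{c_1,a_1}(v_{c_1,a_1})$, $0\le j\le l-2$), runs a Thurston-style pullback/isotopy argument to produce a global qc conjugacy $\psi_\infty$, and then applies Theorem \ref{cubic-property}(2) (no invariant line fields) to conclude $\psi_\infty$ is affine, hence the identity. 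So the rigidity input is the line-field statement rather than the combinatorial rigidity theorem. Your local rigidity step is plausible and in the same spirit as the proof of Theorem \ref{regularity} (and, with the verifications you already sketch — $f_{c_0,a_0}\in\mathcal C_0(\mathcal S_p)$ via Lemma \ref{para-ray-c}, aperiodicity via the argument of Proposition \ref{bm1-1}), would be defensible.

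The genuine gap is exactly where you flag it, but you misdiagnose the difficulty, and the misdiagnosis leaves the proof unclosed. You propose to join two putative points $x_1,x_2\in\mathcal I^*(t)$ by a subcontinuum of $\mathcal I(t)$; that subcontinuum may run through $\overline{\mathcal H_{AB}}$, and indeed no motion or rigidity argument will transport across it. But this is the wrong continuum. The correct observation — implicit in the paper's opening line ``then there exist a connected and compact subset $\mathcal E$ of $\mathcal I^*(t)$ containing at least two points'' — is that $\mathcal I^*(t)$ is a \emph{relatively open} subset of the continuum $\mathcal I(t)$. If $(c_0,a_0)\in\mathcal I^*(t)$ and $\mathcal I(t)$ is not already a singleton, pick a closed ball $B\subset\mathcal S_p\setminus\overline{\mathcal H_{AB}}$ around $(c_0,a_0)$ small enough that $\mathcal I(t)\not\subset B$; by the boundary bumping lemma the component $\mathcal E$ of $(c_0,a_0)$ in $\mathcal I(t)\cap B$ meets $\partial B$, so $\mathcal E$ is a nondegenerate continuum contained entirely in $\mathcal I^*(t)$. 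There is no ``intervening piece of $\overline{\mathcal H_{AB}}$'' at all. With such an $\mathcal E$ in hand your argument does close: after discarding the discrete set of parabolic and Misiurewicz parameters, $\mathcal E$ still contains a generic point $(c_0,a_0)$, which your local rigidity shows is isolated in $\mathcal I^*(t)$, contradicting the fact that a nondegenerate continuum has no isolated points. (The paper instead takes two points of $\mathcal E$ and shows they are equal, which is cleaner because it does not rely on the discreteness heuristic.) Without this fix, the ``upgrade discreteness to singleton'' paragraph is speculative and does not constitute a proof; with it, your alternative route goes through.
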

\begin{proof}  If it is not true, then there exist a connected and compact subset $\mathcal{E}$ of $\mathcal{I}^*(t)$  containing at least two points.

  By Lemma \ref{dyn-para2}, the internal ray $R^{V_{c,a}}_{c,a}(t)$ lands at $v_{c,a}$ for all $(c,a)\in \mathcal{E}$.
By Thurston's theorem \cite{DH}, there are only finitely many pairs $(c,a)\in\mathcal S_p$ for which $f_{c,a}^{l+p}(-c)=f_{c,a}^{l}(-c)$.  
By Lemma \ref{para-ray-c} (and shrink  $\mathcal{E}$ if necessary), we may assume all $(c,a)\in\mathcal{E}$ satisfy $f_{c,a}^{l-1}(-c)\notin \partial \mathcal{A}_{c,a}$ and $f_{c,a}^{l}(-c)\in \partial \mathcal{A}_{c,a}.$

By  continuity, there is Jordan disk
 $\mathcal{D}$ with $\mathcal{E}\subset \mathcal{D}\subset \mathcal S_p-\overline{\mathcal{H}_{AB}}$, so that for all $(c,a)\in \mathcal{D}$, we have $f_{c,a}^{l-1}(-c)\notin \overline{\mathcal{A}}_{c,a}$. 

Now take two different pairs $(c_1, a_1), (c_2, a_2)\in \mathcal{E}$. 
Let
 $$J=\{f_{c_1,a_1}^j(v_{c_1,a_1}); 0\leq j \leq l-2\}\cup \mathcal{A}_{c_1,a_1}\cup A_{c_1,a_1}^{\infty}.$$
  It's clear that ${J}$ contains the post-critical set of $f_{c_1,a_1}$.
We define a continuous map $h: \mathcal{D}\times J\rightarrow \mathbb{\widehat{C}}$ satisfying that
$$h((c,a), z)=(B^{\infty}_{c,a})^{-1}\circ B^{\infty}_{c_1,a_1}(z)$$
  for all $((c,a),z)\in \mathcal{D}\times A_{c_1,a_1}^{\infty}$;
  and
$$h((c,a), z)=(f_{c,a}^{\kappa-i}|_{U_{c,a}(f_{c,a}^i(c))})^{-1}\circ B_{c,a}^{-1}\circ B_{c_1,a_1}\circ f_{c_1, a_1}^{\kappa-i}(z)$$
  for all $((c,a),z)\in \mathcal{D}\times U_{c_1,a_1}(f_{c_1,a_1}^i(c_1)), \ 1\leq i\leq \kappa$;
  and
   $$h((c,a), z)=f_{c,a}^{i-\kappa}\circ B_{c,a}^{-1}\circ B_{c_1,a_1}\circ
    (f_{c_1, a_1}^{i-\kappa}|_{U_{c_1,a_1}(f_{c_1,a_1}^i(c_1))})^{-1}(z)$$
  for all $((c,a),z)\in \mathcal{D}\times U_{c_1,a_1}(f_{c_1,a_1}^i(c_1)), \ \kappa< i\leq p$; and
     $$h((c,a), f_{c_1,a_1}^j(v_{c_1,a_1}))=f_{c,a}^j(v_{c,a}) $$
       for all $(c,a)\in \mathcal{D}$ and  $0\leq j\leq l-2$.

One may verify that $h$ is a holomorphic motion, parameterized by $\mathcal{D}$ and with base point $(c_1,a_1)$ (i.e. $h((c_1,a_1), \cdot)=id$). By the Holomorphic Motion  Theorem \cite{Sl}, there is a holomorphic motion $H:\mathcal{D}\times
\mathbb{\widehat{C}}\rightarrow\mathbb{\widehat{C}}$ extending $h$. We consider the restriction $H_0=H|_{\mathcal{E}\times
\mathbb{\widehat{C}}}$ of $H$.
  Note that fix any  $(c,a)\in \mathcal{E}$, the map  $z\mapsto H((c,a),z)$ sends
  the post-critical set of $f_{c_1,a_1}$ to that of $f_{c, a}$, preserving the dynamics on this set.
  By the  lifting property, there is a
  unique continuous map  $H_1: \mathcal{E}\times \mathbb{\widehat{C}}\rightarrow \mathbb{\widehat{C}}$ such that
$f_{c,a}(H_1((c,a),z))=H_0((c,a), f_{c_1,a_1}(z))$ for all $((c,a),z)\in\mathcal{E}\times\widehat{\mathbb{C}}$
and
   $H_1((c_1,a_1),\cdot)\equiv id$. It's not hard to see that $H_{1}$ is also a holomorphic motion.

 Set $\psi_0=H_0((c_2,a_2), \cdot)$ and $\psi_1=H_1((c_2,a_2), \cdot)$.
  Both  $\psi_0$ and $\psi_1$ are quasi-conformal maps, satisfying  $f_{c_2,a_2}\circ \psi_{1}=\psi_0\circ f_{c_1,a_1}$.
   One may verify that $\psi_0$ and $\psi_1$ are isotopic rel ${J}$.
Again by the lifting property, there is a sequence of quasi-conformal maps $\psi_j$'s satisfying that

 (a).  $f_{c_2,a_2}\circ \psi_{j+1}=\psi_j\circ f_{c_1,a_1}$
for all $j\geq0$,

(b). $\psi_{j+1}$ and $\psi_{j}$ are isotopic rel $f_{c_1,a_1}^{-j}({J})$.


The maps $\psi_j$'s form a normal family because their dilatations are
uniformly bounded above.  The limit map  $\psi_\infty$ of  $\psi_j$'s is quasi-conformal in $\mathbb{{C}}$,  holomorphic in the Fatou set $F(f_{c_1,a_1})$ of $f_{c_1,a_1}$ and  satisfies
$f_{c_2,a_2}\circ \psi_{\infty}=\psi_\infty\circ f_{c_1,a_1}$ in $F(f_{c_1,a_1})$. By continuity, $\psi_\infty$ is a conjugacy on $\mathbb{C}$.
By Theorem \ref{cubic-property}, the Julia set of $f_{c_1,a_1}$ carries no invariant line fields.
Therefore  $\psi_\infty$ is an affine map: $\psi_\infty(\zeta)=a\zeta+b$. 
Since $\psi_\infty$ is tangent to the identity map near $\infty$, we have  $\psi_\infty=id$ and $(c_1,a_1)=(c_2, a_2)$.
 This  contradicts our choice of  $(c_i,a_i)$.  
%
%
\end{proof}

It follows from Lemma \ref{imp-h}  that  the impression $\mathcal{I}(t)$ is either a singleton or
 contained in
   $\partial\mathcal{H}_{AB}$.  To analyze the latter case, we  shall prove

\begin{lem} \label{imp-2} If $\mathcal{I}(t)\subset \partial\mathcal{H}_{AB}$, then $\mathcal{I}(t)$ is a singleton. 
\end{lem}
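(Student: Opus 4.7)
The plan is to combine the finite topology of $\partial\mathcal{H}_{AB}$ with a combinatorial rigidity between the Type-C angle $t$ and the angle of the internal $\mathcal{H}'$-parameter ray landing at each point of $\mathcal{I}(t)$. There are only finitely many Type-A and Type-B components on $\mathcal{S}_p$ (remark after Theorem \ref{parameterization}), and each is a Jordan disk by Theorem \ref{regularity}; hence $\partial\mathcal{H}_{AB}$ is a finite union of Jordan curves. The connected compact set $\mathcal{I}(t)$ therefore lies in a single such Jordan curve $\partial\mathcal{H}'$, and so is either a singleton or a non-degenerate closed sub-arc $\alpha\subset\partial\mathcal{H}'$. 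The task is to exclude the second alternative.

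Since $\mathcal{H}'$ is a Jordan disk, Carath\'eodory's theorem extends its conformal uniformization $\Psi'=\sqrt[d_\omega]{\Phi'}:\mathcal{H}'\to\mathbb{D}$ (with $\Phi'$ as in Theorem \ref{parameterization}) to a homeomorphism $\overline{\mathcal{H}'}\to\overline{\mathbb{D}}$. Define $s:\alpha\to\mathbb{R}/\mathbb{Z}$ by assigning to $(c,a)\in\alpha$ the angle of the $\mathcal{H}'$-parameter ray landing at $(c,a)$; as the restriction of a homeomorphism, $s$ is injective on $\alpha$ and takes uncountably many values.

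Next I would establish that, for $(c,a)\in\alpha$ away from a countable parabolic locus, the two angles $t$ and $s(c,a)$ are linked by a combinatorial identity with only finitely many solutions in $s$ for each given $t$. Pick such $(c,a)\in\alpha$ at which $f_{c,a}$ has no parabolic cycle; by Proposition \ref{bm1}, $-c\in\partial\mathcal{A}_{c,a}$. Approximating from $\mathcal{R}(t)\subset\mathcal{H}$ and using Lemma \ref{cont-ray} as in the proof of Lemma \ref{dyn-para2}, the dynamical internal ray $R^{V_{c,a}}_{c,a}(t)$ lands at $v_{c,a}=f_{c,a}(-c)$. Approximating the same $(c,a)$ from inside $\mathcal{H}'$ along its own parameter ray $\mathcal{R}'(s(c,a))$ and invoking Proposition \ref{dyn-para} for $\mathcal{H}'$, the dynamical internal ray $R^{W}_{c,a}(d_\omega s(c,a))$ lands at $-c$, where $W=U_{c,a}(f_{c,a}^{\ell(\mathcal{H}')}(c))$ is the cycle component containing $-c$ throughout $\mathcal{H}'$. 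Since $f_{c,a}$ is locally $2$-to-$1$ at the critical point $-c$ and maps $W$ onto $V_{c,a}=f_{c,a}(W)$, the induced action on internal angles forces $t$ and $d_\omega s(c,a)$ to satisfy a fixed affine relation modulo $1$ with at most two solutions $s(c,a)$ for each $t$. Hence $s$ takes only finitely many values on a co-countable subset of $\alpha$, contradicting its injectivity on $\alpha$.

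The main obstacle is pinning down the precise combinatorial identity between $t$ and $d_\omega s(c,a)$, which requires tracking the identification $V_{c,a}=f_{c,a}(W)$ uniformly over $\alpha$ and normalizing the internal-angle action of $f_{c,a}$ at its critical point $-c$ (including the correct bookkeeping of the cycle components in the Type-A and Type-B cases). Once this identity is in place, the contradiction with Carath\'eodory's homeomorphism on $\partial\mathcal{H}'$ is immediate and yields $\mathcal{I}(t)=\{\cdot\}$.
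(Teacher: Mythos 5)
Your proposal follows the paper's overall strategy (reduce to showing the Type-$A/B$ parameter-ray angle landing at a point of $\mathcal{I}(t)$ is constrained, via the dynamics, to finitely many values for each $t$, and conclude by a cardinality/connectedness argument), but it has two substantive gaps at exactly the point the paper spends its effort.

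First, the identification $V_{c,a}=f_{c,a}(W)$ is false in general. For a Type-C component, $V_{c,a}$ is the Fatou component carrying the center $\sigma$, which is a preimage of the cycle under $f_{c,a}^{l-1}$; when $l>1$ this component is strictly pre-periodic, whereas $f_{c,a}(W)$ is always a component of the cycle $\mathcal{A}_{c,a}$. The critical value $v_{c,a}$ lies on the boundary of both, but the components are distinct. The paper handles this by pushing forward $L$ iterates until $f^L(V_{c,a})=W$, and the factors $2^M$, $2^N$, $\varepsilon(Y)$ in the identity $\varepsilon(Y)2^M d_\omega\alpha=2^N t\ (\mathrm{mod}\ \mathbb{Z})$ precisely account for the number of passages through $U_{c,a}(c)$ along these orbits. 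Without this bookkeeping, there is no "fixed affine relation."

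Second, and more seriously, you do not actually prove the combinatorial identity; you flag it as the "main obstacle" and treat it as a normalization exercise, but the local 2-to-1 action of $f_{c,a}$ at $-c$ does not by itself force any relation between the two angles, because the two rays in question live in (possibly) different Fatou components and merely share a landing point with $v_{c,a}$. The paper's proof of the identity is a genuine argument: when the iterated rays $R_{c,a}^{W'}(\varepsilon(Y)2^M d_\omega\alpha)$ and $R_{c,a}^{W}(2^N t)$ land at distinct points, one builds a preimage of a graph $\gamma_{c,a}(\theta)$ separating $v_{c,a}$ from $R_{c,a}^{V_{c,a}}(t)$, and then invokes Lemma~\ref{cont-ray} (local stability of rays landing at repelling points) to propagate this separation to nearby parameters on $\mathcal{R}(t)$, contradicting $v_{c',a'}\in R_{c',a'}^{V_{c',a'}}(t)$; while if the two rays land at the same point in distinct components, Remark~\ref{common-l} forces both angles to be $0$. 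Until this separation argument is carried out, the uncountability contradiction cannot be invoked. Your framing via sub-arcs of a Jordan curve and injectivity of the boundary parameterization is a harmless repackaging of the paper's "countable set contains a connected set, so it is a singleton" conclusion, but the core of the lemma remains unproved in the proposal.
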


\begin{proof}   
Choose $(c,a) \in \mathcal{I}(t)\subset  \partial\mathcal{H}_{AB}$ so that $f_{c,a}$ has no parabolic cycle. 
By assumption, there is a hyperbolic component  $\mathcal{H}_\omega$ of Type-$\omega\in\{A,B\}$,  so that $(c,a)\in \partial\mathcal{H}_\omega\cap\partial\mathcal{H}$. Since $\mathcal{H}_\omega$  is a Jordan disk (by Theorem \ref{regularity}), there is a parameter ray, say $\mathcal{R}_\omega(\alpha)$ in $\mathcal{H}_\omega$, landing at 
$(c,a)$.

By Proposition \ref{bm1}, the critical point $-c\in \partial Y$ for some attracting basin $Y\in \mathcal B_{c,a}$. Such $Y\in \mathcal B_{c,a}$ is unique (if not, then by Remark \ref{common-l}, we have $f_{c,a}^p(-c)=-c$, which is impossible). By Proposition \ref{dyn-para}, the internal ray $R_{c,a}^Y(d_\omega\alpha)$ lands at $-c$.
It follows that the internal ray $R_{c,a}^W( \varepsilon (Y)d_\omega\alpha)$ land at the critical value $v_{c,a}=f_{c,a}(-c)\in \partial W$,
where 
$$W=f_{c,a}(Y) \ \text{ and } \  \varepsilon (Y)=1 \text{ if }Y\neq U_{c,a}(c); \ \varepsilon (Y)=2 \text{ if }Y= U_{c,a}(c).$$ 
Let $L\geq0$ be the first integer with $f_{c,a}^L(V_{c,a})=W$, and $W'=f_{c,a}^L(W)$. 
Then   
$$f_{c,a}^L(R_{c,a}^W(\varepsilon (Y)d_{\omega}\alpha))=R_{c,a}^{W'}(\varepsilon (Y)2^M d_\omega\alpha), \  f_{c,a}^L(R_{c,a}^{V_{c,a}}(t))=R_{c,a}^{W}(2^N t),$$
where 
\bess 
&N=\#\{0\leq j<L; f^j_{c,a}(V_{c,a})=U_{c,a}(c)\},& \\
 &M=\#\{0\leq j<L; f^j_{c,a}(W)=U_{c,a}(c)\}.&\eess
It's clear that $N\in\{0,1\}$ and $M< L/p+1$.

In the following, we will show that 
$$\varepsilon (Y)2^M d_\omega\alpha=2^N t  \ {\rm mod} \ \mathbb{Z}.$$

 There are two possibilities:

If $W'\neq W$ and the rays $R_{c,a}^{W'}(\varepsilon (Y)2^M d_\omega\alpha), R_{c,a}^{W}(2^N t)$ land at the same point, then by Remark \ref{common-l}, we have 
$$\varepsilon (Y)2^M d_\omega\alpha=2^N t=0 \ {\rm mod} \ \mathbb{Z}.$$  

If  $R_{c,a}^{W'}(\varepsilon (Y)2^M d_\omega\alpha)$ and $R_{c,a}^{W}(2^N t)$ land at different points (no matter $W=W'$ or not).
Let $\gamma_{c,a}(\theta)$ (see Section \ref{cubic-graph} for definition) be a graph  avoiding the orbit of $-c$ and  so that $\gamma_{c,a}(\theta)\cap J(f_{c,a})$ consists of repelling points.
By Lemma \ref{cont-ray}, there exist an integer $K>0$  and a neighborhood $\mathcal{V}$ of $(c,a)$, with the following three properties:

(a).   $\gamma_{c',a'}(\theta)$ is well-defined and moves continuously when   $(c',a')\in\mathcal{V}$;
 
(b).  $v_{c',a'}=f_{c',a'}(-c')\notin f_{c',a'}^{-K-L}(\gamma_{c',a'}(\theta))$ for all $(c',a')\in\mathcal{V}$;

(c).   when $(c',a')=(c,a)$, the graph $f_{c,a}^{-K}(\gamma_{c,a}(\theta))$ separates the internal rays $R_{c,a}^{W'}(\varepsilon (Y)2^M d_\omega\alpha)$ and $R_{c,a}^{W}(2^N t)$.

Note that $R_{c,a}^{W}(\varepsilon (Y)d_\omega\alpha)$ lands at $v_{c,a}$. By (c),  we see that  $f_{c,a}^{-K-L}(\gamma_{c,a}(\theta))$ separates  $R_{c,a}^{W}(\varepsilon (Y)d_\omega\alpha)\cup\{v_{c,a}\}$ from $R_{c,a}^{V_{c,a}}(t)$.
Then by  properties (a) and (b), we see that
 $f_{c',a'}^{-K-L}(\gamma_{c',a'}(\theta))$ separates $v_{c',a'}$ from the set $R_{c',a'}^{V_{c',a'}}(t)$ for all $(c',a')\in \mathcal{V}
 \cap \mathcal{H}$.  This contradicts the fact that when  
 $(c',a')\in \mathcal{V}
 \cap \mathcal{R}(t)$, the critical value $v_{c',a'}\in R_{c',a'}^{V_{c',a'}}(t)$.

Therefore each  $(c,a)\in \mathcal{I}(t)$ either corresponds to a map $f_{c,a}$ with a parabolic cycle or 
a landing point of the parameter ray $\mathcal{R}_\omega(\alpha)$ of a Type-$\omega\in\{A,B\}$ component $\mathcal{H}_{\omega}$, with
 $\varepsilon (Y)2^M d_\omega\alpha=2^N t  \ {\rm mod} \ \mathbb{Z}$.
 So $\mathcal{I}(t)$ is  a discrete set. The connectedness of $\mathcal{I}(t)$ implies that it is a singleton.
 \end{proof}

\noindent {\it Proof of Theorem \ref{boundary-C}.}
By  Lemmas \ref{imp-h} and \ref{imp-2},   $\partial \mathcal{H}$ is locally connected.  Assume that 
two  parameter rays  $\mathcal{R}(t_1),  \mathcal{R}(t_2)$  with $t_1\neq t_2 \ {\rm mod }\   \mathbb{Z}$, land at the same point $(c,a)\in\partial \mathcal{H}$. Let's look at the dynamical plane of $f_{c,a}$. Note that $\partial V_{c,a}$ is a Jordan curve \cite{RY},
 the internal rays $R_{c,a}^{V_{c,a}}(t_1)$ and $R_{c,a}^{V_{c,a}}(t_2)$ would land at different points. 
  Similar as the proof as  Lemma \ref{imp-2},  let $\gamma_{c,a}(\theta)$ be a graph  avoiding the orbit of $-c$ and  so that $\gamma_{c,a}(\theta)\cap J(f_{c,a})$ consists of repelling points.
By Lemma \ref{cont-ray}, there exist  an integer $q\geq0$  and a neighborhood $\mathcal{V}$ of $(c,a)$, satisfying that

(a).  the graph $f_{c,a}^{-q}(\gamma_{c,a}(\theta))$ separates  $R_{c,a}^{V_{c,a}}(t_1)$ and $R_{c,a}^{V_{c,a}}(t_2)$.

(b).   $\gamma_{c',a'}(\theta)$ is well-defined and moves continuously when   $(c',a')\in\mathcal{V}$;
 
(c).  $-c'\notin f_{c',a'}^{-q}(\gamma_{c',a'}(\theta))$ for all $(c',a')\in\mathcal{V}$;

 The continuity of $f_{c',a'}^{-q}(\gamma_{c',a'}(\theta))$ implies that it would separated
 the sets $R_{c',a'}^{V_{c',a'}}(t_1)$ and $R_{c',a'}^{V_{c',a'}}(t_2)$ (with one possibly bifurcating).
This  implies  that $(c,a)$ can not be the landing point of  the parameter rays $\mathcal{R}(t_1)$ and $\mathcal{R}(t_2)$ simultaneously.   
  This contradicts our assumption.  \hfill\fbox

\end{document}